\renewcommand*{\bibnamedash}{%
	\leavevmode\raise +0.6ex\hbox to 5.5ex{\hrulefill}.\space\space}
\newenvironment{proposition}
{\pushQED{\qed}\propositionx}
{\popQED\endpropositionx}
\newenvironment{propositionp}
{\pushQED{\qed}\propositionx}
{\popQED\endpropositionx}
\newenvironment{theorem}
{\pushQED{\qed}\theoremx}
{\popQED\endtheoremx}
\newenvironment{lemma}
{\pushQED{\qed}\lemmax}
{\popQED\endlemmax}
\theoremstyle{remark}
\newtheorem{remark}{Remark}
\newcommand{\bbB}{\mathbb{B}}
\newcommand{\bbC}{\mathbb{C}}
\newcommand{\bbH}{\mathbb{H}}
\newcommand{\bbN}{\mathbb{N}}
\newcommand{\bbR}{\mathbb{R}}
\newcommand{\bbS}{\mathbb{S}}
\newcommand{\bbT}{\mathbb{T}}
\newcommand{\bbZ}{\mathbb{Z}}
\newcommand{\calF}{\mathcal{F}}
\newcommand{\calJ}{\mathcal{J}}
\newcommand{\calL}{\mathcal{L}}
\newcommand{\calR}{\mathcal{R}}
\newcommand{\calT}{\mathcal{T}}
\newcommand{\scrD}{\mathscr{D}}
\newcommand{\scrS}{\mathscr{S}}
\newcommand{\frakR}{\mathfrak{R}}
\newcommand{\dd}{\,\mathrm{d}}
\title{Long time behavior of the half-wave trace and Weyl remainders}
\author{Ethan Sussman}
\date{March 4th, 2022 (Last Updated), September 20th, 2021 (V1)}
\email{ethanws@mit.edu}
\address{Department of Mathematics, Massachusetts Institute of Technology, Massachusetts 02139-4307, USA}
\subjclass[2020]{11M45, 35P20, 42axx}
\begin{document}

\begin{abstract}
	Given a compact Riemannian manifold $(M,g)$, Chazarain, H\"ormander, Duistermaat, and Guillemin study the half-wave trace $\operatorname{HWT}_{M,g}(\tau) \in \scrS'(\bbR_\tau)$. From the asymptotics of the half-wave trace as $\tau\to 0$, H\"ormander deduces the now standard  remainder $\smash{O(\sigma^{d-1}) = O(\lambda^{d/2-1/2})}$ in Weyl's law, where $d=\dim M$. Given a dynamical assumption implying additional local regularity, Duistermaat and Guillemin improve this to $o(\sigma^{d-1})$. By examining the Tauberian step in the argument, we show how a quantitative version 
	\[
	N(\sigma) = Z(\sigma) + O(\sigma^{d-1}\calR(\sigma)^{-1/2})
	\]
	of the Duistermaat-Guillemin result follows under slightly stronger hypotheses, these implying that the $(d-1)$-fold regularized half-wave trace 
	\[
	\langle D_\tau \rangle^{1-d} \operatorname{HWT}_{M,g}(\tau)
	\]
	is in $\smash{L^{1,1}_\mathrm{loc}(\bbR\backslash \{0\})}$. Here $Z(\sigma)\in \bbR[\sigma]$ is a polynomial and $\calR(\sigma):\bbR^+\to \bbR^+$ is an $(M,g)$-dependent nondecreasing function with $\lim_{\sigma\to\infty} \calR(\sigma)= \infty$, specified in terms of the growth rate of $\langle D_\tau \rangle^{1-d} \tau^{-1}\operatorname{HWT}_{M,g}(\tau)$ as measured in $L^{1,1}$. Per Duistermaat-Guillemin, this hypothesis is implied by geometric conditions that hold ``generically'' for $d\geq 3$. Thus, we clarify the relation between the error term in Weyl's law and the long time behavior of the half-wave trace. 
\end{abstract}

\maketitle

\tableofcontents

\section{Introduction}

In this paper we apply a modification of an argument -- originally used by Newman \cite{Newman} in order to prove the prime number theorem -- to the proof of Weyl-type laws, e.g.\ Weyl's law \cite{Weyl1911}\cite{Levitan}\cite{Avakumovic1956}
\begin{align}
	N(\lambda) &= (2\pi)^{-d} \operatorname{Vol}_g(M) \operatorname{Vol}(\bbB^d) \lambda^{d/2} + O(\lambda^{d/2-1/2}) \label{eq:Weyl_0}\\
	&= (2\pi)^{-d} \operatorname{Vol}_g(M) \operatorname{Vol}(\bbB^d) \Sigma^{d} + O(\Sigma^{d-1}),
	\label{eq:Weyl_1}
\end{align}
where the function $N: [0,\infty)\to \bbN$, $N(\lambda) = \#\{n : \lambda_n \leq \lambda\}$ is the counting function of the eigenvalues $\lambda_0,\lambda_1,\lambda_2,\ldots$ of the Laplace-Beltrami operator on a compact Riemannian manifold $(M,g)$, without boundary.   
(Going forward, we will work with $\Sigma = \lambda^{1/2}$ instead of $\lambda$, following the notational conventions of \cite{DG}.) 
One goal is to estimate, under appropriate geometric or analytic assumptions, the remainder $N(\lambda) - (2\pi)^{-d} \operatorname{Vol}_g(M) \operatorname{Vol}(\bbB^d) \lambda^{d/2}$. 
For recent work, along with a more extensive bibliography, see \cite{Canzani2019EigenfunctionCV}\cite{JG}\cite{JG2}. The standard estimate, $O(\lambda^{d/2-1/2})$, as written in \cref{eq:Weyl_1}, is sharp when $(M,g)$ is a Zoll manifold -- e.g. the $d$-sphere, $\bbS^d$ -- so any improvement must be contingent on some geometric or analytic assumptions.

One method for estimating the remainder involves using a Tauberian theorem for the Laplace transform in $\lambda$. The standard version of this argument yields only an $o(\lambda^{d/2})$ remainder in \cref{eq:Weyl_0}.
Newman, in his proof \cite{Newman}\cite{Korevaar82}\cite{Zagier}\cite{KTextbook} of Ingham's Tauberian theorem, assumed what amounted to continuity of the Fourier-Laplace transform of the function $\alpha$ under investigation all the way up to and including the imaginary axis (sans the origin). See the expositions by Zagier \cite{Zagier} and Korevaar \cite{Korevaar82}\cite{KTextbook} for this perspective, which differs slightly from Newman's. 

In a series of papers,  H\"ormander \cite{Hormander68} (inspired by \cite{Levitan}\cite{Avakumovic1956}), Chazarain \cite{Chazarain1974}, and Duistermaat \& Guillemin \cite{DG} established, under geometric hypotheses, the precise regularity of the half-wave trace $-i\tau \calF N_{1/2}(\tau)$ along the whole real axis. Here, $N_{1/2}(\Sigma) = N(\Sigma^2)$. 
In particular, under an appropriate genericity assumption, the worst singularity of the half-wave trace is at the origin. When fed into Newman's Tauberian argument, this yields a generic improvement of \cref{eq:Weyl_1} in which the remainder is smaller than that in \cref{eq:Weyl_1} by an $o(1)$ factor, as originally shown using a different argument in \cite[Theorem 3.5]{DG}. When supplemented with a bound on the growth of the half-wave trace as measured in an appropriate Sobolev space (a Sobolev space in which the half-wave trace of a Zoll manifold cannot lie, even locally), we can replace Duistermaat-Guillemin's $o(1)$ with a function specified in terms of the growth rate of the half-wave trace. Demonstrating this fact is the goal of the present paper. 

As exemplified by \cite{Canzani2019EigenfunctionCV}\cite{JG}\cite{JG2} and the references there, explicit quantitative improvements on Duistermaat-Guillemin's result follow from generic dynamical assumptions. The state of the art uses sophisticated microlocal and semiclassical tools. Our goal here is merely to clarify, using nothing more than a simple Tauberian argument, how improved estimates are encoded in the structure of the half-wave trace. 

We record here the main Tauberian theorem. It is a version of Ingham's Tauberian theorem with remainder, a more general version of which is stated in \Cref{thm:Ingham}. 
\begin{theorem}
	\label{thm:main_1}
	Suppose that $N: \bbR^{\geq 0} \to (0,\infty)$ is a piecewise-continuous, nondecreasing function, $Z(\sigma)\in \bbC[\sigma]$ is a polynomial in $\sigma$ of degree $d=\deg Z$ and $E\in C_{\mathrm{c}}^\infty((0,\infty))$. 
	Let 
	\begin{equation} 
		\alpha(\sigma) = \begin{cases}
			\langle \sigma\rangle^{1-\deg Z} (N(\sigma)-Z(\sigma)+E(\sigma)) & (\sigma\geq 0) \\
			0 & (\sigma< 0),
		\end{cases}
	\end{equation}
	and suppose that the singular support of $\calF N$ and hence $a= \calF\alpha$ is discrete. Suppose further that 
	\begin{enumerate}
		\item $\alpha \in  L^\infty(\bbR)$, 
		\label{it:taub}
		\item $a(\tau)$ is in $\tau L^{1,\ell}_{\mathrm{loc}}(\bbR_\tau)$ for some integer $\ell\geq 1$.
		\label{it:ab}
	\end{enumerate}
	Then, for $\calR_0(\Sigma) = \max\{1,\sup\{T>0: T \lVert t^{-1} a(t) \rVert_{L^{1,\ell}[-T,+T]} \leq \Sigma^\ell \}\}$, 
	\begin{equation} 
		N(\Sigma) = Z(\Sigma)  + O( \Sigma^{d-1} \calR_0(\Sigma)^{-1/2})
	\end{equation}  
	as $\Sigma\to\infty$.  
\end{theorem}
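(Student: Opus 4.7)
My plan is to combine a monotonicity-based Tauberian sandwich for $N$ with a Fourier-side estimate for the smoothed $\alpha$, exploiting the assumed Sobolev regularity of $f := a/\tau$, so that the characteristic $\calR_0(\Sigma)^{-1/2}$ factor emerges from optimally balancing the sandwich scale $h$ with the Fourier cutoff scale $T$. The first step is to establish a Tauberian sandwich: for $\rho \in C_c^\infty([0, h])$ and $\tilde\rho \in C_c^\infty([-h, 0])$, both nonnegative with unit integral, monotonicity of $N$ gives $(N * \rho)(\Sigma) \leq N(\Sigma) \leq (N * \tilde\rho)(\Sigma)$. Substituting $N = Z + \langle\cdot\rangle^{d-1}\alpha$ (valid for $\Sigma$ large enough that $E$ vanishes on $[\Sigma - h, \Sigma + h]$), expanding $\langle\sigma - u\rangle^{1-d} = \langle\sigma\rangle^{1-d}(1 + O(|u|/\Sigma))$, and absorbing $O(h\Sigma^{d-1})$ errors from the Taylor expansion of $Z$ gives, after dividing by $\langle\Sigma\rangle^{d-1}$, the bounds
\[
(\alpha * \rho)(\Sigma) - Ch \leq \alpha(\Sigma) \leq (\alpha * \tilde\rho)(\Sigma) + Ch.
\]

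Next, I would bound $(\alpha * \rho)(\Sigma)$ and $(\alpha * \tilde\rho)(\Sigma)$ via the Fourier representation $(2\pi)^{-1}\int a(\tau) \hat\rho(\tau) e^{i\tau\Sigma}\,d\tau$, which is convergent distributionally since $\hat\rho \in \calS$. A smooth Fourier cutoff $\chi_T \in C_c^\infty([-2T, 2T])$ equal to $1$ on $[-T, T]$ splits the integral into near ($|\tau| \leq 2T$) and far parts. On the near part, writing $a = \tau f$ and integrating by parts $\ell$ times in $\tau$ (no boundary terms, thanks to the cutoff), using $|\partial_\tau^k \chi_T| \leq CT^{-k}$ and the Leibniz rule yields the bound $C\Sigma^{-\ell}T\|f\|_{L^{1, \ell}[-2T, 2T]}$. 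On the far part, one recasts as $\alpha * (\rho - \rho * \check\chi_T)(\Sigma)$ with $\check\chi_T = \calF^{-1}\chi_T \in \calS$, bounding via $\|\alpha\|_\infty$ and the $L^1$-smallness of $\rho - \rho * \check\chi_T$, which decays rapidly in $hT$ by Taylor expansion since $\rho \in C_c^\infty$.

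Setting $T = \calR_0(\Sigma)^{1/2}$ and $h \asymp \calR_0(\Sigma)^{-1/2}$, the defining inequality $T' \|f\|_{L^{1, \ell}[-T', T']} \leq \Sigma^\ell$ at $T' = \calR_0(\Sigma)$ combined with monotonicity of the Sobolev norm over the shrunken interval gives $T \|f\|_{L^{1, \ell}[-2T, 2T]} \leq 2\Sigma^\ell/\calR_0(\Sigma)^{1/2}$; hence the near contribution is $O(\calR_0(\Sigma)^{-1/2})$, matching the sandwich error $O(h)$. Summing yields $|\alpha(\Sigma)| = O(\calR_0(\Sigma)^{-1/2})$, equivalent to the claimed $N(\Sigma) - Z(\Sigma) = O(\Sigma^{d-1}\calR_0(\Sigma)^{-1/2})$ for $\Sigma$ large, with the discrete singular support hypothesis on $\calF N$ entering to guarantee that $f$ is genuinely a function (indeed in $L^{1, \ell}_{\mathrm{loc}}$) so that the integration-by-parts step has meaning.

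The main obstacle I anticipate is the far-field Fourier estimate at the natural scaling $hT \sim 1$, where the Taylor-based $L^1$-difference bound $\|\rho - \rho * \check\chi_T\|_{L^1} = O_k((hT)^{-k})$ fails to provide decay in $\calR_0$. Resolving this cleanly to obtain the sharp $\calR_0^{-1/2}$ rate without logarithmic loss requires either a more delicate mollifier choice (for instance using a Fejér-type kernel whose Fourier transform is compactly supported, at the cost of only an approximate sandwich), or a direct contour-integral argument in the style of Newman to bypass explicit tail estimation altogether.
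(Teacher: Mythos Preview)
Your approach is genuinely different from the paper's, and you have correctly identified the obstruction that prevents it from closing as written.

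The paper does \emph{not} sandwich $\alpha$ directly with mollifiers. Instead it proceeds in two stages. First (Theorem~\ref{thm:Ingham} and its ingredients, Propositions~\ref{prop:setup}--\ref{prop:setup2}) it estimates the \emph{integrated} quantity $A_\Sigma=\int_0^\Sigma\alpha$ via Newman's contour integral: one writes $A_\Sigma$ as a Cauchy integral of the Fourier--Laplace transform $C(t)$ over a semicircular contour of radius $T$ in $\{\Re t\ge 0\}$, with the kernel $t^{-1}(1+t^2/T^2)^M e^{t\Sigma}$. The factor $(1+t^2/T^2)^M$ vanishes at the corners $\pm iT$, which is what lets the ``far field'' (your terminology) be handled purely by the $L^\infty$ bound on $\alpha$ through the decay of $C(t)-C_\Sigma(t)$ in $\Re t>0$ (Proposition~\ref{prop:setup_base}); no tail estimate on a real-line Fourier integral is ever needed. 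The portion on the imaginary axis is then controlled by integration by parts against $\tau^{-1}a\in L^{1,\ell}$ (Proposition~\ref{prop:setup2}). This yields $A_\Sigma=\mathrm{const}+O(\calR_0(\Sigma)^{-1})$. Second, a mean-to-max argument (Proposition~\ref{prop:step_two}) converts the rate $\calR_0^{-1}$ on $A_\Sigma$ into the rate $\calR_0^{-1/2}$ on $N-Z$: if $|N(\sigma)-Z(\sigma)|$ is large, monotonicity forces the integrand of $A_\Sigma$ to keep a definite sign over an interval whose length is proportional to $|N(\sigma)-Z(\sigma)|$, producing a contribution to $A_\Sigma-A_\sigma$ that is \emph{quadratic} in $|N(\sigma)-Z(\sigma)|$ (see \eqref{eq:misc_812}). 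This is where the square root appears.

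Your gap is exactly the one you flag: at the balanced scaling $h\asymp T^{-1}\asymp \calR_0^{-1/2}$, the far-field bound $\lVert \rho-\rho*\check\chi_T\rVert_{L^1}=O((hT)^{-k})$ gives $O(1)$, not $O(\calR_0^{-1/2})$. A mollifier with compactly supported Fourier transform kills the far field but cannot be nonnegative and one-sided in $\sigma$, so the sandwich becomes approximate and the Schwartz tails interact with the polynomial growth of $N$ in a way you would have to control separately (this is doable, e.g.\ via Beurling--Selberg type constructions, but is real work). The paper's contour method is precisely the device that sidesteps this: the role of your far field is played by the integrals $I_1,I_2$ over the right semicircle, where the exponential damping $e^{-\sigma\Re t}$ together with $\alpha\in L^\infty$ gives the required $O(T^{-1})$ without any appeal to the regularity of $a$.
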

And the main application to Weyl's law: 
\begin{theorem}
	\label{thm:main}
	Suppose that $(M,g)$ is a compact Riemannian manifold such that 
	\begin{equation} 
	\tau^{-1} \langle D_\tau \rangle^{1-d} \tau^{-1} \operatorname{HWT}_{M,g}(\tau) \in L_{\mathrm{loc}}^{1,\ell}(\bbR\backslash \{0\})
	\end{equation} 
	for some $\ell\in \bbN^+$. 
	Then, for some polynomial $Z_0(\sigma) \in \bbC[\sigma]$ of degree at most $d-2$, 
	\begin{equation}
		N(\lambda) = (2\pi)^{-d} \operatorname{Vol}_g(M) \operatorname{Vol}(\bbB^d) \Sigma^d + Z_0(\Sigma)  + O(\Sigma^{d-1} \calR(\Sigma)^{-1/2} )
	\end{equation}
	as $\Sigma\to\infty$, where $\calR(\Sigma) = \max\{1,\sup\{T>0: T \lVert \tau^{-1} \langle D_\tau \rangle^{1-d} (\tau^{-1} \operatorname{HWT}_{M,g}(\tau)) \rVert_{L^{1,\ell}[1,T]_\tau} \leq \Sigma^\ell \}\}$. 
\end{theorem}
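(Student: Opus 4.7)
The plan is to apply \Cref{thm:main_1} with $N(\sigma) := N_{1/2}(\sigma) = N(\sigma^2)$, the counting function for the eigenvalues $\Sigma_n = \sqrt{\lambda_n}$ of $\sqrt{-\Delta_g}$. The polynomial $Z(\Sigma) = (2\pi)^{-d}\operatorname{Vol}_g(M)\operatorname{Vol}(\bbB^d)\Sigma^d + Z_0(\Sigma)$ with $Z_0 \in \bbC[\sigma]$ of degree at most $d-2$ is the polynomial produced by the Chazarain-H\"ormander conormal expansion of $\operatorname{HWT}_{M,g}$ at $\tau=0$; the absence of a $\Sigma^{d-1}$ term is the classical vanishing for closed Riemannian manifolds. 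The cutoff $E \in C_c^\infty((0,\infty))$ is chosen to absorb any finite-$\sigma$ discrepancy left by the $\mathbf{1}_{\sigma\geq 0}$ restriction of $Z$.

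Hypothesis \ref{it:taub} of \Cref{thm:main_1}, namely $\alpha \in L^\infty(\bbR)$, is then the standard H\"ormander-Avakumovi\'c sharp Weyl bound $N_{1/2}(\Sigma) - Z(\Sigma) = O(\Sigma^{d-1})$. The discreteness of the singular support of $\calF N_{1/2}$ follows from the stated convention $\operatorname{HWT}_{M,g}(\tau) = -i\tau\calF N_{1/2}(\tau)$ together with the classical fact that $\operatorname{sing\,supp}\operatorname{HWT}_{M,g} \subseteq \{0\}\cup\{\pm\tau_k\}$, where $\{\tau_k\}$ is the (discrete) set of lengths of closed geodesics.

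Hypothesis \ref{it:ab} is where the assumption of \Cref{thm:main} is used. Write
\begin{equation*}
a(\tau) = \calF\alpha(\tau) = \langle D_\tau\rangle^{1-d}\calF\bigl((N_{1/2} - Z + E)\mathbf{1}_{\sigma\geq 0}\bigr)(\tau).
\end{equation*}
On $\bbR\setminus\{0\}$, the Fourier transforms of $Z\mathbf{1}_{\sigma\geq 0}$ and $E$ are smooth and $\calF N_{1/2}(\tau)$ agrees, up to a nonzero constant, with $\tau^{-1}\operatorname{HWT}_{M,g}(\tau)$. Hence, modulo smooth remainders,
\begin{equation*}
\tau^{-1} a(\tau) = c\,\tau^{-1}\langle D_\tau\rangle^{1-d}\bigl(\tau^{-1}\operatorname{HWT}_{M,g}(\tau)\bigr) \quad \text{on } \bbR\setminus\{0\}
\end{equation*}
for some constant $c$, and the hypothesis of \Cref{thm:main} places the right-hand side in $L^{1,\ell}_{\mathrm{loc}}(\bbR\setminus\{0\})$. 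Near the origin, the $L^\infty$ bound on $\alpha$ combined with the discrete singular support forces $a$ to be sufficiently regular for $\tau^{-1}a$ to be locally in $L^{1,\ell}$, which is exactly what the choice of $Z$ and $E$ was engineered to ensure.

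Finally, the conjugation symmetry $\overline{\operatorname{HWT}_{M,g}(\tau)} = \operatorname{HWT}_{M,g}(-\tau)$ lets us replace the $L^{1,\ell}[-T,T]$ norm appearing in $\calR_0(\Sigma)$ by a constant multiple of the $L^{1,\ell}[1,T]$ norm, with the bounded $[-1,1]$ contribution absorbed into $\max\{1,\cdot\}$. Hence $\calR_0(\Sigma)\asymp\calR(\Sigma)$ as $\Sigma\to\infty$, and \Cref{thm:main_1} delivers the desired estimate $N(\Sigma) = Z(\Sigma) + O(\Sigma^{d-1}\calR(\Sigma)^{-1/2})$. The main obstacle throughout is identifying $Z$ and $E$ so that the $\tau = 0$ singularity of $\calF N_{1/2}$ is cancelled precisely: this requires the full Chazarain-H\"ormander expansion, since the hypothesis of \Cref{thm:main} only controls the regularized half-wave trace on $\bbR\setminus\{0\}$, whereas \Cref{thm:main_1} demands global control on $\bbR$.
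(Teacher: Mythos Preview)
Your proposal is correct and follows essentially the same route as the paper: choose $Z$ (via the Chazarain--H\"ormander expansion at $\tau=0$, with the subleading term absent) and $E$ so that $a=\calF\alpha$ is smooth and vanishes to first order at the origin, verify the hypotheses of \Cref{thm:main_1} using the standard sharp Weyl bound and the assumed $L^{1,\ell}_{\mathrm{loc}}$ regularity away from $0$, and then use the conjugation symmetry of $\operatorname{HWT}_{M,g}$ to pass from $\calR_0$ to $\calR$. Two small points of caution: the sentence ``the $L^\infty$ bound on $\alpha$ combined with the discrete singular support forces $a$ to be sufficiently regular'' is misleading as written---those two facts alone do not give $a\in\tau L^{1,\ell}_{\mathrm{loc}}$ near $0$; it is really the explicit cancellation of the pole by $Z$ (and of the constant term by $E$) that does the work, as you correctly note in your final paragraph. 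Second, when comparing $\calR_0$ and $\calR$, the bounded contributions from $[-1,1]$ and from the Schwartz term $\langle D_\tau\rangle^{1-d}\calF E$ are absorbed not merely by the $\max\{1,\cdot\}$ but by the fact that the $L^{1,\ell}[1,T]$ norm of the regularized half-wave trace is eventually nonzero and nondecreasing in $T$; the paper makes this explicit by introducing an auxiliary parameter $\Lambda$ in $\calR_{0,\Lambda}$.
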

Note that $\calR(\Sigma)$ satisfies $\lim_{\Sigma\to\infty} \calR(\Sigma)=\infty$, so $O(\Sigma^{d-1} \calR(\Sigma)^{-1/2} ) = o(\Sigma^{d-1})$. 

Combined with \Cref{prop:apriori}, \Cref{thm:main} yields the following (somewhat loosely stated) theorem:
\begin{theorem}
	\label{thm:final}
	Suppose that $(M,g)$ is a nonpathological $d$-dimensional compact Riemannian manifold (in the sense of satisfying the hypotheses of \cite[Theorem 4.5]{DG} for all $T$) whose nontrivial geodesic loops are at most $\delta$-fold degenerate, $\delta \in \{0,\ldots,2d-1\}$. 
	
	Consider $\ell \in \{1,\ldots, \lceil d-\delta/2-1/2\rceil -1\}$. 
	
	Then, $\langle D_\tau \rangle^{1-d} (\tau^{-1} \operatorname{HWT}_{M,g}(\tau)) \in L^{1,\ell}_{\mathrm{loc}}(\bbR\backslash \{0\})$, and for some polynomial $Z_0(\sigma) \in \bbC[\sigma]$ of degree at most $d-2$, 
	\begin{equation}
	N (\lambda) = (2\pi)^{-d} \operatorname{Vol}_g(M) \operatorname{Vol}(\bbB^d) \Sigma^d + Z_0(\Sigma)  + O(\Sigma^{d-1} \calR(\Sigma)^{-1/2} )
	\end{equation}
	as $\Sigma\to\infty$, where $\calR(\Sigma) = \sup\{1,T>0: T \lVert \tau^{-1} \langle D_\tau \rangle^{1-d} (\tau^{-1} \operatorname{HWT}_{M,g}(\tau)) \rVert_{L^{1,\ell}[1,T]_\tau} \leq \Sigma^\ell \}$. 
\end{theorem}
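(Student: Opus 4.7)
The plan is to derive \Cref{thm:final} as an essentially immediate combination of two earlier results: \Cref{prop:apriori}, which provides the required $L^{1,\ell}_{\mathrm{loc}}$ regularity of the regularized half-wave trace, and \Cref{thm:main}, the Tauberian apparatus of the paper. The substantive analytic content is packaged in those two ingredients; what remains is the bookkeeping needed to glue them together.

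First, I invoke \Cref{prop:apriori} with the given value of $\delta$ to conclude that
\[
\langle D_\tau\rangle^{1-d}(\tau^{-1}\operatorname{HWT}_{M,g}(\tau)) \in L^{1,\ell}_{\mathrm{loc}}(\bbR\backslash\{0\})
\]
for every $\ell \in \{1,\ldots,\lceil d-\delta/2-1/2\rceil-1\}$. Conceptually, this is the conjunction of Duistermaat-Guillemin's classification of conormal singularities of $\operatorname{HWT}_{M,g}$ at each closed-geodesic period with a straightforward count: a $\delta$-fold degenerate loop of period $T$ produces a conormal singularity at $\tau = T$ whose order is bounded in terms of $(d,\delta)$, and applying the regularizer $\langle D_\tau\rangle^{1-d}$ raises the local regularity by $d-1$, yielding $L^{1,\ell}_{\mathrm{loc}}$ precisely up to the cutoff $\ell < d - \delta/2 - 1/2$. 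Multiplying by the further factor $\tau^{-1}$ preserves this membership away from the origin, since $\tau^{-1} \in C^\infty(\bbR\backslash\{0\})$. This verifies both the local integrability assertion of \Cref{thm:final} and the hypothesis of \Cref{thm:main}.

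Second, I apply \Cref{thm:main} to $N(\lambda)$, the Weyl counting function of $(M,g)$, taking for $Z$ the degree-$d$ polynomial supplied by the short-time Hadamard expansion of the half-wave trace. Its leading coefficient is $(2\pi)^{-d}\operatorname{Vol}_g(M)\operatorname{Vol}(\bbB^d)$, and the lower-order terms fold into $Z_0(\Sigma)$ of degree at most $d-2$. The conclusion is exactly the claimed asymptotic, with the function $\calR(\Sigma)$ produced by \Cref{thm:main} matching the one specified in the statement (the additional $\tau^{-1}$ inside the norm defining $\calR$ simply corresponds to the shift between $\alpha$ and $a=\calF\alpha$ in the hypotheses of \Cref{thm:main}).

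The principal obstacle lies inside the proof of \Cref{prop:apriori}, which I treat as a black box: its verification requires uniform control of conormal orders across the potentially accumulating closed-geodesic periods within any compact time window, and this is precisely what the nonpathology hypothesis of \cite[Theorem 4.5]{DG} supplies. Once \Cref{prop:apriori} is in hand, the deduction of \Cref{thm:final} reduces to citation and substitution, with no further analytic input required.
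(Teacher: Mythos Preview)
Your proposal is correct and matches the paper's own argument essentially verbatim: the paper's proof of \Cref{thm:final} is the single sentence ``With \Cref{prop:apriori}, this yields \Cref{thm:final},'' appearing immediately after the restatement and proof of \Cref{thm:main} as the final proposition of \S\ref{sec:final}. Your two-step outline (invoke \Cref{prop:apriori} for the $L^{1,\ell}_{\mathrm{loc}}$ regularity, then feed it into \Cref{thm:main}) is exactly this, and your observation that the extra $\tau^{-1}$ is harmless away from the origin because $\tau^{-1}\in C^\infty(\bbR\backslash\{0\})$ is the only bookkeeping needed to reconcile the hypothesis of \Cref{thm:main} with the conclusion of \Cref{prop:apriori}.

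One small inaccuracy in your commentary: under the nonpathology hypothesis the closed-geodesic periods do \emph{not} accumulate in any compact window---$\calT$ is discrete without accumulation points---so the issue in \Cref{prop:apriori} is not accumulation but rather the order of the conormal singularity at each individual period, which is exactly what the $\delta$-fold degeneracy bound controls.
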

\begin{remark}
	Of course, $Z_0$ can be computed in terms of the Laurent series of $\operatorname{HWT}_{M,g}(\tau)$ at $\tau = 0$, though we will not do so explicitly below.  
\end{remark}
\begin{remark}
	Roughly speaking, $\nu=2d-1-\delta$ is the codimension of the subset of the sphere bundle consisting of generators of periodic bicharacteristics. In order to apply the previous theorem, we need $\nu  \geq 3$. 
	Since the Duistermaat-Guillemin improvement \cite[Theorem 3.5]{DG} of Weyl's law applies (loosely speaking) when the set of generators of periodic bicharacteristics has positive codimension $\nu\geq 1$, \Cref{thm:main} applies in slightly less generality. However, we expect that a version of \Cref{thm:main} does apply to all Riemannian manifolds to which the Duistermaat-Guillemin theorem applies, the restriction above being more born out of a desire to avoid fractional regularity $L^1$-based function spaces than any apparent need. 
	
	See \S\ref{sec:final} for a more precise discussion.
\end{remark}
Debruyne \cite[\S6]{Debruyne} has recently proven results with a similar flavor to \Cref{thm:main_1}.

We point out the following two special cases of \Cref{thm:main}: 
\begin{itemize}
	\item a polynomial bound on the half-wave trace (as measured in an appropriate Sobolev space), e.g.\ $\langle D_\tau \rangle^{1-d} \tau^{-1} \operatorname{HWT}_{M,g}(\tau) \in L_{\mathrm{loc}}^{1,\ell}(\bbR\backslash \{0\}) \cap \langle \tau \rangle^k L^{1,\ell}[1,\infty)_\tau $ for $\ell\in \bbN$ and $k\geq 1$, yields a polynomial improvement of Weyl's law: 
	\begin{equation}
		N(\lambda) = (2\pi)^{-d} \operatorname{Vol}_g(M) \operatorname{Vol}(\bbB^d) \Sigma^d + O(\Sigma^{d - 1 - \epsilon})
		\label{eq:Weyl_2}
	\end{equation}
	for $\epsilon= \ell/2k>0$. Indeed, in this case, $\calR(\Sigma) = \Omega( \Sigma^{\ell/k})$. 
	
	For instance, the half-wave trace on the $d$-torus $\bbT^d=\bbR^d/\Lambda$, $\Lambda$ a full rank lattice, grows polynomially in the relevant sense (as can be proven via the method of images) (although actually we should regularize the half-wave trace by an extra $\langle D \rangle^{-\epsilon}$ for arbitrarily small $\epsilon>0$ before carrying out the argument below in order to get a sharp \emph{integral} amount of Sobolev regularity).
	\item On the other hand, if $\langle D_\tau \rangle^{1-d} \tau^{-1} \operatorname{HWT}_{M,g}(\tau) \in L_{\mathrm{loc}}^{1,\ell}(\bbR\backslash \{0\}) \cap e^{c\tau^\beta} L^{1,\ell}[1,\infty)_\tau $ for some $c>0$ and $\beta>0$, then we have 
	\begin{equation}
		N(\lambda) = (2\pi)^{-d} \operatorname{Vol}_g(M) \operatorname{Vol}(\bbB^d) \Sigma^d + O(\Sigma^{d - 1} / \log^{1/2\beta} \Sigma ).
	\end{equation}
	Indeed, $\calR(\Sigma) =  \Omega( \log^{1/\beta} \Sigma )$ in this case.
\end{itemize}
Note that we do not in the present paper prove any bounds on the half-wave trace.

We now outline the proof of \Cref{thm:main_1}. While the proof is not long (being modeled on Newman's short proof of Ingham's Tauberian theorem), we believe the central idea is worth presenting once at a low level of detail (and with a somewhat philosophical emphasis). There exist many variants of Ingham's Tauberian theorem and correspondingly many proofs. Newman's -- especially as presented by  \cite{Korevaar82}\cite{Zagier} -- makes manifest use of the behavior of the Fourier-Laplace transform along the imaginary axis, which is in a precise sense (\Cref{lem:bdy}) the Fourier transform. While this hypothesis is unnecessarily strong  for proving Ingham's Tauberian theorem, it is precisely what is needed here to make use of the rich structure of the half-wave trace revealed by \cite{Chazarain1974}\cite{Hormander68}\cite{DG}.  

Applied to the Laplace transform, the idea behind Newman's proof of Ingham's theorem is as follows: given a nondecreasing (typically piecewise-constant) function $N(\sigma) \in \smash{\langle \sigma \rangle^{K} L^\infty(\bbR^{\geq 0}_\sigma)}$, for some ${K} \in \bbR$, and given an ansatz $Z_0(\sigma) \in \langle \sigma \rangle^{K} L^\infty(\bbR^{\geq 0}_\sigma)$ (typically a polynomial, possibly corrected by an element of $C_{\mathrm{c}}^\infty((0,\infty))$) for $N$, attempt to determine the asymptotics of the improper integral
\begin{equation}
	A_\Sigma = \int_0^\Sigma \frac{1}{\langle \sigma \rangle^J} (N(\sigma) - Z_0(\sigma)) \dd \sigma 
	\label{eq:improper}
\end{equation}
as $\Sigma\to\infty$, 
for appropriate $J\in \bbR$. (Here $\langle \sigma \rangle = (1+\sigma^2)^{1/2}$ is the ``Japanese bracket.'') If $Z_0(\sigma)$ is a good polynomial ansatz (and the Fourier transform of the integrand is sufficiently regular around zero), then the integrand $\langle \sigma \rangle^{-J} (N(\sigma)-Z(\sigma))$ should be oscillating, and the integral of an oscillating function is typically smaller than we would expect via computing an $L^1$-norm. 
Consequently, in order to understand the $\Sigma\to\infty$ asymptotics of $A_\Sigma$, we should expect to have to understand the amplitude of these oscillations -- that is, understand the Fourier transform away from zero. 
In typical applications, ${K}\in \bbN$, with the general case introducing only notational complications, and we take $Z_0$ to be a polynomial of degree ${K}$ outside of some neighborhood of the origin. When estimating the remainder in Weyl's law, $Z_0(\sigma)$ can be taken to be
\begin{equation} 
	Z_0(\sigma)=\Theta(\sigma) Z(\sigma)+E(\sigma)
\end{equation}  
for $Z \in \bbR[\sigma]$ of degree $d$, $E \in C_{\mathrm{c}}^\infty(\bbR^+)$ (or perhaps Schwartz). 
Here $\Theta:\bbR\to \{0,1\}$ is the Heaviside step function, $\Theta(\sigma)=(1+\operatorname{sign} \sigma)/2$.
In this case, for all $J\geq {K}$ sufficiently large, $A_\Sigma$ will converge as $\Sigma\to\infty$, at some polynomial rate which can be estimated. 
(And $E$ can be chosen so as to make $\lim_{\Sigma \to \infty} A_\Sigma = 0$, if so desired.)
Suppose that $J$ is in fact sufficiently large, leaving aside for the moment the question of what precisely this means (which will, of course, depend on the specifics).
Since $N$ is nondecreasing, if for some $\sigma_0>0$ the quantity $N(\sigma_0)$ differs too much from its leading order asymptotic, as captured by the ansatz $Z(\sigma_0)$, then the integrand above will be comparably large over an interval of polynomial length; this will result in a relatively large contribution to the tail of the improper integral (\ref{eq:improper}). If this event were to occur for infinitely many, arbitrarily large $\sigma_0 $, then this sets a  bound for the rate of convergence of $A_\Sigma$ as $\Sigma\to\infty$. If this bound contradicts the known rate of convergence, then we can conclude that the difference between $N(\sigma)$ and $Z(\sigma)$ is smaller than supposed, at least for sufficiently large $\sigma$. See \S\ref{sec:second}, where this argument (a basic ``mean-to-max'' argument) is made precise. 

The crux of the argument, then, is estimating the quantity $A_\Sigma$ defined by \cref{eq:improper} to a sufficiently high degree of accuracy. 
In Newman's Tauberian approach, this is done by comparing $A_\Sigma$ with 
\begin{equation}
	A_\Sigma(t) = \int_0^\Sigma   \frac{e^{-\sigma t}}{\langle \sigma\rangle ^J} (N(\sigma) - Z_0(\sigma)) \dd \sigma, \quad  A(t) = \int_0^\infty   \frac{e^{-\sigma t}}{\langle \sigma \rangle^J} (N(\sigma) - Z_0(\sigma)) \dd \sigma, 
\end{equation}
defined initially for $t>0$. 
As $t\to 0^+$ for fixed $\Sigma>0$, $A_\Sigma(t) \to A_\Sigma$ (indeed, $A_\Sigma(t)$ is entire in $t$), and as $\Sigma\to \infty$ for fixed $t>0$, $A_\Sigma(t) \to A(t)$. So, by taking $t\to 0^+$ and $\Sigma\to\infty$ together at some appropriate relative rate, and by estimating the closeness of $A_\Sigma(t),A(t),A_\Sigma$, we can estimate $A_\Sigma$. 
If $Z$ is chosen correctly, then $A(t)$ (in the applications at hand) will actually be continuous all the way down to $t=0$, so it makes sense to first take $t\to 0^+$ and then take $\Sigma\to\infty$. 
Estimating the closeness of $A_\Sigma(0)$ and $A(0)$ is accomplished by means of a complex analytic argument whose input is the  regularity of the Fourier transform $\calF \alpha$ of 
\begin{equation} 
	\alpha(\sigma) = 
	\begin{cases}
		\langle \sigma \rangle^{-J}(N(\sigma) - Z_0(\sigma)) & (\sigma>0) \\
		0 & (\sigma \leq 0)
	\end{cases}
\end{equation} 
in the interval $[-T,+T]$, for each $T>0$. (We will mainly take $J=d-1$, but modifications of the arguments below can deal with other $J$.) Indeed, $A(t)$ is the Laplace transform of $\langle\sigma\rangle^{-J} (N(\sigma) - Z_0(\sigma))$, evaluated at $t$.  Hence, $A(t)$ analytically continues to $t \in \mathbb{H}_{\mathrm{R}}$, $\bbH_{\mathrm{R}}\subset \bbC$ the right half of the complex plane. This is the Fourier-Laplace transform, and the Fourier transform is its distributional boundary value at the imaginary axis $i\bbR_\tau \subset \bbC_t$. See \S\ref{sec:primer}. Trivially, $L^1$-based Sobolev regularity of $\calF \alpha$ implies $L^\infty$-based decay of $\alpha$  -- indeed, it implies even better decay by the Riemann-Lebesgue lemma -- but Newman's argument is stronger, precisely because it works even if $\calF\alpha$ fails to decay, and this is definitely necessary for an application to Weyl's law. In other words, Newman's argument works even if $\calF \alpha$ only has local $L^1$-based Sobolev regularity. If a tempered distribution such as $a$ fails to decay, this might signal the irregularity of the distribution's Fourier transform $\alpha$, but even if the half-wave trace has rapid growth we already know that $\alpha$ is a function. 
This fact allows us to make up for the lack of decay by estimating (when $A \in \tau L^1[-\epsilon,+\epsilon]_\tau$ for some $\epsilon>0$) the closeness of the full Fourier transform $\calF_{\tau \to \Sigma}^{-1}(\tau^{-1} A(i \tau))(\Sigma)$ and the partial Fourier transform 
\begin{equation}
	\frac{1}{2\pi} \int_{-T}^{+T} e^{i \tau \Sigma}   A(i \tau) \frac{\dd \tau}{\tau}
	\label{eq:68g}
\end{equation}
in terms of $L^\infty$-bounds on $\alpha$. The integral (\ref{eq:68g}) corresponds to one component of Newman's contour integral, and the other components are bounded in terms of assumed $L^\infty$-bounds on $\alpha$, while (\ref{eq:68g}) is bounded in terms of the regularity of the half-wave trace. 
This is not completely correct, because some smoothing of the integrand at $\tau=\pm T$ is necessary, but such smoothing is easily handled. 

In the application to Weyl's law, $a(\tau) = \calF \alpha(\tau)$ is essentially the (regularized, weighted) half-wave trace away from the origin,  weighted by $1/\tau$. By Chazarain \cite[\S0]{Chazarain1974}, the singular support of $a$ is the set $\calT \subset \bbR$ of lengths of all geodesic loops (a.k.a.\ ``periodic bicharacteristics'') in $(M,g)$, including loops which consist of multiple laps around a basic loop. Generically, $\calT$ is a discrete set, and the half-wave trace $\operatorname{HWT}_{(M,g)}(\tau)$ will be given by a Laurent series 
\begin{equation}
	\operatorname{HWT}_{(M,g)}(\tau) = \sum_{s\in S} \frac{a_s}{(\tau-T-i0)^s} \bmod C^\infty(\bbR_\tau)\text{ near $T$}, 
	\label{eq:laurent}
\end{equation}
for 
$S(T)\subset \bbZ^+ \cup (1/2) \bbZ^+$ finite and $T$-dependent $\{a_s\}_{s\in S}\subset \bbC$, 
in some neighborhood of each $T \in \calT$. It is therefore to be expected that the size of the remainder in Weyl's law be estimated in terms of 
\begin{enumerate}[label=(\Roman*)]
	\item the size of the elements of $S(T)$ and $\{a_s\}_{s\in S(T)}$ in \cref{eq:laurent}, as $T$ varies,
	\item the density of $\calT \subset \bbR$ (roughly speaking), and 
	\item the growth rate of the smooth remainder in \cref{eq:laurent}.
\end{enumerate}
The case $T=0$ is different from the case $T\neq 0$; the pole of $a$ in the former case yields contributions to $A_\Sigma$ that are one power of $\Sigma$ larger than the poles of the same order in the latter case. (So, due to the weighting by $1/\tau$, the pole at $T=0$ of the half-wave trace yields contributions to $A_\Sigma$ that are two powers larger than the poles of the same order in the latter case.) 

The subsequent sections of this paper are organized as follows: 
\begin{itemize}
	\item \S\ref{sec:primer} is a primer on the Fourier-Laplace transform, 
	\item \S\ref{sec:main} contains the bulk of the Tauberian argument for estimating the asymptotics of $A_\Sigma$, including a flexible and somewhat refined version (\Cref{thm:Ingham}) of Ingham's theorem, with a strong estimate of the remainder,
	\item \S\ref{sec:second} contains the mean-to-max step of the Tauberian argument,  the extraction of weak asymptotics of $N(\Sigma)$ from the asymptotics of $A_\Sigma$ via monotonicity (finishing the proof of \Cref{thm:main_1}), and
	\item \S\ref{sec:final} contains the application to Weyl's law, including the proof of \Cref{thm:main}. 
\end{itemize}  
The harmonic analysis utilized in \S\ref{sec:final} is somewhat overkill, but it does yield a clean statement and proof of the main theorem.

It is worth noting that we do not take into account any cancellations between the various contributions to $N(\Sigma)$ coming from the poles in (\ref{eq:laurent}) for different $T\neq 0$ (though \Cref{thm:Ingham} is stated in sufficient generality to make use of such cancellations if they are known). We simply apply the triangle inequality.  
When $(M,g)$ is a standard flat torus $\bbR^d/\bbZ^d$ of dimension $d\geq 2$ (for which the problem of estimating $N(\Sigma)$ is the Gauss $d$-sphere problem), such cancellations do in fact occur. See \cite{Ivic} for the best known results for the Gauss $d$-sphere problem. See also \cite{Huxley} for an extensive discussion of the $d=2$ case, along with related techniques.

\section{Primer on the Fourier-Laplace transform}
\label{sec:primer} 

Let $\scrS'(\bbR^{\geq 0}) \subset \scrS'(\bbR)$ denote the set of tempered distributions supported on the nonnegative real axis.  That is, $u \in \scrS'(\bbR)$ is in $\scrS'(\bbR^{\geq 0})$ if and only if $u(\chi)=0$ whenever $\operatorname{supp} \chi \subset (-\infty,0)$.  
In this section, we review the \emph{Fourier-Laplace transform}, defined as a map $\calF\calL : \scrS'(\bbR^{\geq 0}) \to A(\bbH_{\mathrm{R}})$, where $A(\bbH_{\mathrm{R}})$ is the set of analytic functions on $\bbH_{\mathrm{R}} = \{z\in \bbC: \Re z>0\}$. (The Fourier-Laplace transform actually makes sense on distributions supported on translates of $\bbR^{\geq 0}$, as is clear from the discussion below, but we have no use for this level of generality, although this may be useful in other applications of Newman's argument.) Detailed presentations of the Laplace transform on tempered distributions can be found in a number of textbooks, e.g. \cite{Dijk2013}, so we will be brief, presenting only the results crucial to the rest of the paper.

The map is defined as follows. For any $\alpha \in \scrS'(\bbR^{\geq 0})$ and $t \geq 0$, the product $\alpha_t(\sigma)=\exp(-t \sigma) \alpha(\sigma) \in \scrS'(\bbR_\sigma^{\geq 0})$ is well-defined, and in fact if $t>0$ we have $\alpha_t \in \cup_{\ell \in \bbZ}  \cap_{k\in \bbZ} \langle \sigma \rangle^k \langle D \rangle^\ell L^2(\bbR_\sigma)$, so the Fourier transform 
\begin{equation}
	\calF \alpha_t(\tau) = \int_{-\infty}^{+\infty} e^{-i\tau \sigma - t \sigma } \alpha(\sigma) \dd \sigma \in \bigcup_{\ell \in \bbZ} \bigcap_{k\in \bbZ} \langle \tau \rangle^\ell  \langle D \rangle^k  L^2(\bbR_\tau) 
\end{equation}
is well-defined, the integral above being formal if $\alpha$ is not a subexponential function (but typically a Lebesgue integral in the cases of interest). By the Sobolev embedding theorem, we deduce that $\calF \alpha_t(\tau)$ is actually a smooth function of $\tau$. Consequently, we can define a function $\calF \calL \alpha :\bbH_{\mathrm{R}} \to \bbC$ by $\calF \calL \alpha(t+i\tau) = \calF \alpha_t(\tau)$, $t>0$ and $\tau \in \bbR$. 

\begin{proposition}
	\label{prop:FLA}
	Given $\alpha \in \scrS'(\bbR^{\geq 0})$, 
	$\calF \calL \alpha$ is analytic. 
\end{proposition}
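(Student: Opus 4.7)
The plan is to realize $\calF\calL\alpha(z) = \langle \alpha, \chi(\sigma)e^{-z\sigma}\rangle$ for $z \in \bbH_{\mathrm{R}}$, where $\chi \in C^\infty(\bbR)$ is any smooth cutoff equal to $1$ on a neighborhood of $[0,\infty)$ and vanishing on $(-\infty, -2]$. Since $\alpha$ is supported in $[0,\infty)$, this pairing is well-defined (because $\chi(\sigma)e^{-z\sigma} \in \scrS(\bbR)$ for $\Re z > 0$), independent of the choice of $\chi$, and equal to $\calF\alpha_t(\tau)$ when $z = t + i\tau$ --- the last identification follows by Fubini-Tonelli when $\alpha$ is polynomially bounded, and in general by noting both sides are sequentially continuous in $\alpha \in \scrS'(\bbR^{\geq 0})$ and agree on compactly supported distributions, which are dense. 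Analyticity will then reduce to showing that $\Phi \colon \bbH_{\mathrm{R}} \to \scrS(\bbR)$ defined by $\Phi(z)(\sigma) = \chi(\sigma)e^{-z\sigma}$ is holomorphic as a map into the Fr\'echet space $\scrS(\bbR)$; composing with the continuous linear functional $\alpha$ then yields that $\calF\calL\alpha(z) = \alpha(\Phi(z))$ is complex differentiable at each $z \in \bbH_{\mathrm{R}}$, hence analytic.

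To verify that $\Phi$ is holomorphic, I would fix $z_0 \in \bbH_{\mathrm{R}}$ and use the Taylor expansion $e^{-w} = 1 - w + w^2 R(w)$ with remainder $R(w) = \int_0^1 (1-u)e^{-wu}\,du$, which gives
\[
\frac{\Phi(z_0+h)(\sigma) - \Phi(z_0)(\sigma)}{h} - \bigl(-\sigma \Phi(z_0)(\sigma)\bigr) = h \sigma^2 R(h\sigma)\chi(\sigma)e^{-z_0\sigma}.
\]
For $|h| \leq \Re(z_0)/2$, the integral representation gives $|R(h\sigma)| \leq (1/2) e^{|h||\sigma|}$, so on $\sigma \geq 0$ one has $|R(h\sigma)e^{-z_0\sigma}| \leq (1/2) e^{-\Re(z_0)\sigma/2}$, while on $\sigma \in [-2, 0]$ (the only other relevant range, due to the support of $\chi$) both factors are uniformly bounded. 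Combined with the polynomial factor $\sigma^2$ and with the derivatives falling either on the exponentials (producing further polynomials) or on $\chi$ (producing compactly supported factors), every Schwartz seminorm $\sup_\sigma \langle \sigma\rangle^N |\partial_\sigma^k(\cdot)|$ of the right-hand side is $O(|h|)$ uniformly in $h$ small. Hence the displayed quantity tends to $0$ in $\scrS(\bbR)$ as $h \to 0$, so $\Phi'(z_0) = -\sigma\Phi(z_0)$ exists in $\scrS(\bbR)$.

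The main technical obstacle is this Schwartz-seminorm bound on the remainder for \emph{complex} increments $h$, but it is straightforward thanks to the integral form of $R$: restricting to $|h| \leq \Re(z_0)/2$ keeps the argument $z_0 + h$ well inside $\bbH_{\mathrm{R}}$ and preserves the exponential decay of $e^{-z\sigma}$ as $\sigma \to +\infty$, which dominates the polynomial factors generated by differentiation. Once $\Phi$ is $\scrS$-valued holomorphic, the conclusion is immediate and delivers the bonus formula $\partial_z \calF\calL\alpha(z) = -\langle \alpha, \sigma\chi(\sigma)e^{-z\sigma}\rangle$, consistent with differentiating under the pairing.
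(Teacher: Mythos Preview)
Your proof is correct and takes a genuinely different route from the paper's. The paper works on the two–variable side: it views $1_{t\geq t_0}\calF\calL\alpha(t+i\tau)$ as the partial Fourier transform of $1_{t\geq t_0}e^{-t\sigma}\alpha(\sigma)\in\scrS'(\bbR_t\times\bbR_\sigma)$, then computes $\partial_t^2$ and $\partial_\tau^2$ distributionally to show harmonicity, invokes elliptic regularity to obtain smoothness, and finally checks the Cauchy--Riemann equations. Your argument instead stays on the one–variable side and is more elementary: you realize $\calF\calL\alpha(z)=\langle\alpha,\Phi(z)\rangle$ with $\Phi(z)=\chi(\sigma)e^{-z\sigma}$, verify by a direct Taylor remainder estimate that $\Phi:\bbH_{\mathrm{R}}\to\scrS(\bbR)$ is complex differentiable, and then compose with the continuous linear functional $\alpha$. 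This bypasses elliptic regularity entirely and delivers the derivative formula $\partial_z\calF\calL\alpha(z)=-\langle\alpha,\sigma\chi(\sigma)e^{-z\sigma}\rangle$ for free. One small point: your density justification of the identification $\calF\alpha_t(\tau)=\langle\alpha,\chi e^{-(t+i\tau)\cdot}\rangle$ implicitly uses that $\alpha\mapsto\calF\alpha_t(\tau)$ (the \emph{pointwise} value) is sequentially continuous on $\scrS'(\bbR^{\geq 0})$, which is not automatic from weak-$*$ convergence alone; a cleaner way to close this is to check directly that $\int\langle\alpha,\chi e^{-(t+i\tau)\cdot}\rangle\varphi(\tau)\,d\tau=\langle\alpha,\chi e^{-t\cdot}\calF\varphi\rangle=\langle\calF\alpha_t,\varphi\rangle$ for every $\varphi\in\scrS$, interchanging the integral and the pairing via Riemann sums or the Bochner integral (the seminorms $\|\chi e^{-(t+i\tau)\cdot}\|$ grow only polynomially in $\tau$, so $\varphi$ controls integrability).
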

For the following proof, we recall the following facts about tempered distributions $u \in \scrS'(\bbR^2)$ of the form $u(x,y)=u_1(x)u_2(y)$ for $u_1,u_2 \in \scrS'(\bbR)$: 
\begin{itemize}
	\item $\partial_x u(x,y) = (\partial_x u_1(x)) u_2(y)$, 
	\item $\partial_y y(x,y) = u_1(x) (\partial_y u_2(y))$,
\end{itemize}
the partial Fourier transform in either of $x,y$ is well-defined and has the expected mapping properties, and the distributional derivatives of any smooth distribution agree with the classical derivatives (and likewise for a.e.\ derivatives, etc.). 
\begin{proof}
	For each $t_0>0$, let $\alpha^{(t_0)}(t,\sigma)=1_{t\geq t_0} e^{-t\sigma} \alpha(\sigma)$, which is a well-defined element of $\scrS'(\bbR_t\times \bbR_\sigma)$.
	Let 
	\begin{equation}
		1_{t\geq t_0} \calF \calL \alpha(t+i\tau) = 
		\begin{cases}
			0 & (t<t_0) \\
			\calF\calL \alpha(t+i\tau) & (t \geq t_0), 
		\end{cases}
	\end{equation}
	which we can interpret as an element of $\scrS'(\bbR_t\times \bbR_\tau)$. Clearly, this is the partial Fourier transform of $\alpha^{(t_0)}(t,\sigma)$ along $\bbR_\sigma$. 
	Fix $\chi \in C_{\mathrm{c}}(\bbC)$ supported in $2t_0+\bbH_{\mathrm{R}}$. 
	We see that 
	\begin{equation}
		\chi(z)\frac{\partial^2 }{\partial t^2}  1_{t\geq t_0} \calF \calL \alpha(t+i\tau)=   +\chi(z)  1_{t\geq t_0} \calF \calL (\sigma^2 \alpha(\sigma))(t+i\tau),
	\end{equation}
	\begin{equation}
		\chi(z)\frac{\partial^2 }{\partial \tau^2}  1_{t\geq t_0} \calF \calL \alpha(t+i\tau) = - \chi(z) 1_{t\geq t_0} \calF \calL (\sigma^2 \alpha(\sigma))(t+i\tau),
	\end{equation}
	which means that $\calF \calL \alpha(t+i\tau)$ is harmonic -- and therefore smooth by elliptic regularity -- in the interior of the support of $\chi$. By the arbitrariness of $\chi$ and $t_0$, we deduce that $\calF \calL\alpha$ is actually smooth in $\bbH_{\mathrm{R}}$. 
	
	Clearly, $\calF\calL\alpha$ satisfies the Cauchy-Riemann equations in the distributional sense (by analogous reasoning), from which it follows that it satisfies the Cauchy-Riemann equations in the usual classical sense. We conclude that $\calF \calL$ is analytic, as claimed. 
\end{proof}
So, $\alpha \mapsto \calF \calL \alpha$ defines a map $\calF \calL : \scrS'(\bbR^{\geq 0}) \to A(\bbH_{\mathrm{R}})$, and this is the Fourier-Laplace transform.

Note that not all elements of $\scrS'(\bbR^{\geq 0})$ that are locally integrable admit a Laplace transform in the sense of a Lebesgue integral: 
\begin{equation}
	\begin{cases}
		e^{\sigma} \cos(e^{\sigma}) & (\sigma\geq 0) \\
		0 & (\sigma <0) 
	\end{cases}
	\label{eq:cnr}
\end{equation}
defines an element of $\scrS'(\bbR_\sigma^{\geq 0}) \cap L^1_{\mathrm{loc}}(\bbR_\sigma)$, but the Laplace transform of (\ref{eq:cnr}) does not make sense as a Lebesgue integral. Nevertheless, we have:
\begin{propositionp}
	Suppose that $\alpha \in \cup_{k\in \bbN} \langle \sigma \rangle^k L^1(\bbR_\sigma)$, so that $e^{-t\sigma} \alpha(\sigma) \in L^1(\bbR_\sigma)$ for each $t>0$. Then,
	\begin{equation}
		\calF \calL \alpha(t+i\tau) = \int_0^\infty e^{-i \tau \sigma - t\sigma} \alpha(\sigma) \dd \sigma, 
	\end{equation}
	where the right-hand side is now a well-defined Lebesgue integral. 
\end{propositionp}
This is just a special case of the statement that Schwartz's and Lebesgue's Fourier transforms agree on $L^1(\bbR)$. 

We now discuss distributional boundary values. 

\begin{proposition}
	\label{lem:bdy}
	For any $\alpha \in \scrS'(\bbR^{\geq 0})$, the indexed set $\{\calF \calL \alpha (t+i\tau)\}_{t>0} \subset \scrS'(\bbR_\tau)$ satisfies $\calF \calL \alpha(t+i \tau)\to \calF \alpha(\tau)$ in $\scrS'(\bbR_\tau)$, i.e.\ $\calF \alpha$ is the ``distributional boundary value'' of $\calF \calL \alpha$ on the imaginary axis. 
\end{proposition}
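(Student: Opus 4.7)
The plan is to reduce the claim to showing that $\alpha_t(\sigma) := e^{-t\sigma}\alpha(\sigma)\to \alpha(\sigma)$ in $\scrS'(\bbR_\sigma)$ as $t\to 0^+$, whereupon the sequential continuity of $\calF : \scrS'(\bbR_\sigma)\to \scrS'(\bbR_\tau)$, combined with the identity $\calF\calL \alpha(t+i\tau) = \calF \alpha_t(\tau)$ used to define $\calF\calL \alpha$ in the paragraph preceding \Cref{prop:FLA}, finishes the job.

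To prove $\alpha_t \to \alpha$ in $\scrS'$, I would fix $\phi\in \scrS(\bbR)$ and pick any cutoff $\chi\in C^\infty(\bbR)$ with $\chi\equiv 1$ on $[0,\infty)$ and $\operatorname{supp}\chi\subset [-1,\infty)$. Because $\operatorname{supp}\alpha\subset [0,\infty)\subset \{\chi = 1\}$, one has $\langle \alpha,\phi\rangle = \langle \alpha,\chi\phi\rangle$; moreover $e^{-t\sigma}\chi(\sigma)$ is smooth and uniformly bounded on $\bbR$ for $t\in (0,1]$, since its support lies in $\{\sigma\geq -1\}$ on which $e^{-t\sigma}\leq e$. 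Consequently $e^{-t\sigma}\chi(\sigma)$ is a multiplier on $\scrS'$, the product $e^{-t\sigma}\chi(\sigma)\phi(\sigma)$ is Schwartz, and defining $\alpha_t = e^{-t\sigma}\chi(\sigma)\alpha$ (which, by the same support consideration, is independent of the particular choice of $\chi$), we obtain
\begin{equation}
	\langle \alpha_t - \alpha, \phi\rangle = \langle \alpha, (e^{-t\sigma}-1)\chi(\sigma)\phi(\sigma)\rangle.
\end{equation}
By the $\scrS'$-continuity of $\alpha$, it therefore suffices to show that $\psi_t(\sigma) := (e^{-t\sigma}-1)\chi(\sigma)\phi(\sigma)\to 0$ in $\scrS(\bbR)$ as $t\to 0^+$.

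This last convergence is the main technical step. Expanding $\partial^j \psi_t$ by the Leibniz rule produces one ``main'' summand $(e^{-t\sigma}-1)\partial^j(\chi\phi)$ and auxiliary summands $c_{j,k}(-t)^k e^{-t\sigma}\partial^{j-k}(\chi\phi)$ for $1\leq k\leq j$. The auxiliary ones tend to zero in every $\scrS$-seminorm because of the factor $t^k$, the uniform bound $e^{-t\sigma}\leq e$ on $\operatorname{supp}\chi$ for $t\in (0,1]$, and the Schwartz decay of $\chi\phi$. To handle the main summand I would split $\operatorname{supp}\chi$ into a compact piece $|\sigma|\leq M$, on which the estimate $|e^{-t\sigma}-1|\leq tM e^{tM}$ is $O(t)$ for fixed $M$, and a tail $|\sigma|>M$, on which the trivial bound $|e^{-t\sigma}-1|\leq 1$ suffices because the Schwartz decay of $\partial^j(\chi\phi)$ forces $(1+|\sigma|)^N|\partial^j(\chi\phi)|\to 0$ uniformly in $\sigma$ as $M\to\infty$. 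Choosing $M$ large first (to control the tail) and $t$ small second (to control the compact piece) delivers convergence in every $\scrS$-seminorm. The expected main obstacle is exactly this two-scale interpolation: the trivial bound is not small in $t$, while the linearization $|e^{-t\sigma}-1|\leq t|\sigma|e^{t|\sigma|}$ is not controlled as $|\sigma|\to\infty$, so one must patch them together using the Schwartz decay of $\phi$, and do so uniformly over the derivative orders $j$ that appear in each $\scrS$-seminorm.
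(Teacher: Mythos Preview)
Your proposal is correct and follows essentially the same route as the paper: reduce to $\alpha_t\to\alpha$ in $\scrS'$ via continuity of $\calF$, then show that $(e^{-t\sigma}-1)\varphi(\sigma)\to 0$ in $\scrS$ for Schwartz $\varphi$ supported in a half-line, splitting the Leibniz expansion into the $t^k e^{-t\sigma}$ terms (easy) and the $(e^{-t\sigma}-1)$ term (handled by a compact/tail split). The paper's proof is identical in structure but terser---it replaces your explicit cutoff $\chi$ by simply working with test functions that vanish for sufficiently negative input, and compresses your two-scale argument into the single remark that $\varphi\in\langle\sigma\rangle^{-1}L^\infty$ suffices.
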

\begin{proof}
	Since $\calF \calL \alpha (t+i\tau) = \calF  \alpha_t(\tau)$, it suffices (since $\calF : \scrS'(\bbR)\to \scrS'(\bbR)$ is continuous) to show that $\alpha_t(\sigma)\to \alpha(\sigma)$ in $\scrS'(\bbR_\tau)$ as $t\to 0^+$. This latter claim follows from the proposition that if $\varphi \in \scrS(\bbR)$ vanishes identically for sufficiently negative input, then $e^{-t \sigma } \varphi(\sigma)$ converges to $\varphi(\sigma)$ in $\scrS(\bbR_\sigma)$. This is the statement that, for all $\varphi$ as above, $ \sigma^\ell \partial^k_\sigma (e^{-t \sigma} \varphi(\sigma) - \varphi(\sigma)) \to 0$ as $t\to 0^+$ in $L^\infty(\bbR_\sigma)$ for each $\ell, k\in \bbN$. By Leibniz's rule, this follows from the statement that   
	\begin{equation}
		t^k e^{-t \sigma} \varphi(\sigma) \to 0 \quad \text{ and }  \quad (e^{-t \sigma} \varphi(\sigma) - \varphi(\sigma)  ) \to 0 
		\label{eq:19g}
	\end{equation}
	in $L^\infty(\bbR_\sigma)$ 
	for all $k\geq 1$ and all $\varphi \in \scrS(\bbR)$ vanishing identically for sufficiently negative input. The first is clear, since $\lVert t^k e^{-t \sigma} \varphi(\sigma) \rVert_{L^\infty(\bbR_\sigma)} \leq t^k  \lVert \varphi \rVert_{L^\infty} \sup_{\sigma\in \operatorname{supp} \varphi} e^{-t \sigma}$. 
	
	The second statement in \cref{eq:19g} holds because $\varphi(\sigma) \in \langle \sigma \rangle^{-1} L^\infty(\bbR_\sigma)$.
\end{proof}

The following proposition (while likely not strictly necessary and stated without proof in \cite{DG}) can be convenient in simplifying the technical details of Newman's argument. (This simplification is removed in e.g. \cite[Chapter III, \S14]{KTextbook}\cite{debruyne2019complex}, and we expect that the modified argument carries over, \emph{mutatis mutandis}.)
This result is a special case of a result in the theory of boundary problems for elliptic differential operators.
\begin{propositionp}
	\label{prop:smoothness}
	Suppose that $\calF \alpha \in \scrS'(\bbR)$ is smooth in some neighborhood of $T\in \bbR$. Then there exists some relatively open neighborhood $U\subset \smash{\overline{\bbH}_{\mathrm{R}}}$ of $iT$ in which $\calF \calL \alpha$ extends smoothly to the boundary. 
\end{propositionp}

\section{Tauberian argument: Newman's computation}
\label{sec:main}

The cornerstone of Newman's Tauberian argument is the following complex-analytic computation, expressing the  integral 
\begin{equation}
	A: [0,\infty)_\Sigma \to \bbR , \quad A:\Sigma\mapsto A_\Sigma, \quad A_\Sigma = \int_0^\Sigma \alpha(\sigma) \dd \sigma 
\end{equation}
of a tempered, locally integrable (possibly complex-valued) function supported on the nonnegative real axis -- denoted $\alpha \in \scrS'(\bbR^{\geq 0})\cap L^1_{\mathrm{loc}}(\bbR^{\geq 0})$ throughout this section -- in terms of 
\begin{itemize}
	\item a contribution from the principal part of the Laurent series of $a(\tau) = \calF \alpha(\tau)$ at $\tau = 0$ (whose well-definedness will be one of our hypotheses),
	\item a separately treated contribution from the constant term of the Laurent series,
	\item a contribution from the singularities $T\in \calT\backslash \{0\}$ of $a(\tau)$ in $[-T,+T]$ away from $\tau = 0$,
	\item  a typically small contribution from the remainder of $a(\tau)$ on $[-T,+T]$, and
	\item  a controllable contribution from $a(\tau)$ outside of $[-T,+T]$,
\end{itemize}
where $T\geq 1$ is a parameter of the argument.
We will not be explicit about the contribution to $A_\Sigma$ from the off-origin poles of $a$, which can be treated as an error to be controlled (and in applications should typically be the same size of the error from $a(\tau)$ outside of $[-T,+T]$). We handle the off-origin singularities of $a(\tau)$ separately from the smooth background because while it is most convenient to measure the latter in an $L^1$-based Sobolev space, in order to be sharp it is necessary to measure the former as a pseudomeasure, i.e.\ in $\calF \langle \sigma \rangle^K L^\infty(\bbR_\sigma)$ for some $K\geq 0$.

Based on our treatment of each contribution, we decompose
\begin{equation}
	a(\tau) = a_{00} \calF \chi_0 +a_0 (\tau) +b (\tau) + c(\tau), 
	\label{eq:decomp}
\end{equation} 
where $a_{00}\in \bbC$, $\chi_0 \in C_{\mathrm{c}}^\infty(\bbR^+)$ satisfies $\int_{0}^\infty \chi_0(\sigma) \dd \sigma = 1$, and 
\begin{itemize}
	\item $\smash{a_0(\tau) = \sum_{j \in \calJ} a_{0}^{(j)} (\tau-i0)^{-j}}$ for some finite subset $\calJ\subset \bbR^+$ of positive real numbers and some indexed collection $\smash{\{a_0^{(j)}\}_{j\in \calJ}\subset \bbC}$,
	\item  $c(\tau) \in \tau L^1_{\mathrm{loc}}(-\epsilon,+\epsilon) \cap \scrS'(\bbR_\tau)$ is a tempered distribution (with discrete singular support) of the form $c=\calF \gamma$ for 
	\begin{equation}
		\gamma(\sigma) \in \scrS'(\bbR^{\geq 0}) \cap \cup_{K\geq 0}\langle \sigma \rangle^K L^\infty(\bbR_\sigma^{\geq 0}), 
		\label{eq:gamma_inc}
	\end{equation}  and
	\item $b=a-a_0-c$ is the remainder, which we will not analyze (but will in applications typically be zero). 
\end{itemize}
We will further split $c(\tau) =p(\tau) +g(\tau)$, where $g(\tau)$ is regular (so $p$ absorbs all of the off-origin poles). The function $g$ is the portion of $c$ that will be measured in an $L^1$-based Sobolev space, while $p$ will be analyzed in $\calF \langle \sigma \rangle^K L^\infty(\bbR_\sigma)$. 
Given the setup above, $b$ can be the entirety of $a$. So no matter what $\alpha$ is, we can find $a_{00},a_0,b,c$ as above such that (\ref{eq:decomp}) holds, but consequently the results below are only interesting when the contribution from $b$ can be analyzed somewhat explicitly --- e.g.\ if $b=0$ (which, as mentioned above, will be the case in typical applications). 

The main result of this section, presented in \Cref{thm:Ingham}, is a result of the following form: given $\alpha,a,a_{00}, a_0,b,c,g,p$ as above, and given appropriate control on $g,p$,
\begin{equation}
	\int_0^\Sigma \alpha(\sigma) \dd \sigma = \sum_{j\in \calJ} \frac{e^{\pi i j/2}}{j!} a_{0}^{(j)} \Sigma^j+\int_0^\Sigma \beta(\sigma) \dd \sigma + a_{00}  + \frac{1}{\calR(\Sigma)}
\end{equation}
for sufficiently large $\Sigma$, 
where $\beta=\calF^{-1} b$ is the inverse Fourier transform of $b$ and $\calR(\Sigma)>0$ is to-be-specified. This result is to be thought of as an estimation of the left-hand side by the right-hand side sans the $1/\calR(\sigma)$ term.
The $1/\calR(\Sigma)$ term is to be thought of as the estimate's error, bounded in terms of the global Sobolev regularity of $g$, the size of $\calF p$, and the assumed growth rate of $\gamma$ (that is, the $K$ witnessing \cref{eq:gamma_inc}). Hence, if e.g.\ $b,\beta=0$ and the function $1/\calR(\Sigma)$ is sufficiently slowly growing (or, better yet, decaying), then we understand the $\Sigma\to\infty$ asymptotics of $A_\Sigma$, at least to what is generically leading order. Our estimate is closely related to -- but not obviously equivalent to -- a result of Debruyne \cite[\S6]{Debruyne}, but our the argument is different (following Newman).

The key computation is:
\begin{proposition}
	\label{prop:setup}
	Suppose that $\alpha  \in  \scrS'(\bbR^{\geq 0})\cap L^1_{\mathrm{loc}}(\bbR^{\geq 0})$. Let $a= \calF \alpha$.
	Suppose we are given some distributions $b,c \in \scrD'(\bbR)$ of the form 
	\begin{enumerate}
		\item $b = \calF \beta$ for $\beta \in  \scrS'(\bbR^{\geq 0})\cap L^1_{\mathrm{loc}}(\bbR^{\geq 0})$ and 
		\item $c = \calF \gamma$ for $\gamma \in  \scrS'(\bbR^{\geq 0})\cap L^1_{\mathrm{loc}}(\bbR^{\geq 0})$
	\end{enumerate} 
	such that $a=b+c$ and $c$ is continuous in some neighborhood of the origin. 
	Then, if $T>0$ is such that $c$ is smooth in some neighborhood of the two points $-T,+T\in \bbR$, we can write,  for each $\Sigma>0$
	\begin{equation}
		\int_0^\Sigma \alpha(\sigma) \dd \sigma = \int_0^\Sigma \beta(\sigma) \dd \sigma + c(0)+ I_1(T,\Sigma,M)+I_2(T,\Sigma,M)+I_3(T,\Sigma,M),
		\label{eq:r10}
	\end{equation}
	for arbitrary $M\in \bbN^+$, where $I_1$ is a well-defined Riemann integral given by \cref{eq:I11},  
	\begin{align*}
	I_1(T,\Sigma,M) &= \frac{1}{2\pi i} \int_{\Gamma_+(T,0)}  (C_\Sigma(t) - C(t)) e^{t\Sigma}\Big( 1 + \frac{t^2}{T^2} \Big)^M \frac{\dd t}{t}, 
	\intertext{and $I_2,I_3$ are well-defined distributional pairings given by (\ref{eq:I2}, \ref{eq:I3}),} 
	I_2(T,\Sigma,M) &= -\frac{1}{2\pi i } \int_{-T}^{+T} C_\Sigma(i\tau) e^{i \tau \Sigma}\Big( 1 - \frac{\tau^2}{T^2} \Big)^M \frac{\dd \tau}{ \tau+  i0} ,\\ 
	I_3(T,\Sigma,M) &= +\frac{1}{2\pi i} \int_{-T}^{+T} c(\tau) e^{i \tau \Sigma}\Big( 1 - \frac{\tau^2}{T^2} \Big)^M \frac{\dd \tau}{ \tau+  i0},
	\end{align*}
	where $\Gamma_+(T,0)$ is the positively oriented contour whose path is the semicircle arc of radius $T$. 
\end{proposition}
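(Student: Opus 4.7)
The plan is Newman's contour-integration argument: apply Cauchy's theorem to the boundary of a right half-disk of radius $T$, punctured by a small semicircle at the origin, then let the puncture shrink. The residue produced by the $1/t$ factor at $t=0$ will be exactly $C_\Sigma(0) - c(0) = \int_0^\Sigma\gamma - c(0)$, and the surviving contour pieces will be precisely $I_1, I_2, I_3$. Since $\calF(\alpha - \beta - \gamma) = a - b - c = 0$ and $\alpha,\beta,\gamma$ are locally integrable and supported on $[0,\infty)$, Fourier injectivity gives $\alpha = \beta + \gamma$ a.e., so it suffices to prove $\int_0^\Sigma\gamma(\sigma)\,d\sigma = c(0) + I_1 + I_2 + I_3$. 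The relevant analytic objects are the entire function $C_\Sigma(t) = \int_0^\Sigma e^{-t\sigma}\gamma(\sigma)\,d\sigma$ (so $C_\Sigma(0) = \int_0^\Sigma \gamma$) and the Fourier-Laplace transform $C(t) = \calF\calL\gamma(t)$, which by \Cref{prop:FLA} is analytic on $\bbH_{\mathrm{R}}$ and by \Cref{lem:bdy} has distributional boundary value $c$ on $i\bbR$. Continuity of $c$ at $0$ implies $C$ extends continuously to $t = 0$ with $C(0) = c(0)$, and smoothness of $c$ near $\pm T$ yields smooth extension of $C$ near $\pm iT$ via \Cref{prop:smoothness}.

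For $0 < \delta < T$, let $\Gamma_\delta$ denote the positively oriented boundary of $\{t \in \bbH_{\mathrm{R}} : \delta < |t| < T\}$, consisting of the outer arc $\Gamma_+(T,0)$ (from $-iT$ to $+iT$), the two imaginary-axis segments $[+i\delta,+iT]$ and $[-iT,-i\delta]$ traversed downward, and an inner semicircular arc of radius $\delta$ about the origin traversed clockwise. The integrand
\[
\bigl(C_\Sigma(t) - C(t)\bigr) e^{t\Sigma}\bigl(1 + t^2/T^2\bigr)^M \frac{1}{t}
\]
is holomorphic in the enclosed region (both $C_\Sigma$ and $C$ are analytic there, and the pole at $0$ has been excised) and continuous up to $\Gamma_\delta$, so Cauchy's theorem gives a vanishing contour integral. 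Passing to $\delta \to 0^+$: uniform continuity of $C_\Sigma - C$ near $0$ makes the inner-arc contribution tend to $-i\pi(C_\Sigma(0) - c(0))$; and the two imaginary-axis segments combine (with a sign from the downward orientation) to yield $-\mathrm{PV}\!\int_{-T}^{+T}(C_\Sigma(i\tau) - c(\tau))e^{i\tau\Sigma}(1 - \tau^2/T^2)^M\,d\tau/\tau$. Applying the Sokhotski-Plemelj identity $\mathrm{PV}(1/\tau) = 1/(\tau + i0) + i\pi\delta_0(\tau)$ rewrites this principal value as the $1/(\tau + i0)$ pairing appearing in $I_2 + I_3$, with the residual $i\pi\delta_0$ contribution exactly canceling the inner-arc limit.

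Rearranging yields $C_\Sigma(0) - c(0) = I_1 + I_2 + I_3$ after splitting the $1/(\tau + i0)$ pairing into its $C_\Sigma(i\tau)$ part (producing $I_2$, including the sign) and its $c(\tau)$ part (producing $I_3$); this splitting is legitimate because $1/(\tau + i0)$ is a tempered distribution and can be paired separately against the smooth $C_\Sigma(i\tau)e^{i\tau\Sigma}(1 - \tau^2/T^2)^M$ and against the distribution $c(\tau)e^{i\tau\Sigma}(1 - \tau^2/T^2)^M$, which is continuous near $0$ by hypothesis and tamed at the endpoints $\pm T$ by the vanishing of $(1 - \tau^2/T^2)^M$ there combined with the local smoothness of $c$. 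The main obstacle is the $\delta \to 0^+$ limit itself: organizing the Sokhotski-Plemelj bookkeeping so that the classical principal value is correctly identified with the distributional $1/(\tau + i0)$ prescription in the statement, and verifying the exchange of limits. Well-definedness of the integrals at the corners $\pm iT$ is secured by \Cref{prop:smoothness} and the damping factor $(1 + t^2/T^2)^M$, which together ensure that the integrand extends continuously across the junction between $\Gamma_+(T,0)$ and the imaginary axis.
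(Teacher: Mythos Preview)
Your overall strategy is Newman's, as in the paper, and the residue bookkeeping you outline is correct in spirit. But there is a genuine gap in the step where you assert that the integrand is ``continuous up to $\Gamma_\delta$'' and then invoke Cauchy's theorem. Your contour $\Gamma_\delta$ includes the imaginary-axis segments $i[\delta,T]$ and $i[-T,-\delta]$, and on these segments $C(i\tau)$ need not exist as a continuous function: the hypotheses only give continuity of $c$ near $0$ and smoothness near $\pm T$, while elsewhere on $[-T,T]$ the boundary value $c$ may be a genuine distribution (this is exactly the situation for the half-wave trace, where $c$ has poles at the nonzero lengths in $\calT$). So the contour integral you write down is not a priori a Riemann integral, and Cauchy's theorem does not apply as stated. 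The same objection hits your principal-value expression $\mathrm{PV}\!\int_{-T}^{+T}(C_\Sigma(i\tau)-c(\tau))\cdots\,d\tau/\tau$: away from $0,\pm T$ this is a distributional pairing, not a classical PV integral, so the Sokhotski--Plemelj manipulation needs to be justified distributionally rather than pointwise.

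The paper circumvents this by keeping the entire contour inside the open half-plane $\bbH_{\mathrm{R}}$, where $C$ is genuinely analytic: instead of the imaginary axis it uses the vertical line $\Re t=\delta>0$, and instead of a punctured-disk residue at $0$ it uses Cauchy's integral formula at an interior point $t_0\in(\delta,T)$, with kernel $1/(t-t_0)$. The limit $\delta\to 0^+$ is then taken using \Cref{lem:bdy} (distributional boundary values) for the $C$-piece of $I_3$, after localizing with a cutoff $\chi$ so that the possibly singular part of $c$ is handled as a pairing $\langle c,\chi\cdot(\text{smooth})\rangle$ rather than as an integral. Only afterwards is $t_0\to 0^+$ taken, and at that stage the continuity of $c$ at $0$ is what produces $c(0)$ and the $1/(\tau+i0)$ prescription simultaneously. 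Your small-semicircle/$\delta$ regularization conflates these two limits; to make your argument rigorous you would need to insert an additional right-shift parameter and take it to zero first, which is essentially what the paper does.
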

\begin{proof}
	For each $T>0$ and $\delta \in [0,T)$, let $\Gamma=\Gamma(T,\delta)$ denote the boundary of the region $
	\{z\in \bbC : |z| \leq T, \Re z \geq \delta \}$. 
	We consider $\Gamma$ as a contour, oriented counterclockwise. 
	Set $\Gamma_0(T,\delta) = \{z\in \bbC: |z| \leq T, \Re z = \delta\}$ and $\Gamma_+(T,\delta)=\{z\in \bbC: |z| = T, \Re z > \delta\}$,  so that
	\begin{equation} 
		\Gamma(T,\delta) = \Gamma_+(T,\delta) \cup \Gamma_0(T,\delta).
	\end{equation} 
	We consider $\Gamma_0$ and $\Gamma_+$ as contours, with orientations consistent with that of $\Gamma$. (That is, $\Gamma_0$ is oriented top-to-bottom, and $\Gamma_+$ is oriented counterclockwise.)
	
	Let $A,B,C : \bbH_{\mathrm{R}} \to \bbC$ denote the (well-defined) Fourier-Laplace transforms 
	\begin{equation}
		A(t) = \int_0^\infty \alpha(\sigma) e^{-\sigma t} \dd \sigma, \quad
		B(t) = \int_0^\infty \beta(\sigma) e^{-\sigma t} \dd \sigma, \quad 
		C(t) = \int_0^\infty \gamma(\sigma) e^{-\sigma t} \dd \sigma.
	\end{equation}
	(These integrals may have to be interpreted formally if $\alpha,\beta,\gamma$ are not in $\cup_{K\geq 0} \langle \sigma \rangle^K L^1(\bbR_\sigma)$, but $A = \calF\calL \alpha,B=\calF\calL \beta,C=\calF\calL\gamma$ are always well-defined functions, as shown in the previous section.)
	For each $\Sigma\geq 0$, let $A_\Sigma,B_\Sigma,C_\Sigma:\bbC\to \bbC$ be the \emph{entire} functions defined by the Lebesgue integrals
	\begin{equation}
		A_\Sigma(t) = \int_0^\Sigma \alpha(\sigma)  e^{-\sigma t} \dd \sigma, \quad 
		B_\Sigma(t) = \int_0^\Sigma \beta(\sigma)  e^{-\sigma t} \dd \sigma, \quad C_\Sigma(t) =\int_0^\Sigma \gamma(\sigma)  e^{-\sigma t} \dd \sigma. 
	\end{equation}
	(We are assuming that $\alpha,\beta,\gamma$ are locally integrable. Entirety is proven as in \Cref{prop:FLA}.)
	Note that, by \Cref{lem:bdy}, $a,b,c$, are the distributional boundary values of $A,B,C$ (respectively) at the imaginary axis. 
	
	By the Cauchy integral theorem: for each $t_0 \in (\delta,T)$ and $M\in \bbN^+$, 
	\begin{equation}
		(C_\Sigma(t_0) - C(t_0)) e^{t_0 \Sigma} \Big( 1 + \frac{t_0^2}{T^2} \Big)^M  = \frac{1}{2\pi i} \int_\Gamma  (C_\Sigma(t) - C(t)) e^{t\Sigma}\Big( 1 + \frac{t^2}{T^2} \Big)^M \frac{\dd t}{t-t_0}. 
		\label{eq:o7}
	\end{equation}  
	We split up the integral in (\ref{eq:o7}) into three pieces:
	\begin{equation}
		\frac{1}{2\pi i} \int_\Gamma  (C_\Sigma(t) - C(t)) e^{t\Sigma}\Big( 1 + \frac{t^2}{T^2} \Big)^M \frac{\dd t}{t-t_0} = I_1 + I_2 + I_3,
		\label{eq:z09}
	\end{equation}
	where $I_1=I_1(\delta,t_0,T,\Sigma,M)$, $I_2=I_2(\delta,t_0,T,\Sigma,M)$, and $I_3 = I_3(\delta,t_0,T,\Sigma,M)$ are functions of $\delta,t_0,T,\Sigma,M$ given by
	\begin{align}
		I_1 &=+\frac{1}{2\pi i} \int_{\Gamma_+(T,\delta)}  (C_\Sigma(t) - C(t)) e^{t\Sigma}\Big( 1 + \frac{t^2}{T^2} \Big)^M \frac{\dd t}{t-t_0}, \\ 
		I_2 &= +\frac{1}{2\pi i}  \int_{\Gamma_0(T,\delta)}  C_\Sigma(t) e^{t\Sigma}\Big( 1 + \frac{t^2}{T^2} \Big)^M \frac{\dd t}{t-t_0}, \\	
		I_3 &= -\frac{1}{2\pi i}  \int_{\Gamma_0(T,\delta)}  \;\:C(t) e^{t\Sigma}\Big( 1 + \frac{t^2}{T^2} \Big)^M \frac{\dd t}{t-t_0}.
	\end{align}
	We now consider the limit $\delta \downarrow 0$, for fixed $t_0,T,\sigma,M$. Clearly, $\lim_{\delta\to 0^+} I_2(\delta,t_0,T,\Sigma,M)$ exists and is equal to 
	\begin{align}
		I_2(t_0,T,\Sigma,M) &= \frac{1}{2\pi i}  \int_{\Gamma_0(T,0)}  C_\Sigma(t)  e^{t\Sigma}\Big( 1 + \frac{t^2}{T^2} \Big)^M \frac{\dd t}{t-t_0}  \\ &= -\frac{1}{2\pi i}  \int_{-T}^{+T} C_\Sigma(i\tau) e^{i \tau \Sigma}\Big( 1 - \frac{\tau^2}{T^2} \Big)^M \frac{\dd \tau}{ \tau+  it_0}. 
		\label{eq:l39}
	\end{align}
	Similarly, since $\pm T \notin \operatorname{singsupp} C$ -- and therefore $C$ is continuous in some neighborhood of $\pm i T$ in $\overline{\bbH}_{\mathrm{R}}$, by \Cref{prop:smoothness} -- $\lim_{\delta\to 0^+} I_1(\delta,t_0,T,\Sigma,M)$ exists and is equal to 
	\begin{equation}
		I_1(t_0,T,\Sigma,M) = \frac{1}{2\pi i} \int_{\Gamma_+(T,0)}(C_\Sigma(t) - C(t)) e^{t\Sigma}\Big( 1 + \frac{t^2}{T^2} \Big)^M \frac{\dd t}{t-t_0}, 
		\label{eq:l310}
	\end{equation}
	which is well-defined as a Riemann integral. The $\delta\downarrow 0$ limit of $I_3$ requires a bit of care.
	Since $\pm T\notin \operatorname{singsupp}c$ (and singular supports are closed sets, by definition), we can find some $T_0 \in (0,T)$ such that  $[-T,+T] \cap \operatorname{singsupp} c \subseteq [-T_0,+T_0]$. Choose some 
	$\chi \in C_{\mathrm{c}}^\infty(\bbR)$ which is supported in the interior of $[-T,+T]$ and such that $\chi$ is identically equal to one on the slightly smaller interval $[-T+(T-T_0)/2,+T-(T-T_0)/2]$. Then we can write 
	\begin{align}
		I_3(\delta,t_0,T,\Sigma,M) &= -\frac{1}{2\pi i}  \int_{\Gamma_0(T,\delta)} \chi(\Im t) C(t)  e^{t\Sigma}\Big( 1 + \frac{t^2}{T^2} \Big)^M \frac{\dd t}{t-t_0}  \\ &-\frac{1}{2\pi i}  \int_{\Gamma_0(T,\delta)}  (1-\chi(\Im t)) C(t)  e^{t\Sigma}\Big( 1 + \frac{t^2}{T^2} \Big)^M \frac{\dd t}{t-t_0}.
		\label{eq:al9}
	\end{align}
	As $\delta\to 0^+$, the first term on the right-hand side converges to 
	\begin{equation}
		\frac{1}{2\pi i}\int_{-T}^{+T} \chi( \tau) c(\tau)  e^{i \tau\Sigma}\Big( 1 - \frac{\tau^2}{T^2} \Big)^M \frac{\dd \tau}{\tau+it_0} = \frac{1}{2\pi i} \Big\langle c(\tau), \chi(\tau) e^{i \tau\Sigma} \Big( 1 - \frac{\tau^2}{T^2} \Big)^M \frac{1}{\tau+ it_0} \Big\rangle_{\scrS'(\bbR_\tau)} 
		\label{eq:s6}
	\end{equation}
	by the definition of distibutional convergence, and the second term, (\ref{eq:al9}), converges to the well-defined Riemann integral 
	\begin{equation}
		\frac{1}{2\pi i} \int_{-T}^{+T} (1-\chi(\tau)) c(\tau)  e^{i \tau \Sigma}\Big( 1 - \frac{\tau^2}{T^2} \Big)^M \frac{\dd \tau }{\tau + it_0}
		\label{eq:s8} 
	\end{equation}
	since $C(t)$ is continuous on some neighborhood in $\overline{\bbH}_{\mathrm{R}}$ of the compact set $[-T,-T+(T-T_0)/2] \cup [+T-(T-T_0)/2,+T]$. Combining (\ref{eq:s6}) and (\ref{eq:s8}): $\lim_{\delta\to 0^+} I_3(\delta,t_0,T,\Sigma,M)$ exists (although this we could deduce from the limits already considered) and is equal to the well-defined distributional pairing
	\begin{equation}
		I_3(t_0,T,\Sigma,M) =  \frac{1}{2\pi i} \int_{-T}^{+T} c(\tau) e^{i \tau \Sigma} \Big( 1 - \frac{\tau^2}{T^2} \Big)^M \frac{\dd \tau}{\tau+ i t_0}.
	\end{equation}
	Combining what we have shown so far, 
	\begin{equation}
		(C_\Sigma(t_0) - C(t_0) ) e^{t_0 \Sigma} \Big( 1 + \frac{t_0^2}{T^2} \Big)^M  = I_1(t_0,T,\Sigma,M)+I_2(t_0,T,\Sigma,M)+I_3(t_0,T,\Sigma,M), 
		\label{eq:p0}
	\end{equation}
	where  $I_1,I_2,I_3$ are given by \cref{eq:l310}, \cref{eq:l39}, \cref{eq:al9}, respectively. 
	
	We now consider the limit $t_0 \to 0^+$, for fixed $T,\Sigma,M$. The limit $\lim_{t_0\to 0^+} I_1(t_0,T,\Sigma,M)$ clearly exists and is equal to
	\begin{equation}
		I_1(T,\Sigma,M) = \frac{1}{2\pi i} \int_{\Gamma_+(T,0)}  (C_\Sigma(t) - C(t)) e^{t\Sigma}\Big( 1 + \frac{t^2}{T^2} \Big)^M \frac{\dd t}{t}.  
		\label{eq:I11}
	\end{equation} 
	Since $C_\Sigma|_{i\bbR}$ (by entirety) and $c$ (by assumption) are both continuous in some neighborhood of the origin, $\lim_{t_0 \to 0^+} I_2(t_0,T,\Sigma,M)$ and $\lim_{t_0 \to 0^+} I_3(t_0,T,\Sigma,M)$ exist and are equal to the well-defined distributional pairings 
	\begin{align}
		I_2(T,\Sigma,M) &= -\frac{1}{2\pi i } \int_{-T}^{+T} C_\Sigma(i\tau) e^{i \tau \Sigma}\Big( 1 - \frac{\tau^2}{T^2} \Big)^M \frac{\dd \tau}{ \tau+  i0} \label{eq:I2},\\ 
		I_3(T,\Sigma,M) &= +\frac{1}{2\pi i} \int_{-T}^{+T} c(\tau) e^{i \tau \Sigma}\Big( 1 - \frac{\tau^2}{T^2} \Big)^M \frac{\dd \tau}{ \tau+  i0}. \label{eq:I3}
	\end{align}
	It follows from (\ref{eq:p0}) and the computations of the limits above that 
	\begin{align}
		C_\Sigma(0) = \lim_{t_0\to 0^+} C_\Sigma(t_0) e^{t_0 \Sigma} \Big( 1 + \frac{t_0^2}{T^2} \Big)^M  &=  \lim_{t_0\to 0^+} \Big[C(t_0) e^{t_0 \Sigma} \Big( 1 + \frac{t_0^2}{T^2} \Big)^M + \sum_{j=1}^3 I_j(t_0,T,\Sigma,M) \Big]  \\
		&= c(0)+ I_1(T,\Sigma,M) + I_2(T,\Sigma,M)+I_3(T,\Sigma,M).
	\end{align}
	Since $C_\Sigma(0) = A_\Sigma(0)- B_\Sigma(0)$, we have $A_\Sigma(0) = B_\Sigma(0) + c(0)+ I_1(T,\Sigma,M) + I_3(T,\Sigma,M)+I_2(T,\Sigma,M)$, 
	which is precisely the desired result, \cref{eq:r10}. 
\end{proof}

\subsection{Estimates of the errors}

We now bound the contribution to the integral $A_\Sigma$ coming from $I_1(T,\Sigma,M),I_2(T,\Sigma,M)$, given an \emph{a priori} polynomial bound 
\begin{equation}
	\gamma \in \cup_{K \geq 0} \langle \sigma \rangle^K L^\infty(\bbR_\sigma), 
	\label{eq:ass}
\end{equation}
so that $\gamma \in \langle \sigma \rangle^K L^\infty(\bbR_\sigma)$ for some real number $K \geq 0$. This weak bound (involving no detailed asymptotics of $\gamma$, only an upper bound of the absolute value) functions as the ``Tauberian hypothesis'' of the initial part of Newman's Tauberian argument (the part discussed in the present section). In the application to Weyl's law, this hypothesis is verified by Weyl's law with the standard remainder. (So the standard form of Weyl's law will be used with some assumptions on the half-wave trace in order to deduce a strengthened form of Weyl's law.) Allowing $K>0$ is typically not necessary (since we could always work with more heavily weighted $\alpha$), but it is convenient. The most important case is $K=0$. In this subsection only ``momentum-space'' -- i.e.\ $\bbR_\sigma$-side -- assumptions come into play. 

The $M=1,K=0$ case of the next proposition can be found at the end of \cite{Zagier}:

\begin{proposition}
	\label{prop:setup_base}
	Consider the setup of \Cref{prop:setup}, and suppose that (\ref{eq:ass}) holds, with $\gamma \in \langle \sigma \rangle^K L^\infty(\bbR_\sigma)$, $K\geq 0$. Then, if $M\geq K+1$, $I_1(T,\Sigma,M)$ and $I_2(T,\Sigma,M)$ obey the bounds
	\begin{equation}
		|I_1(T,\Sigma,M)|,|I_2(T,\Sigma,M)| \leq  R_{K,M} \Sigma^{K}\digamma /T
		\label{eq:both_bds}
	\end{equation}
	for $T\geq 1$ and sufficiently large $\Sigma>\Sigma_{K}>0$,
	where  $\digamma = \lVert \gamma \rVert_{\langle \sigma\rangle^K L^\infty(\bbR_\sigma)}$ and $R_{K,M}>0$ is some constant depending only on $K$ and $M$.
\end{proposition}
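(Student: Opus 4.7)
The plan is to bound $I_1$ directly on the right semicircle $\Gamma_+(T,0)$, and to handle $I_2$ by using Cauchy's theorem to deform its contour into the left semicircle $\Gamma_-(T,0)$, on which $e^{t\Sigma}$ decays. The key observation---and the reason for the regularizer $(1+t^2/T^2)^M$---is the identity $|1+t^2/T^2| = 2|\Re t|/T$ on the circle $\{|t|=T\}$, so that $|(1+t^2/T^2)^M/t| = 2^M|\Re t|^M/T^{M+1}$. This factor is engineered to cancel (to some order) the $|\Re t|^{-1}$ singularities that will appear when I estimate the Laplace transforms using the polynomial-growth hypothesis on $\gamma$.

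For $I_1$, I rewrite $(C(t)-C_\Sigma(t))e^{t\Sigma} = \int_0^\infty \gamma(\Sigma+u)e^{-ut}\dd u$; on $\Gamma_+(T,0)$ the integrand in $u$ is exponentially damped in $\Re t>0$. Using $\langle\Sigma+u\rangle^K \leq C_K(\Sigma^K + u^K)$ and integrating termwise yields
\[
|(C(t)-C_\Sigma(t))e^{t\Sigma}| \leq C_K\digamma\bigl(\Sigma^K(\Re t)^{-1} + (\Re t)^{-K-1}\bigr)
\]
for $\Sigma\geq 1$. Parametrizing $t=Te^{i\theta}$ with $\theta\in(-\pi/2,\pi/2)$ (so $\Re t=T\cos\theta$ and $|\dd t/t| = \dd\theta$) and using $|(1+t^2/T^2)^M/t| = (2\cos\theta)^M/T$, the integrand of $|I_1|$ is bounded by a constant multiple of $\digamma\bigl(\Sigma^K\cos^{M-1}\theta + T^{-K}\cos^{M-K-1}\theta\bigr)/T$. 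Under the hypothesis $M\geq K+1$, both angular integrals converge (the first needs only $M\geq 1$; the second is the binding constraint). Since $T^{-K}\leq \Sigma^K$ for $T,\Sigma\geq 1$, this gives $|I_1|\leq R_{K,M}\digamma\Sigma^K/T$.

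For $I_2$, the substitution $t=i\tau$ together with the $\tau+i0$ prescription (upper half-plane in $\tau$) sends the integration path to the segment from $-iT$ to $iT$ along the imaginary axis, detoured slightly to the left of $t=0$. Since $C_\Sigma$ is entire, the integrand is meromorphic in $\{|t|\leq T\}$ with a unique simple pole at $t=0$. I close the contour by adding $\Gamma_-(T,0)$, traversed from $iT$ back to $-iT$ through the left half-plane; the resulting closed loop lies in the closed left half-disk and its detour around $0$ is oriented so that the winding number about $0$ is zero. By Cauchy's theorem, $\int_{\text{main}} = -\int_{\Gamma_-(T,0)}$. On $\Gamma_-(T,0)$, rewriting $C_\Sigma(t)e^{t\Sigma} = \int_0^\Sigma \gamma(\Sigma-u)e^{ut}\dd u$ and using $\Re t<0$ yields the clean pointwise bound $|C_\Sigma(t)e^{t\Sigma}|\leq C_K\digamma\Sigma^K|\Re t|^{-1}$. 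Multiplying by the same angular weight and integrating produces $\int|\cos\theta|^{M-1}\dd\theta$, finite already for $M\geq 1$, giving the matching bound $|I_2|\leq R_{K,M}\digamma\Sigma^K/T$.

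The main obstacles are (a) correctly identifying the side of $t=0$ on which the $I_2$ contour passes, since deforming on the wrong side would pick up the residue $C_\Sigma(0) = \int_0^\Sigma \gamma(\sigma)\dd\sigma$, a quantity of size $O(\digamma\Sigma^{K+1})$ that would destroy the claimed $\Sigma^K/T$ bound; and (b) the integration by parts (carried out via the substitutions $u=\sigma\pm\Sigma$ above), which converts the crude $\Sigma^{K+1}$-growth of $\int_0^\Sigma\langle\sigma\rangle^K\dd\sigma$ into only an $|\Re t|^{-1}$ singularity---precisely what one factor of $(1+t^2/T^2)$ neutralizes on $|t|=T$---so that the remaining $M-1$ factors provide enough angular suppression for the integrals to converge whenever $M\geq K+1$.
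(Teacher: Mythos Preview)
Your proof is correct and follows essentially the same approach as the paper: the crucial identity $|1+t^2/T^2|=2|\Re t|/T$ on $|t|=T$, a direct tail estimate for $I_1$ on the right semicircle, and a contour deformation of $I_2$ to the left semicircle $\Gamma_-(T,0)$ where $e^{t\Sigma}$ is damped. The only differences are cosmetic: you absorb the $e^{t\Sigma}$ factor via the substitutions $u=\sigma-\Sigma$ (for $I_1$) and $u=\Sigma-\sigma$ (for $I_2$), which streamlines the estimates slightly compared to the paper's direct bounds on $\int_\Sigma^\infty \sigma^K e^{-\sigma\Re t}\,d\sigma$; and you deform $I_2$ starting from its $\tau+i0$ form, whereas the paper returns to the pre-limit expression $I_2(\delta,t_0,T,\Sigma,M)$ over $\Gamma_0(T,\delta)$ and deforms before taking $\delta,t_0\to 0^+$. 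Both routes give the same semicircular integral and the same bound.
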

\begin{proof}
	We handle $I_1,I_2$ in that order. In the following, $R$ (with or without subscripts) will denote an arbitrary constant (which will change line to line). One computation that appears in both bounds  in \Cref{eq:both_bds} is that, on the circle $T\bbS^1=\{t\in \bbC:|t|=T\}$ of radius $T>0$, $|1+t^2/T^2| = |2\Re t/T|$.
	\begin{enumerate}
		\item  On $\Gamma_+(T) \backslash\{\pm i T\}=\Gamma_+(T,0)\backslash\{\pm i T\}$, 
		\begin{equation}
			|C_\Sigma(t)-  C(t)| = \Big| \int_{\Sigma}^\infty \gamma(\sigma) e^{-\sigma t} \dd \sigma  \Big| \leq  \digamma \int_\Sigma^\infty \langle \sigma \rangle^K e^{-\sigma \Re t}  \dd \sigma. 
		\end{equation} 
		For sufficiently large $\Sigma_K>0$, for all $\Sigma> \Sigma_K$ we can bound $\langle \sigma \rangle^K$ for $\sigma\geq \Sigma$ above by $2 \sigma^K$, so
		\begin{equation}
			|C_\Sigma(t)-  C(t)| \leq  2\digamma  \int_{\Sigma}^\infty \sigma^K e^{-\sigma \Re t} \dd \sigma = 2\digamma  \frac{1}{(\Re t)^{K+1}} \int_{\Sigma \Re t}^\infty \sigma^K e^{-\sigma } \dd \sigma
			\label{eq:e93}
		\end{equation}
		for such $\Sigma$. 
		If $K$ is a nonnegative integer, we have the explicit computation
		\begin{equation}
			\int_{\Sigma \Re t}^\infty \sigma^K e^{-\sigma } \dd \sigma = 
			\sum_{k=0}^K k! \binom{K}{k} (\Sigma \Re t)^{K-k} e^{- \Sigma \Re t} \leq R_K e^{- \Sigma \Re t} \langle \Sigma \Re t\rangle^K.
		\end{equation}
		If $K$ is not an integer, then, observing that $\sup_{\sigma\geq \Sigma \Re t} (1+\sigma^{\lceil K \rceil})^{-1} \sigma^K = O(\langle \Sigma \Re t \rangle^{K-\lceil K \rceil}) $,  we have 
		\begin{align}
			\int_{\Sigma \Re t}^\infty \sigma^K e^{-\sigma } \dd \sigma &= \int_{\Sigma \Re t}^\infty \Big( \frac{\sigma^K}{1+\sigma^{ \lceil K \rceil }} \Big) (1+\sigma^{\lceil K \rceil}) e^{-\sigma } \dd \sigma \\ 
			&\leq \sup_{\sigma\geq \Sigma \Re t}\{ (1+\sigma^{\lceil K \rceil})^{-1} \sigma^K\}  \int_{\Sigma \Re t}^\infty  (1+\sigma^{\lceil K \rceil}) e^{-\sigma } \dd \sigma \\
			&\leq R_K e^{- \Sigma \Re t} \langle \Sigma \Re t \rangle^{K}.
		\end{align}
		So, regardless of whether or not $K$ is an integer, $\int_{\Sigma \Re t}^\infty \sigma^K e^{-\sigma} \dd \sigma \leq R_K e^{-\Sigma \Re t} \langle \Sigma \Re t \rangle^K$. 
		
		Plugging this into \cref{eq:e93}, $|C_\Sigma(t) - C(t)| \leq 2 \digamma R_K (\Re t)^{-K-1} e^{-\Sigma \Re t} \langle \Sigma \Re t \rangle^K$.
		
		Now, 
		\begin{equation}
			\Big| e^{t \Sigma} \Big( 1 + \frac{t^2}{T^2} \Big)^M \frac{1}{t} \Big| = e^{ \Sigma\Re t} \cdot \Big(\frac{2 \Re t}{T} \Big)^M \cdot \frac{1}{T}
		\end{equation}
		on $\Gamma_+(T)$. So, we have the following bound on the integrand of $I_1(T,\Sigma,M)$:
		\begin{equation}
			\Big| (C_\Sigma(t) - C(\sigma))e^{t \Sigma} \Big( 1 + \frac{t^2}{T^2} \Big)^M \frac{1}{t} \Big| \leq 2^{M+1} \digamma R_K (\Re t)^{M-1-K} \langle \Sigma \Re t \rangle^K  \frac{1}{T^{M+1}}.
			\label{eq:3g6}
		\end{equation}
		Under the assumption that $M\geq 1+K$, $(\Re t)^{M-1-K} \leq \langle \Re t \rangle^{M-1-K}$ on $\Gamma_+(T)$, so the right-hand side above is bounded above by $R_{K,M} \digamma  \langle \Sigma \rangle^K \langle \Re t \rangle^{M-1} / T^{M+1}$. Redefining $R_{K,M}$, this is bounded above by $R_{K,M} \digamma  \langle \Sigma \rangle^K / T^2$. For $\Sigma$ sufficiently large, this is bounded above by $R_{K,M} \digamma \Sigma^K / T^2$, for some new $R_{K,M}$. 
		
		It follows that $I_1(T,\Sigma,M)$ is bounded above in absolute value by $R_{K,M} \Sigma^{K}\digamma /T$, as claimed, for some new $R_{K,M}$. 
		\item Since $C_\Sigma$ (along with the rest of the integrand of $I_2(\delta,t_0,T,\Sigma,M)$, for $\delta>0$) is analytic to the left of $t_0$, we can deform the contour $\Gamma_0(T,\delta)$ to the (slightly extended) semicircle $\Gamma_-(\delta)=\{t\in \bbC: |t|=T, \Re t\leq \delta\}$ oriented counterclockwise:
		\begin{equation}
			I_2(\delta,t_0,T,\Sigma,M) = \frac{1}{2\pi i} \int_{\Gamma_-(T,\delta)}C_\Sigma(t) e^{t\Sigma} \Big( 1 + \frac{t^2}{T^2} \Big)^M \frac{\dd t}{t-t_0}. 
		\end{equation}
		Obviously, we can take $\delta\to 0^+$, so $I_2(t_0,T,\Sigma,M) = (2\pi i)^{-1} \int_{\Gamma_-(T)}C_\Sigma(t) e^{t\Sigma} (1+t^2/T^2)^M (t-t_0)^{-1}\dd t$, where $\Gamma_-(T)=\Gamma_-(T,0) = \{t\in \bbC : |t|=T, \Re t\leq 0\}$, oriented as before. Since $\Gamma_-(T)$ is bounded away from the origin of the complex plane, the limit of the integral as $t_0\to 0^+$ is well-defined (as a Riemann integral, not just as a distributional pairing) and equal to the contour integral 
		\begin{equation}
			I_2(T,\Sigma,M) =  \frac{1}{2\pi } \int_{\Gamma_-(T)}C_\Sigma(t) e^{t\Sigma} \Big( 1 + \frac{t^2}{T^2} \Big)^M \frac{\dd t}{t}. 
			\label{eq:33f}
		\end{equation}
		On $\Gamma_-(T)$, $t\notin i\bbR$, we can bound 
		\begin{equation}
			|C_\Sigma(t) | = \Big| \int_0^\Sigma \gamma(\sigma) e^{-\sigma t} \dd \sigma \Big| \leq \digamma \int_{0}^\Sigma \langle \sigma \rangle^K e^{+\sigma |\Re t|} \dd \sigma. 
		\end{equation}
		For sufficiently large $\Sigma_K$ (and for some $R_K>0$), for all $\Sigma > \Sigma_K$ we can bound the right-hand side by  $|\Re t|^{-1}\cdot \digamma R_{K} e^{\Sigma |\Re t|} \Sigma^{K} $. And:
		\begin{equation}
			\Big| e^{t \Sigma} \Big( 1 + \frac{t^2}{T^2} \Big)^M \frac{1}{t} \Big| = e^{-\Sigma|\Re t|} \cdot \Big(\frac{2 |\Re t|}{T} \Big)^M \cdot \frac{1}{T}\leq  e^{-\Sigma|\Re t|} \cdot \frac{2^{M} |\Re t|}{T^2}
		\end{equation}
		on $\Gamma_-(T)$.  So, the right-hand side of \cref{eq:33f}, which is an integral of a continuous function along a contour of length $\pi T$, is bounded above in absolute value by $R_{K,M} \Sigma^{K} \digamma/T$, as claimed.
	\end{enumerate}
\end{proof}

We now turn to 
\begin{equation} 
	I_3(T,\Sigma,M) =  \frac{1}{2\pi i} \int_{-T}^{+T} c(\tau) e^{i \tau \Sigma} \Big( 1 - \frac{\tau^2}{T^2} \Big)^M \frac{\dd \tau}{\tau + i0}. 
\end{equation} 
which is evidently where the regularity of $c$ becomes relevant. 

\begin{proposition}
	\label{prop:setup2}
	Consider the setup of \Cref{prop:setup},  $T,-T\notin\operatorname{singsupp} c$. If $Q(\tau)=\tau^{-1}c(\tau)$ is in $L^{1,N}[-T,+T]$ for $N\in \bbN$ and $M\geq N$, $I_3(T,\Sigma,M)$ obeys the bound 
	\begin{equation}
		|I_3(T,\Sigma,M)| \leq R_{N,M} \Sigma^{-N} \lVert Q \rVert_{ L^{1,N}[-T,+T]}, 
		\label{eq:3g9}
	\end{equation}
	for $T\geq 1$, 
	where $R_{N,M}>0$ is a constant depending only on $N$, $M$. 
\end{proposition}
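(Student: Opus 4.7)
The plan is to reinterpret $I_3$ as an ordinary $L^1$ integral, and then to extract $N$ powers of $1/\Sigma$ by integration by parts against the oscillation $e^{i\tau\Sigma}$, using the hypothesis $M\geq N$ to guarantee that all the derivatives involved stay integrable.

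First, I would observe that the assumption $Q(\tau)=\tau^{-1}c(\tau)\in L^{1,N}[-T,+T]$ together with the continuity of $c$ near $\tau=0$ (from the setup of \Cref{prop:setup}) forces $c(0)=0$: otherwise $c(\tau)/\tau$ could not be in $L^1$ in a neighborhood of the origin. Consequently, the Plemelj-type distribution $(\tau+i0)^{-1}=\mathrm{PV}(1/\tau)-i\pi\delta(\tau)$, when paired against the compactly supported distribution $c(\tau)e^{i\tau\Sigma}(1-\tau^2/T^2)^M$, has no contribution from the $\delta$, and the principal value simply coincides with the honest Lebesgue integral of $Q$. Hence I may rewrite
\begin{equation}
I_3(T,\Sigma,M)= \frac{1}{2\pi i}\int_{-T}^{+T} Q(\tau) \, e^{i\tau\Sigma}\Big(1-\frac{\tau^2}{T^2}\Big)^{\! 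M}\dd\tau.
\end{equation}

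Next, I would introduce the cutoff $\phi_T(\tau)=(1-\tau^2/T^2)^M\mathbf{1}_{[-T,+T]}(\tau)$, which vanishes to order $M$ at $\tau=\pm T$ and therefore extends by zero to a $C^{M-1}(\bbR)$ function with compact support. A rescaling $u=\tau/T$ gives $\phi_T^{(j)}(\tau)=T^{-j}\,(\partial_u^j (1-u^2)^M)|_{u=\tau/T}$, so that $\lVert\phi_T^{(j)}\rVert_{L^\infty(\bbR)}\leq C_{j,M} T^{-j}\leq C_{j,M}$ for all $0\leq j\leq M$ and $T\geq 1$, with $C_{j,M}$ depending only on $j,M$. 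Because $M\geq N$, the product $Q\phi_T$ has $N$ weak derivatives in $L^1(\bbR)$ (any derivatives falling on $\phi_T$ produce uniformly bounded factors; any derivatives falling on $Q$ are controlled by $\lVert Q\rVert_{L^{1,N}}$). I can therefore integrate by parts $N$ times, with no boundary contribution since $\phi_T$ vanishes to order $M\geq N$ at $\pm T$, to obtain
\begin{equation}
I_3(T,\Sigma,M)=\frac{(-1)^N}{2\pi i\,(i\Sigma)^N}\int_{-T}^{+T} (Q\phi_T)^{(N)}(\tau)\, e^{i\tau\Sigma}\dd\tau.
\end{equation}

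Expanding via the Leibniz rule, applying the triangle inequality, and using $|e^{i\tau\Sigma}|=1$ then yields
\begin{equation}
|I_3(T,\Sigma,M)|\leq \frac{\Sigma^{-N}}{2\pi}\sum_{k=0}^N \binom{N}{k} \lVert\phi_T^{(N-k)}\rVert_{L^\infty[-T,+T]}\,\lVert Q^{(k)}\rVert_{L^1[-T,+T]}\leq R_{N,M}\,\Sigma^{-N}\,\lVert Q\rVert_{L^{1,N}[-T,+T]},
\end{equation}
with $R_{N,M}$ depending only on $N$ and $M$ (in particular, independent of $T\geq 1$).

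The main point of care, and the only place where the $+i0$ prescription actually matters, is the very first step: verifying that the distributional integral collapses to a Lebesgue integral of $Q$, via the necessity of $c(0)=0$. Everything after that is a standard oscillatory-integral integration-by-parts argument, with the condition $M\geq N$ serving precisely to provide the regularity of the cutoff needed for the procedure.
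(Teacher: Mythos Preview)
Your proof is correct and follows essentially the same route as the paper: rewrite $I_3$ as an ordinary integral of $Q$ against the cutoff $(1-\tau^2/T^2)^M$, integrate by parts $N$ times (the condition $M\geq N$ killing the boundary terms), expand by Leibniz, and bound the derivatives of the cutoff via the rescaling $\tau\mapsto \tau/T$. The only difference is that you spell out why the $+i0$ prescription collapses to a Lebesgue integral (via $c(0)=0$), which the paper leaves implicit.
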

\begin{proof}
	In terms of $Q$, $I_3(T,\Sigma,M)$ is given by 
	\begin{equation}
		I_3(T,\Sigma,M) = \frac{1}{2\pi i} \int_{-T}^{+T} Q(\tau) e^{i \tau \Sigma} \Big(1- \frac{\tau^2}{T^2} \Big)^M \dd \tau.  
	\end{equation}
	One of our hypotheses is that $c$ is smooth in some neighborhoods of $\pm T$, so we can integrate by parts. Fixing $M\geq N$, 
	\begin{equation}
		I_3(T,\Sigma,M) =  \frac{1}{2\pi i} \frac{1}{(i \Sigma)^N} \int_{-T}^{+T} Q(\tau) (\partial_\tau^N e^{i\tau \Sigma} ) \Big(1- \frac{\tau^2}{T^2} \Big)^M \dd \tau = \sum_{n=0}^N J_n, 
		\label{eq:integration_by_parts_formula}
	\end{equation}
	where, for $n=0,\ldots,N$, $J_n=J_n(T,\Sigma,M)$ is given by 
	\begin{equation}
		J_n = (-1)^{N} \frac{1}{2\pi i} \binom{N}{n} \frac{1}{(i\Sigma)^N} \int_{-T}^{+T} e^{i\tau \Sigma}(\partial_\tau^{N-n} Q(\tau))  \partial_\tau^n \Big(1-\frac{\tau^2}{T^2}\Big)^M \dd \tau.
	\end{equation}
	Note the absence of boundary terms when performing the integration by parts --- this follows from our assumption that $M\geq N$. We then have 
	\begin{align}
	\begin{split} 
	\Big|  \int_{-T}^{+T} e^{i\tau \Sigma}(\partial_\tau^{N-n} Q(\tau))  \partial_\tau^n \Big(1-\frac{\tau^2}{T^2}\Big)^M \dd \tau \Big| &\leq \lVert \partial_\tau^{N-n} Q(\tau) \rVert_{L^1[-T,+T]} \Big\lVert  \partial_\tau^n \Big(1-\frac{\tau^2}{T^2}\Big)^M \Big\rVert_{L^{\infty}[-T,+T]} \\
	&= \lVert (\partial_\tau^{N-n} Q(\tau)) \rVert_{L^1[-T,+T]} \lVert  T^{-n}\partial_\tau^n (1-\tau )^M\rVert_{L^{\infty}[-1,+1]} \\
	&= T^{-n}\lVert (\partial_\tau^{N-n} Q(\tau)) \rVert_{L^1[-T,+T]} \lVert  \partial_\tau^n (1-\tau)^M\rVert_{L^{\infty}[-1,+1]} \\
	&\leq  T^{-n} \lVert  Q \rVert_{L^{1,N}[-T,+T]} \lVert  \partial_\tau^n (1-\tau )^M \rVert_{L^{\infty}[-1,+1]}.
	\end{split} 
	\end{align} 
	Consequently, $\smash{|J_n| \leq R_{N,M,n} \Sigma^{-N} T^{-n} \lVert Q^{(N-n)} \rVert_{L^1[-T,+T]}}$ for some constant $R_{N,M,n}>0$. Adding up the contributions from each $n=0,\ldots,N$, we conclude \cref{eq:3g9}.
\end{proof}

\subsection{Contribution from the origin}
\label{subsec:origin}

If $a$ has a pole at the origin but no constant term, that is if
\begin{equation}
	a(\tau) = \sum_{j=1}^J a_j \frac{1}{(\tau-i0)^j} \bmod \tau C^\infty(-\epsilon,+\epsilon)
\end{equation}
in some neighborhood $(-\epsilon,+\epsilon)$ of $\tau = 0$, then we can find a polynomial $Z(\sigma) \in \bbC[\sigma]$ whose Fourier transform is $a_J(\tau-i0)^{-J}+\cdots+a_1(\tau-i0)^{-1}$. For this, we note that 
\begin{equation}
	\calF : \{\Theta(\sigma) Z(\sigma)\in \bbC[\sigma]:\operatorname{deg} Z \leq J-1\} \to \{ a_J(\tau-i0)^{-J}+\cdots+a_1(\tau-i0)^{-1}:a_J,\ldots,a_1\in \bbC\}
\end{equation}
is bijective.  Since we would like to be explicit in stating asymptotics, we record the exact correspondence:
\begin{proposition}
	\label{prop:pole_calc} 
	Given $J\in \bbN^+$ and $a_J,\ldots,a_1 \in \bbC$, the unique $Z(\sigma)\in \bbC[\sigma]$ such that 
	\begin{equation}
		\calF(\Theta Z)(\tau) = \sum_{j=1}^J a_j \frac{1}{(\tau-i0)^j} 
	\end{equation}
	is given by $Z(\sigma) = \sum_{j=0}^{J-1} i^{j+1} a_{j+1} \sigma^j/j!$.
\end{proposition}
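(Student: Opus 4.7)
The proof plan is a direct computation with no substantive obstacle --- only careful bookkeeping of signs and factors of $i$. The idea is to use linearity of the Fourier transform to reduce to computing $\calF(\Theta\sigma^j)$ for each monomial $\sigma^j$ with $j\in\{0,\ldots,J-1\}$, and then to solve for the coefficients of $Z$ so that $\calF(\Theta Z)$ matches the prescribed Laurent data $a_J,\ldots,a_1$.

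First I would compute $\calF(\Theta)$. Writing $\Theta(\sigma) = \lim_{\epsilon\to 0^+} e^{-\epsilon\sigma}\Theta(\sigma)$ in $\scrS'(\bbR_\sigma)$ (or, equivalently, invoking \Cref{lem:bdy} with $\alpha=\Theta$ and computing the Fourier--Laplace transform directly), each truncation's Fourier transform is the honest Lebesgue integral $(i\tau+\epsilon)^{-1}$, and the $\scrS'$-limit is $-i(\tau-i0)^{-1}$. Then, using the identity $\calF(\sigma^j\alpha)(\tau)=i^j\partial_\tau^j\calF(\alpha)(\tau)$ together with $\partial_\tau^j(\tau-i0)^{-1}=(-1)^j j!\,(\tau-i0)^{-j-1}$, one obtains
\[
\calF(\Theta\sigma^j)(\tau) \;=\; \frac{(-i)^{j+1}\,j!}{(\tau-i0)^{j+1}}.
\]

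The remaining step is bookkeeping. The linear map $Z(\sigma)=\sum_{j=0}^{J-1} c_j\sigma^j \mapsto \calF(\Theta Z)(\tau)$ sends the monomial basis to a basis of the space of admissible principal parts, and is in particular a bijection between polynomials of degree at most $J-1$ and the $J$-dimensional space $\{\sum_{j=1}^{J} a_j(\tau-i0)^{-j}:a_1,\ldots,a_J\in\bbC\}$. Matching coefficients $c_j\cdot(-i)^{j+1}j! = a_{j+1}$ and using $(-i)^{-(j+1)}=i^{j+1}$ yields $c_j = i^{j+1}a_{j+1}/j!$, which is precisely the formula asserted for $Z$. Uniqueness follows from the bijectivity just noted (equivalently, from injectivity of $\calF$ on $\scrS'$).
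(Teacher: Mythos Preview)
Your proof is correct and follows essentially the same approach as the paper's: both compute $\calF\Theta=-i(\tau-i0)^{-1}$, then use the intertwining of multiplication by $\sigma$ with $i\partial_\tau$ (the paper phrases this as $\calF(\Theta Z)=Z(D)\calF\Theta$ with $D=i\partial$) to obtain $\calF(\Theta\sigma^j)=(-i)^{j+1}j!\,(\tau-i0)^{-j-1}$, and match coefficients. The only cosmetic difference is that you justify $\calF\Theta$ via the $\epsilon\to0^+$ limit explicitly, whereas the paper simply recalls the formula.
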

\begin{proof}
	We see that $\calF (\Theta Z) = Z(D)  \calF \Theta $, where $D=  i \partial$. (So $Z(D)$ is a differential operator.) We recall that $\calF \Theta = -i (\tau-i0)^{-1}$. It follows that $Z$ has degree $J-1$, so $Z(\sigma)=b_{J-1} \sigma^{J-1}+\cdots+b_0$ for some $b_0,\ldots,b_{J-1} \in \bbC$. 
	\begin{equation}
		Z(D) \calF \Theta = \sum_{j=1}^J i^{j}(-1)^{j}(j-1)!\cdot b_{j-1} \frac{1}{(\tau -i0)^j}.
	\end{equation}
	We conclude that $b_j =  i^{j+1}  a_{j+1} / j!$ for each $j=0,\ldots,J-1$. 
\end{proof}
The analogous formula holds for nonintegral $j$, in which case we interpret $i^j$ as $e^{\pi  i j/2}$ and $j!$ as $\Gamma(j+1)$ --- cf.\ \cite[Vol.\ 1, pg. 359]{GS}. (Note that in \cite{GS}, the authors use the reverse of our conventions for $\calF,\calF^{-1}$.) 
\subsection{Conclusion} 
\begin{theorem}
	\label{thm:Ingham} 
	Suppose that $\alpha \in \scrS'(\bbR^{\geq 0}) \cap L^1_{\mathrm{loc}}(\bbR^{\geq 0})$, and let $a= \calF \alpha$, $\chi_0 \in C_{\mathrm{c}}^\infty(\bbR^+)$ with $\int_0^\infty \chi_0(\sigma) \dd \sigma = 1$. 
	
	Suppose further that we are given some distributions $a_0,b,c,g,p \in \scrS'(\bbR)$ of the form 
	\begin{enumerate}
		\item $\smash{a_0(\tau) = \sum_{j \in \calJ} a_{0}^{(j)} (\tau-i0)^{-j}}$ for some finite subset $\calJ\subset \bbR^{+}$ of positive real numbers and some indexed collection $\smash{\{a_0^{(j)}\}_{j\in \calJ}\subset \bbC}$,
		\item $b = \calF \beta$ for $\beta \in  \scrS'(\bbR^{\geq 0})\cap L^1_{\mathrm{loc}}(\bbR^{\geq 0})$, 
		\item $c= \calF\gamma$ for $\gamma \in  \scrS'(\bbR^{\geq 0})\cap L^1_{\mathrm{loc}}(\bbR^{\geq 0})$, such that $c$ is continuous in some neighborhood of the origin, 
		\item $g\in  \tau L^{1,N}_{\mathrm{loc}}(\bbR_\tau)$ with discrete singular support, for some $N\in \bbN$, 
		\item $p \in \scrS'(\bbR)$  with discrete singular support not containing the origin, vanishing at the origin, 
	\end{enumerate} 
	such that $a(\tau)=a_{00} \calF \chi_0 + a_0(\tau)+b(\tau)+c(\tau)$ for some $a_{00} \in \bbC$, where $c=g+p$ and $\gamma + \calF^{-1} p \in \langle \sigma \rangle^K L^\infty(\bbR_\sigma)$ for some $K\geq 0$ (Tauberian hypothesis). Suppose moreover that we have a nondecreasing continuous function $\frakR:\bbR^+\to \bbR^+$ such that 
	\begin{itemize}
		\item $g(\tau)$ satisfies $\lVert \tau^{-1} g(\tau) \rVert_{L^{1,N}[-T,+T]} \leq \frakR(T)$ for all $T\geq 1$ and
		\item $p$ satisfies 
		\begin{equation}
			\sup_{\Sigma>1,T>1} \frac{\Sigma^N }{ \frakR(T)} \Big|\int_{-T}^{+T} \frac{p(\tau)}{\tau}  e^{i \tau \Sigma} \Big(1 - \frac{\tau^2}{T^2} \Big)^M  \dd \tau \Big| < \infty 
			\label{eq:exa}
		\end{equation}
	for some $M \geq \operatorname{max}\{K+1,N\}$. 
	\end{itemize} 
	Then, for any $\Lambda>0$, 
	\begin{equation}
		\int_0^\Sigma \alpha(\sigma) \dd \sigma  = a_{00}+ \sum_{j\in \calJ, j\neq 0} \frac{e^{i\pi j/2}}{j!} a_0^{(j)} \Sigma^{j} + \int_0^\Sigma \beta(\sigma) \dd \sigma  + O(\Sigma^{K}  \calR(\Sigma)^{-1})
		\label{eq:g34}
	\end{equation}
	as $\Sigma\to\infty$, where $\calR(\Sigma)= \max\{1,\sup\{T>0 : T  \frakR(T) \leq \Lambda \Sigma^{N-K} \}\}$. 
\end{theorem}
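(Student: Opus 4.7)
The plan is to apply \Cref{prop:setup} to the grouped decomposition $a = \tilde{b} + c$ with $\tilde{b} = a_{00}\calF\chi_0 + a_0 + b$ playing the role of the ``explicit'' side (whose inverse Fourier transform is computable in closed form) and $c = g + p$ playing the role of the ``error'' side. First I verify that $c$ is continuous at the origin with $c(0) = 0$: $c$ is continuous near $0$ by hypothesis, $p$ is smooth near $0$ (its singular support missing $0$) and vanishes there, so $g = c - p$ is continuous near $0$, and the factorization $g = \tau \cdot (\tau^{-1}g)$ with $\tau^{-1}g \in L^{1,N}_{\mathrm{loc}}$ forces $g(0)=0$. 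Choosing any $T \geq 1$ with $\pm T \notin \operatorname{singsupp}(c)$ (a discrete exclusion), \Cref{prop:setup} produces
\[
\int_0^\Sigma \alpha(\sigma)\dd\sigma = \int_0^\Sigma \tilde\beta(\sigma)\dd\sigma + I_1(T,\Sigma,M) + I_2(T,\Sigma,M) + I_3(T,\Sigma,M).
\]

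\textbf{Main terms.} For $\Sigma$ exceeding $\sup\operatorname{supp}\chi_0$, the piece $a_{00}\chi_0$ of $\tilde\beta$ integrates to $a_{00}$, while by \Cref{prop:pole_calc} (and its non-integer-$j$ extension, in which $i^j$ is read as $e^{i\pi j/2}$ and $j!$ as $\Gamma(j+1)$) the piece $\calF^{-1}a_0$ integrates to $\sum_{j \in \calJ}(e^{i\pi j/2}/j!)a_0^{(j)}\Sigma^j$. The main terms of \cref{eq:g34} are thus in place, and it remains only to show $|I_1|+|I_2|+|I_3| = O(\Sigma^K/\calR(\Sigma))$.

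\textbf{Error estimates and balancing.} For $I_1, I_2$ I invoke \Cref{prop:setup_base}: the inverse Fourier transform of the $c$-side is $\gamma + \calF^{-1}p$, which the Tauberian hypothesis places in $\langle \sigma \rangle^K L^\infty$; since $M \geq K+1$, this yields $|I_1|+|I_2| \leq R_{K,M}\Sigma^K/T$ up to a constant depending on that norm. For $I_3$ I split along $c = g + p$: the $g$-piece is controlled by \Cref{prop:setup2} (using $M \geq N$) by $R_{N,M}\Sigma^{-N}\lVert \tau^{-1}g\rVert_{L^{1,N}[-T,+T]} \leq R_{N,M}\Sigma^{-N}\frakR(T)$, while the $p$-piece is bounded directly by hypothesis \cref{eq:exa} by a constant times $\Sigma^{-N}\frakR(T)$. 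Summing, the total error is bounded by a constant times $\Sigma^K/T + \Sigma^{-N}\frakR(T)$. To close, I set $T = \calR(\Sigma)$ (perturbed slightly to dodge the discrete $\operatorname{singsupp}(c)$; the perturbation is harmless because $\frakR$ is continuous and nondecreasing). The first summand is exactly $\Sigma^K/\calR(\Sigma)$. For the second, the defining inequality $T\frakR(T) \leq \Lambda\Sigma^{N-K}$ yields $\frakR(\calR(\Sigma)) \leq \Lambda\Sigma^{N-K}/\calR(\Sigma)$, hence $\Sigma^{-N}\frakR(T) \leq \Lambda\Sigma^{-K}/\calR(\Sigma) \leq \Lambda\Sigma^K/\calR(\Sigma)$ since $K \geq 0$ and $\Sigma \geq 1$. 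Altogether the error is $O(\Sigma^K\calR(\Sigma)^{-1})$, establishing \cref{eq:g34}.

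\textbf{Expected obstacle.} With \Cref{prop:setup,prop:setup_base,prop:setup2,prop:pole_calc} already doing the analytic work, what remains is mostly bookkeeping. The subtlest points I expect are (i) pinning down $c(0)=0$ from the mixed regularity hypotheses on $g$ and $p$, (ii) reading the Tauberian condition ``$\gamma + \calF^{-1}p \in \langle\sigma\rangle^K L^\infty$'' as a statement about the inverse Fourier transform of the full $c$-side so that \Cref{prop:setup_base} applies without modification, and (iii) arranging the choice $T \approx \calR(\Sigma)$ consistently with the singular-support exclusion; none is a genuine obstruction but each demands care.
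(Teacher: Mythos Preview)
Your proposal is correct and follows essentially the same route as the paper: apply \Cref{prop:setup} with $a_{00}\calF\chi_0+a_0+b$ playing the role of $b$, compute the explicit integrals via \Cref{prop:pole_calc}, bound $I_1,I_2$ by \Cref{prop:setup_base} and $I_3$ (split along $c=g+p$) by \Cref{prop:setup2} together with \cref{eq:exa}, then balance by taking $T\approx\calR(\Sigma)$. Your explicit verification that $c(0)=0$ (which the paper leaves tacit) and your handling of the singular-support exclusion via continuity of $\frakR$ are both sound; the only wrinkle is the notational slip ``the inverse Fourier transform of the $c$-side is $\gamma+\calF^{-1}p$''---by item (3) that inverse transform is simply $\gamma$, and what \Cref{prop:setup_base} needs is $\gamma\in\langle\sigma\rangle^K L^\infty$, so the Tauberian hypothesis as written in the statement appears to carry a redundant $+\calF^{-1}p$ (your instinct to read it as a condition on $\calF^{-1}c$ is the right one).
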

\begin{proof}
	We apply \Cref{prop:setup} with $a_{00} \calF\chi_0 +a_0(\tau)+b(\tau)$ in place of $b(\tau)$. Then, for any $T\notin \operatorname{singsupp} c$,
	\begin{multline}
		\int_0^\Sigma \alpha(\sigma) \dd \sigma = \int_0^\Sigma \Big(  a_{00} \chi_0(\sigma) +\calF^{-1} a_0 (\sigma) +\beta(\sigma)\Big) \dd \sigma \\ + I_1(T,\Sigma,M) + I_2(T,\Sigma,M)+I_3(T,\Sigma,M) 
	\end{multline}
	where $I_1=I_1(T,\Sigma,M),I_2=I_2(T,\Sigma,M),I_3=I_3(T,\Sigma,M)$ are as in \Cref{prop:setup}. Since $\int_0^\infty \chi_0(\sigma) \dd \sigma = 1$, the first term above contributes an $a_{00}$ to the sum for sufficiently large $\Sigma$. By \Cref{prop:pole_calc},  
	\begin{equation}
		\calF^{-1} a_0 (\sigma) = \Theta(\sigma) \sum_{j\in \calJ} i^{j}\frac{a_{0}^{(j)}}{(j-1)!} \sigma^{j-1}. 
	\end{equation}
	So, $\int_0^\Sigma \calF^{-1} a_0(\sigma) = \sum_{j\in \calJ} i^{j} a_{0}^{(j)}  \Sigma^{j} / j!$. So, for sufficiently large $\Sigma$ and $T\geq 1$, 
	\begin{align} 
	\int_0^\Sigma \alpha(\sigma) \dd \sigma  &= a_{00}+ \sum_{j\in \calJ, j\neq 0} \frac{i^{j}}{j!} a_0^{(j)} \Sigma^{j} + \int_0^\Sigma \beta(\sigma) \dd \sigma + I_1+I_2+I_3 \\
	&= a_{00}+ \sum_{j\in \calJ, j\neq 0} \frac{i^{j}}{j!} a_0^{(j)} \Sigma^{j} + \int_0^\Sigma \beta(\sigma) \dd \sigma + O(\Sigma^K T^{-1})+I_3 \\
	&= a_{00}+ \sum_{j\in \calJ, j\neq 0} \frac{i^{j}}{j!} a_0^{(j)} \Sigma^{j} + \int_0^\Sigma \beta(\sigma) \dd \sigma + O(\Sigma^K T^{-1})+O(\Sigma^{-N} \frakR(T)) 
	\label{eq:l59}
	\end{align} 
	using \Cref{prop:setup_base}, \Cref{prop:setup2}. The constants in the big-Os do not depend on $\Sigma,T$. 
	Here we decomposed 
	\begin{equation}
		I_3(T,\Sigma,M) = \frac{1}{2\pi i} \int_{-T}^{+T} \frac{g(\tau)}{\tau}  e^{i \tau \Sigma} \Big(1 - \frac{\tau^2}{T^2} \Big)^M  \dd \tau +  \frac{1}{2\pi i} \int_{-T}^{+T} \frac{p(\tau)}{\tau}  e^{i \tau \Sigma} \Big(1 - \frac{\tau^2}{T^2} \Big)^M  \dd \tau,
	\end{equation}
	bounded the first using \Cref{prop:setup2}, and bounded the second using the assumption \cref{eq:exa}. 
	If $T=\calR(\Sigma)$, then (say, for $\Sigma\geq 1$)
	\begin{equation}
		O(\Sigma^K T^{-1}) = O(\Sigma^K \calR^{-1}(\Sigma)) \geq  O(\Sigma^{-N} \frakR(T)) . 
		\label{eq:misc_ooo}
	\end{equation} 
	While $\calR(\Sigma)$ might be in $\operatorname{singsupp} c$ (in which case we cannot take $T=\calR(\Sigma)$ and appeal to the results above), because $\frakR$ is continuous  and because the singular support of $c$ is discrete it is the case that for any sufficiently large $\Sigma> 0$ we can find $T\geq 1$ not in the singular support of $c$ such that \cref{eq:misc_ooo} holds in the sense that 
	\begin{equation} 
	I_3(T,\Sigma,M) \leq C \Sigma^K \calR^{-1}(\Sigma)
	\end{equation}  
	for some constant $C>0$ independent of $\Sigma,T$. 
	Then, \cref{eq:l59} reads 
	\begin{equation}
		\int_0^\Sigma \alpha(\sigma) \dd \sigma = a_{00}+ \sum_{j\in \calJ, j\neq 0} \frac{i^j}{j!} a_0^{(j)} \Sigma^{j} + \int_0^\Sigma \beta(\sigma) \dd \sigma  + O(\Sigma^{K} \calR^{-1}(\Sigma)), 
	\end{equation}
	which is \cref{eq:g34}.
\end{proof}

\section{Tauberian argument: Mean-to-Max}
\label{sec:second}

We now move on to the second step (the ``mean-to-max'' step) of Newman's argument, which involves attempting to extract information about $\alpha\in \cup_{K\geq 0} \langle \sigma \rangle^K L^\infty(\bbR^+_\sigma)$ from the asymptotics of 
\begin{equation} 
	A_\Sigma = \int_0^\Sigma \alpha(\sigma) \dd \sigma 
	\label{eq:As2}
\end{equation} 
as $\Sigma\to \infty$. In order to extract any asymptotics whatsoever, we need to know that $\alpha(\sigma)$ does not fluctuate too rapidly with $\sigma$. Rather than axiomatize this condition (which can be done \cite{KTextbook}), we specialize to the sort of functions considered in eigenvalue counting problems: we assume that $\alpha(\sigma)$ is of the form 
\begin{equation}
	\alpha(\sigma) = \frac{1}{\langle \sigma \rangle^{J}} (N(\sigma) - Z(\sigma) + E(\sigma)) 
	\label{eq:as2}
\end{equation}
for some nondecreasing piecewise-continuous $N :\bbR^+\to \bbR^+$, polynomial $Z(\sigma)\in \bbR[\sigma]$ of degree $L>0$ (which will in Weyl's law be $d$), and  $E\in C_{\mathrm{c}}^\infty(\bbR^+)$. Here $J\geq 0$ is a to-be-decided parameter, but -- given the range of admissible $K$ in \Cref{prop:setup_base} -- we will want to take $J\leq K_0$, where $K_0$ is such that $N(\sigma)-Z(\sigma) \in \langle \sigma \rangle^{K_0} L^\infty(\bbR_\sigma)$ is known to hold. (We write `$K_0$' instead of `$K$' in the previous sentence to distinguish the growth rate of $N(\sigma)-Z(\sigma)$ from that of $\alpha(\sigma)$, the latter being denoted $K$ as in the previous section.) (Increasing $J$ above $K_0$ yields weaker results.) In the application to Weyl's law, we take $J=K_0=d-1$. 
In this section, we will mainly take $J$ to be sufficiently large such that \Cref{thm:Ingham} will imply that $\smash{\lim_{\Sigma\to\infty} A_\Sigma = \lim_{\Sigma \to \infty} \int_0^\Sigma \alpha(\sigma) \dd \sigma}$ exists. Similar results to those proven under this assumption will hold as long as $A_\Sigma$ does not diverge or fluctuate too rapidly as $\Sigma\to\infty$. 

\begin{proposition}
	\label{prop:step_two}
	Given the setup above, $J\leq L$, suppose that $\lim_{\Sigma\to\infty} A_\Sigma$ exists and satisfies 
	\begin{align}
		A_\Sigma &= o\Big( \frac{1}{\calR(\Sigma)}\Big) + \lim_{\Sigma\to\infty} A_\Sigma 
		\label{eq:little_o}
		\intertext{or}
		A_\Sigma &= O\Big( \frac{1}{\calR(\Sigma)}\Big) + \lim_{\Sigma\to\infty} A_\Sigma 
		\label{eq:big_o}
	\end{align}
	for some nondecreasing $\calR: [0,\infty)\to (0,\infty)$ with $\lim_{\Sigma\to\infty} \calR(\Sigma)=\infty$. Set $\kappa = L-\Delta = (L+J-1)/2$, where $\Delta\geq 0$ is defined by $\Delta = (L-J+1)/2$. 
	
	Then, for sufficiently large $\sigma\geq 0$, 
	$N(\Sigma) = Z(\Sigma)+o(\Sigma^{\kappa} \calR(\Sigma/2)^{-1/2})$ if \cref{eq:little_o} holds and $N(\Sigma) = Z(\Sigma)+O(\Sigma^{\kappa} \calR(\Sigma/2)^{-1/2})$ if \cref{eq:big_o} holds.
\end{proposition}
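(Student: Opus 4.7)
The plan is to translate the assumed asymptotics of $A_\Sigma$ into a pointwise bound on $N(\Sigma)-Z(\Sigma)$ by exploiting the monotonicity of $N$ together with the polynomial structure of $Z$. The idea is a standard ``mean-to-max'': if $\delta := |N(\Sigma_0)-Z(\Sigma_0)|$ is large at some $\Sigma_0$, then $N-Z$ cannot change sign or magnitude too quickly on an interval of width $h$ adjacent to $\Sigma_0$, producing an incremental contribution to $A_{\Sigma_0+h}-A_{\Sigma_0}$ (or $A_{\Sigma_0}-A_{\Sigma_0-h}$) which is incompatible with the convergence rate of $A_\Sigma$ unless $\delta$ is small.

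Fix $\Sigma_0$ large enough that $E$ vanishes on, say, $[\Sigma_0/2, 2\Sigma_0]$ (permissible since $E\in C_{\mathrm{c}}^\infty(\bbR^+)$). Since $Z\in \bbR[\sigma]$ has degree $L$, there is a constant $C_Z$ depending only on $Z$ with $|Z(\sigma)-Z(\Sigma_0)|\leq C_Z h \Sigma_0^{L-1}$ for all $\sigma\in [\Sigma_0-h,\Sigma_0+h]$ once $h\leq \Sigma_0$ and $\Sigma_0$ is large. Consider first the case $N(\Sigma_0)\geq Z(\Sigma_0)$, with $\delta = N(\Sigma_0)-Z(\Sigma_0)$. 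Taking $h = \delta/(2C_Z \Sigma_0^{L-1})$ and assuming for now that $h\leq \Sigma_0/2$, monotonicity gives $N(\sigma)\geq N(\Sigma_0)$ on $[\Sigma_0,\Sigma_0+h]$, whence $N(\sigma)-Z(\sigma)\geq \delta/2$ there. Integrating the definition \cref{eq:as2} over this interval, using $\langle \sigma\rangle^{-J} \geq c \Sigma_0^{-J}$,
\begin{equation*}
A_{\Sigma_0+h}-A_{\Sigma_0} \;\geq\; c\, h\, \Sigma_0^{-J}\, \delta \;=\; c'\,\frac{\delta^2}{\Sigma_0^{L+J-1}}.
\end{equation*}
On the other hand, under \cref{eq:little_o} (resp.\ \cref{eq:big_o}), the triangle inequality $|A_{\Sigma_0+h}-A_{\Sigma_0}|\leq |A_{\Sigma_0+h}-\lim A|+|A_{\Sigma_0}-\lim A|$ and the monotonicity of $\calR$ bound the left side by $o(\calR(\Sigma_0)^{-1})$ (resp.\ $O(\calR(\Sigma_0)^{-1})$). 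Solving for $\delta$ and recalling $\kappa = (L+J-1)/2$ yields $\delta = o(\Sigma_0^\kappa \calR(\Sigma_0)^{-1/2})$ (resp.\ $O$). The symmetric case $N(\Sigma_0)\leq Z(\Sigma_0)$ runs on the interval $[\Sigma_0-h,\Sigma_0]$ instead; the same arithmetic produces $|A_{\Sigma_0}-A_{\Sigma_0-h}|\gtrsim \delta^2/\Sigma_0^{L+J-1}$, but the upper bound now involves $\calR(\Sigma_0-h)\geq \calR(\Sigma_0/2)$ (here is where the $\Sigma/2$ in the statement is born), giving the weaker conclusion $\delta = o(\Sigma_0^\kappa \calR(\Sigma_0/2)^{-1/2})$ (resp.\ $O$). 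Combining both cases establishes the proposition.

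The main technical subtlety to dispatch is the standing assumption $h\leq \Sigma_0/2$ used above. If this fails, then by construction $\delta \geq C \Sigma_0^L$. In that event we redo the argument with $h=\Sigma_0/2$ directly: on $[\Sigma_0-h,\Sigma_0]$ (or $[\Sigma_0,\Sigma_0+h]$) the polynomial variation of $Z$ is bounded by $C'\Sigma_0^L\leq \delta/2$ provided $\delta\geq 2C'\Sigma_0^L$, so $|N-Z|\geq \delta/2$ still holds on an interval of length $\Sigma_0/2$. Integrating produces $|A_{\Sigma_0}-A_{\Sigma_0/2}|\gtrsim \Sigma_0^{1-J}\delta$, while the Cauchy-difference upper bound is again $o(\calR(\Sigma_0/2)^{-1})$. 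This forces $\Sigma_0^L\lesssim \delta \lesssim \Sigma_0^{J-1} o(\calR(\Sigma_0/2)^{-1})$, i.e.\ $\Sigma_0^{L-J+1} = o(\calR(\Sigma_0/2)^{-1})\to 0$, contradicting $J\leq L$ for large $\Sigma_0$. Hence the exceptional regime is vacuous and the original choice of $h$ is legitimate, completing the plan.
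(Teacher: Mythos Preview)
Your proof is correct and follows essentially the same mean-to-max strategy as the paper: bound the polynomial drift of $Z$ by $C_Z h\Sigma_0^{L-1}$, choose $h\sim \delta/\Sigma_0^{L-1}$, use monotonicity of $N$ to get $|N-Z|\geq \delta/2$ on an interval of that length, integrate, and compare against the assumed rate of convergence of $A_\Sigma$. The paper packages the two regimes (your main case and your exceptional case $h>\Sigma_0/2$) into a single inequality involving $\min\{\langle\sigma\rangle^{1-J},\,|N(\sigma)-Z(\sigma)|\langle\sigma\rangle^{1-J-L}\}$, but the content is identical, including the use of $[\Sigma_0,\Sigma_0+h]$ versus $[\Sigma_0-h,\Sigma_0]$ depending on the sign of $N(\Sigma_0)-Z(\Sigma_0)$, which is what forces the $\calR(\Sigma/2)$ in the final statement.
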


\begin{proof}
	It evidently suffices to consider the case $E=0$. We use the inequalities 
	\begin{align}
		(N(\sigma) - Z(\sigma)) \min\Big\{ \frac{1}{\langle \sigma \rangle^{J-1}} , \frac{|N(\sigma) - Z(\sigma)|}{\langle \sigma \rangle^{J+L-1}} \Big\} &= O\Big( \sup_{\Sigma\geq \sigma} | A_\Sigma - A_\sigma |\Big)
		\label{eq:misc_921} \\
		 (Z(\sigma) - N(\sigma))\min\Big\{ \frac{1}{\langle \sigma \rangle^{J-1}}, \frac{|N(\sigma) - Z(\sigma)|}{\langle \sigma \rangle^{J+L-1}} \Big\} &= O\Big(\sup_{\sigma/2\leq \Sigma\leq \sigma} | A_\Sigma - A_\sigma | \Big),
		\label{eq:misc_123}
	\end{align}
	which hold for $\sigma>\sigma_0$, $\sigma_0$ sufficiently large (where the constant depends on $Z,\sigma_0$, but not on $\Sigma,\sigma$). 
	These bounds follow from the assumed monotonicity of $N$ and the slow variation of $Z$ (which is a consequence of the latter being a polynomial):
	\begin{itemize}
		\item for sufficiently large $\sigma_0$, $\sigma>\sigma_0$, if $N(\sigma) > Z(\sigma)$, 
		\begin{align}
			\begin{split} 
				A_\Sigma - A_\sigma  &= \int_\sigma^\Sigma \frac{1}{\langle \varsigma \rangle^J } (N(\varsigma) - Z(\varsigma)) \dd \varsigma \\
				&= \Big[ (N(\sigma) - Z(\sigma) ) \int_\sigma^\Sigma  \frac{1}{\langle \varsigma\rangle^J} \dd \varsigma + \int_\sigma^\Sigma \frac{1}{\langle \varsigma\rangle^J} (Z(\sigma)-Z(\varsigma)) \dd \varsigma   \Big],
			\end{split} 
		\end{align} 
		as long as $\sigma\leq \Sigma \leq 2\sigma$. So, 
		\begin{equation} 
				(N(\sigma) - Z(\sigma) ) \frac{ (\Sigma-\sigma)}{\langle \Sigma \rangle^J}  \dd \varsigma  = 
				O\Big( |A_\Sigma - A_\sigma| +\frac{1}{\langle \sigma\rangle^J}  \langle \sigma \rangle^{L-1} (\Sigma-\sigma)^2 \Big).  
				\label{eq:misc_k62}
		\end{equation} 
		Consequently, there exists a $C>0$ (dependent on $\sigma_0, Z$, but not on $\sigma$) such that for any $c>0$, if $c< (\Sigma-\sigma) \langle \sigma \rangle^{L-1} (N(\sigma)-Z(\sigma))^{-1}  <C$ and $\sigma\leq \Sigma\leq 2\sigma$, then we can absorb the second error on the right-hand side of \cref{eq:misc_k62} into the left-hand side to get 
		\begin{equation}
			A_\Sigma - A_\sigma \geq \Omega \Big( \frac{1}{\langle \sigma \rangle^{J+L-1}}  (N(\sigma)- Z(\sigma))^2  \Big)
			\label{eq:misc_812}
		\end{equation}
		(where the constant depends on $c,C$, along with $Z$). 
		On the other hand, if $|N(\sigma)-Z(\sigma)|\geq \langle \sigma \rangle^L$, then we can find $c',C'>0$ with $c'<C'<1$ such that if  $(1+c')\sigma\leq \Sigma\leq (1+C')\sigma$ then 
		\begin{equation}
			A_\Sigma - A_\sigma \geq \Omega \Big( \frac{1}{\langle \sigma \rangle^{J-1}}  (N(\sigma)- Z(\sigma))  \Big)
			\label{eq:misc_813}
		\end{equation}
		(where the constant depends on $c',C'$). 
		Taking $c\in (0,1/2)$, if $\sigma$ is sufficiently large and if $|N(\sigma)-Z(\sigma)|\leq \langle \sigma \rangle^L$ then there exists a $\Sigma$ such that $\sigma\leq \Sigma \leq 2\sigma$ and $c< (\Sigma-\sigma) \langle \sigma \rangle^{L-1} (N(\sigma)-Z(\sigma))^{-1}  <C$, in which case we conclude \cref{eq:misc_812}. Otherwise, we have \cref{eq:misc_813} for some $\Sigma \in [\sigma,2\sigma]$. In either case, \cref{eq:misc_921} holds (if $N(\sigma)>Z(\sigma)$). (And \cref{eq:misc_921} holds trivially if $N(\sigma) \leq Z(\sigma)$.) 
		\item 
		Similarly, if $N(\sigma)<Z(\sigma)$ for some sufficiently large $\sigma$, 
		\begin{align}
			A_\sigma - A_\Sigma &= \int_\Sigma^\sigma \frac{1}{\langle \varsigma\rangle^J} (N(\varsigma)-Z(\varsigma)) \dd \varsigma \\ 
			&=  \Big[ (N(\sigma) - Z(\sigma) ) \int_\Sigma^\sigma  \frac{1}{\langle \varsigma\rangle^J} \dd \varsigma + \int_\Sigma^\sigma \frac{1}{\langle \varsigma\rangle^J} (Z(\sigma)-Z(\varsigma)) \dd \varsigma   \Big] \\
			&\geq \Big[  (N(\sigma) - Z(\sigma) ) \frac{ (\sigma-\Sigma)}{\langle \Sigma \rangle^J} + O\Big( \frac{1}{\langle \sigma\rangle^J}  \langle \sigma \rangle^{L-1} (\sigma-\Sigma)^2 \Big) \dd \varsigma \Big]
		\end{align}
		as long as $\Sigma \in (\sigma/2,\sigma)$. We conclude as before that, for sufficiently large $\sigma$, \cref{eq:misc_123} holds.  
	\end{itemize}

	Combining \cref{eq:misc_921} and \cref{eq:misc_123}, and using the fact that $\calR$ is nondecreasing, we get the inequality 
	\begin{align}
		|N(\sigma) - Z(\sigma) | &\leq  O(\max\{ \langle \sigma \rangle^{J-1} \calR(\sigma/2)^{-1},\langle \sigma \rangle^{(J+L-1)/2} \calR(\sigma/2)^{-1/2}\}) \\ 
		 &\leq  O(\langle \sigma \rangle^{(J+L-1)/2} \calR(\sigma/2)^{-1/2}) 
		\intertext{or the inequality} 
		|N(\sigma) - Z(\sigma) | &\leq o(\max\{ \langle \sigma \rangle^{J-1}\calR(\sigma/2)^{-1}, \langle \sigma \rangle^{(J+L-1)/2} \calR(\sigma/2)^{-1/2}\}) \\
		&\leq o(\langle \sigma \rangle^{(J+L-1)/2} \calR(\sigma/2)^{-1/2}),
	\end{align}
	depending on which of  \cref{eq:little_o}, \cref{eq:big_o} hold. 
\end{proof}

We now prove \Cref{thm:main_1}, which we rewrite here as a proposition: 
\begin{proposition}
	\label{prop:main_1}
	Suppose that $N: \bbR^{\geq 0} \to (0,\infty)$ is a piecewise-continuous, nondecreasing function, $Z(\sigma)\in \bbC[\sigma]$ is a polynomial in $\sigma$ of degree at most $d\in \bbN$, and $E\in C_{\mathrm{c}}^\infty((0,\infty))$. 
	Let 
	\begin{equation} 
		\alpha(\sigma) = \begin{cases}
			\langle \sigma\rangle^{1-d} (N(\sigma)-Z(\sigma)+E(\sigma)) & (\sigma\geq 0) \\
			0 & (\sigma< 0),
		\end{cases}
	\end{equation}
	and suppose that the singular support of $\calF N$ and hence $a= \calF\alpha$ is discrete. Suppose further that 
	\begin{enumerate}
		\item $\alpha \in  L^\infty(\bbR)$, 
		\label{it:taub1}
		\item $a(\tau)$ is in $\tau L^{1,\ell}_{\mathrm{loc}}(\bbR_\tau)$ for some integer $\ell\geq 1$. 
		\label{it:ab1}
	\end{enumerate}
	Then, for any $\Lambda>0$, setting $\calR_{0,\Lambda}(\Sigma) = \max\{1, \sup\{T>0: T \lVert t^{-1} a(t) \rVert_{L^{1,\ell}[-T,+T]} \leq \Lambda \Sigma^\ell \}\}$, 
	\begin{equation} 
		N(\Sigma) = Z(\Sigma)  + O( \Sigma^{d-1} \calR_{0,\Lambda}(\Sigma)^{-1/2})
	\end{equation}  
	as $\Sigma\to\infty$. 
\end{proposition}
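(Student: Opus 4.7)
The strategy is a two-step application of the two main results already developed in the excerpt: first, use the Ingham-type \Cref{thm:Ingham} to control
\[
A_\Sigma := \int_0^\Sigma \alpha(\sigma) \dd \sigma,
\]
and then feed this into the mean-to-max \Cref{prop:step_two} to recover pointwise asymptotics of $N$.

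For the Ingham step I use the \emph{trivial} decomposition. Because hypothesis \ref{it:ab1} says $a \in \tau L^{1,\ell}_{\mathrm{loc}}(\bbR_\tau)$, the distribution $\tau^{-1} a$ is locally in $L^{1,\ell}$, so (by $1$-dimensional Sobolev embedding) $a$ is continuous at the origin and vanishes there. In particular, $a$ has no pole and no constant term at the origin, and I can take $a_{00} = 0$, $\calJ = \emptyset$ (so $a_0 = 0$), $b = 0$, $p = 0$, and $c = g = a$. Then $\gamma = \calF^{-1} c = \alpha$, which lies in $L^\infty(\bbR)$ by hypothesis \ref{it:taub1}, verifying the Tauberian hypothesis of \Cref{thm:Ingham} with $K = 0$. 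For the growth control function I set
\[
\frakR(T) = \lVert \tau^{-1} a(\tau) \rVert_{L^{1,\ell}[-T,+T]},
\]
which is nondecreasing and continuous in $T$ by absolute continuity of the Lebesgue integral. Condition \cref{eq:exa} is vacuous since $p = 0$. Taking $M \geq \ell$, the conclusion \cref{eq:g34} of \Cref{thm:Ingham} with parameter $\Lambda'>0$, in which all nontrivial terms on the right-hand side have been chosen to vanish, collapses to
\[
A_\Sigma = O\bigl( \calR_{0,\Lambda'}(\Sigma)^{-1}\bigr);
\]
in particular $\lim_{\Sigma \to \infty} A_\Sigma = 0$, since $\calR_{0,\Lambda'}(\Sigma) \to \infty$.

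Now I apply \Cref{prop:step_two} with $J = d - 1$ and $L = d$. Although that proposition is stated for $Z$ of degree exactly $L$, its proof only uses the elementary polynomial bound $|Z(\sigma) - Z(\varsigma)| = O(\langle \sigma \rangle^{L-1}|\sigma - \varsigma|)$, which is valid whenever $\deg Z \leq L$, so the choice $L = d$ is admissible. Then $\kappa = (L + J - 1)/2 = d - 1$, and invoking the big-O branch \cref{eq:big_o} yields
\[
N(\Sigma) = Z(\Sigma) + O\bigl(\Sigma^{d-1}\calR_{0,\Lambda'}(\Sigma/2)^{-1/2}\bigr).
\]
To replace $\calR_{0,\Lambda'}(\Sigma/2)$ by $\calR_{0,\Lambda}(\Sigma)$ for the given $\Lambda$, observe that the defining inequality $T\lVert t^{-1} a \rVert_{L^{1,\ell}[-T,+T]} \leq \Lambda' \Sigma^\ell$ is homogeneous of degree $\ell$ in $\Sigma$, so $\calR_{0,\Lambda'}(\Sigma/2) = \calR_{0, \Lambda'/2^\ell}(\Sigma)$. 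Choosing $\Lambda' = 2^\ell \Lambda$ therefore delivers exactly the stated bound.

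The argument is, modulo these bookkeeping points, essentially automatic; the only conceptual input is that the trivial decomposition is \emph{admissible} precisely because hypothesis \ref{it:ab1} is designed to force $a$ to vanish at the origin with quantitative $L^{1,\ell}$ control, so that all the pole-versus-smooth accounting machinery of \Cref{thm:Ingham} is trivially discharged and the Tauberian input $\alpha \in L^\infty$ gets absorbed into the error term. If there is a subtlety to watch, it is keeping the $\Lambda$ dependence straight across the $\Sigma \mapsto \Sigma/2$ rescaling in \Cref{prop:step_two}, handled above by rescaling $\Lambda$ in the Ingham step.
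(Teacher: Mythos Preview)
Your proposal is correct and follows essentially the same approach as the paper: apply \Cref{thm:Ingham} with the trivial decomposition $a_{00}=0$, $a_0=0$, $b=0$, $p=0$, $c=g=a$ (using hypothesis \ref{it:ab1} to see that $a$ is continuous and vanishes at the origin, and hypothesis \ref{it:taub1} for the Tauberian bound with $K=0$), then feed the resulting estimate on $A_\Sigma$ into \Cref{prop:step_two} with $J=d-1$, $L=d$, and finally absorb the $\Sigma\mapsto\Sigma/2$ shift by rescaling $\Lambda$. Your bookkeeping of the $\Lambda$-rescaling, via $\calR_{0,\Lambda'}(\Sigma/2)=\calR_{0,\Lambda'/2^\ell}(\Sigma)$ and the choice $\Lambda'=2^\ell\Lambda$, is in fact cleaner than the paper's (which contains what appears to be a typo, writing $2^{1/\ell}\Lambda$ where $2^\ell\Lambda$ is meant); your observation that \Cref{prop:step_two} only requires $\deg Z\leq L$ is also a fair point.
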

\begin{proof}
	We apply \Cref{thm:Ingham} with $a_{00},a_0,b,p = 0$, i.e.\ with $a = c=g$. The assumptions \cref{it:taub1}, \cref{it:ab1} give the hypotheses of that theorem. (Note that $a\in \tau L^{1,1}_{\mathrm{loc}}(\bbR_\tau)$ implies $a\in \tau C^0(\bbR_\tau)$, so in particular $a$ is continuous in some neighborhood of the origin, verifying the remaining hypothesis of \Cref{thm:Ingham}.) Then, per the conclusion of the theorem,
	\begin{align}
		A_\Sigma=\int_0^\Sigma \frac{1}{\langle \sigma \rangle^{d-1}}(N(\sigma) - Z(\sigma)) \dd \sigma &= \int_0^\Sigma \alpha(\sigma)\dd \sigma - \int_0^\Sigma \frac{E(\sigma)}{\langle \sigma \rangle^{d-1}}  \dd \sigma  \\ &=  -\int_0^\infty \frac{E(\sigma)}{\langle \sigma \rangle^{d-1}}  \dd \sigma  + O( \calR_{0,2^{1/\ell}\Lambda}(\Sigma)^{-1}) \\
		&=  -\int_0^\infty \frac{E(\sigma)}{\langle \sigma \rangle^{d-1}}  \dd \sigma  + O( \calR_{0,\Lambda}(2\Sigma)^{-1}). \label{eq:misc_765}
	\end{align}
	Since $\lim_{\Sigma\to\infty} \calR_{0,\Lambda}(\Sigma)=\infty$, \cref{eq:misc_765} implies that $\lim_{\Sigma\to\infty} A_\Sigma = - \int_0^\infty E(\sigma) \langle \sigma \rangle^{1-d} \dd \sigma $ exists.
	Applying \Cref{prop:step_two}, We conclude that $N(\Sigma) = Z(\Sigma) + O(\Sigma^{d-1} \calR_{0,\Lambda}(\Sigma)^{-1/2} )$.
\end{proof}

\section{Application to Weyl's law}
\label{sec:final}

Let $(M,g)$ denote a compact Riemannian manifold of dimension $d\geq 1$, and let $0=\lambda_0 \leq \lambda_1 \leq \lambda_2\leq \cdots$ denote the eigenvalues of the positive semidefinite Laplace-Beltrami operator (counted with multiplicity), and for convenience we assume $\operatorname{Vol}_g M = 1$.
Consider the following well-defined regular Borel measure on the line $\bbR_\sigma$:
\begin{equation}
	\mu_{1/2}(\sigma) = \sum_{n=0}^\infty \delta(\sigma-\lambda_n^{1/2}) = \sum_{n=0}^\infty \delta(\sigma-\sigma_n), 
\end{equation}
where $\sigma_n = \lambda_n^{1/2}$. 
Note that $\mu_{1/2}$, considered as a map $\smash{\scrS(\bbR)\ni \varphi \mapsto \int_{-\infty}^{+\infty} \varphi(\sigma) \dd \mu_{1/2}(\sigma)}$, is in $\scrS'(\bbR)$ (e.g.\ as a consequence of the weakest form of Weyl's law). This measure is related to the eigenvalue counting function $N(\lambda) = \{n\in \bbN: \lambda_n \leq \lambda \}$ by the distributional identity $\partial_\sigma N_{1/2}(\sigma) = \mu_{1/2}(\sigma)$, where 
\begin{equation}
	N_{1/2}(\sigma)= \#\{ n : \lambda_n^{1/2} = \sigma_n\leq \sigma \} = N(\sigma^2). 
\end{equation}
Now suppose that the nontrivial geodesic loops in $M$ are at most $\delta$-fold degenerate. (`Geodesic loop' means smooth geodesic loop, so a periodic trajectory of the Hamiltonian flow in phase space.) Here 
\begin{equation} 
	\delta\in \{0,\ldots,2d-1\} 
\end{equation} 
is roughly the Hausdorff dimension of the set of points in the sphere bundle $\bbS M$ lying along a (nontrivial) geodesic loop. (This dimension will always be positive, unless $M$ just so happens to have no nontrivial geodesic loops.)
Suppose that $(M,g)$ is a nonpathological (see the precise hypotheses of \cite[Theorem 4.5]{DG}), smooth, compact Riemannian manifold without boundary of dimension $d\geq 1$, whose nontrivial geodesic loops are at most $\delta$-fold degenerate. Then, as shown in \cite{Chazarain1974}\cite{Hormander68}\cite{DG}, letting $\calT \subseteq \bbR$ be the set of lengths of geodesic loops (including loops consisting of multiple laps around a single loop):
\begin{enumerate}
	\item $\calT$ is discrete and has no accumulation points.
	\label{DG:1}
	\item The singular support of the half-wave trace $\operatorname{HWT}_{M,g}=\calF\mu_{1/2}$ is $\calT$. 
	\label{DG:2}
	\item $0 \in \calT$, and given an open neighborhood $U \subset \bbR$ of $0$ whose closure is disjoint from $\calT\backslash \{0\}$, 
	\[
	\calF\mu_{1/2}|_U(\tau) = \sum_{j=0}^{d-1}  \frac{Z_j}{(\tau-i0)^{d-j}}+ E(\tau)
	\]
	for some $Z_0,\ldots,Z_{d-1} \in \bbC$, $Z_{0}\neq 0$, and $E \in C^\infty(U)$. Moreover, $Z_1,Z_3,Z_5,\cdots = 0$ \cite[Proposition 2.1]{DG}.
	\label{DG:3}
	\item Given any $T \in \calT\backslash \{0\}$ and an open neighborhood $V \subseteq \bbR$ of $T$ whose closure is disjoint from $\calT\backslash \{T\}$, 
	\[
	\calF\mu_{1/2}|_V(\tau) = \sum_{j=0}^{\lfloor 2^{-1}(\delta-1) \rfloor} \frac{C_j}{(\tau-T-i0)^{-j+(\delta+1)/2}} + E(\tau) 
	\]
	for some $T$-dependent $C_0=C_0(T),\ldots,C_{\lfloor 2^{-1}(\delta-1) \rfloor} = C_{\lfloor 2^{-1}(\delta-1) \rfloor}(T) \in \bbC$, and function $E \in C^\infty(V)$.  
	\label{DG:4}
\end{enumerate}
\begin{figure}
	\begin{center}
		\includegraphics[width=.5\linewidth]{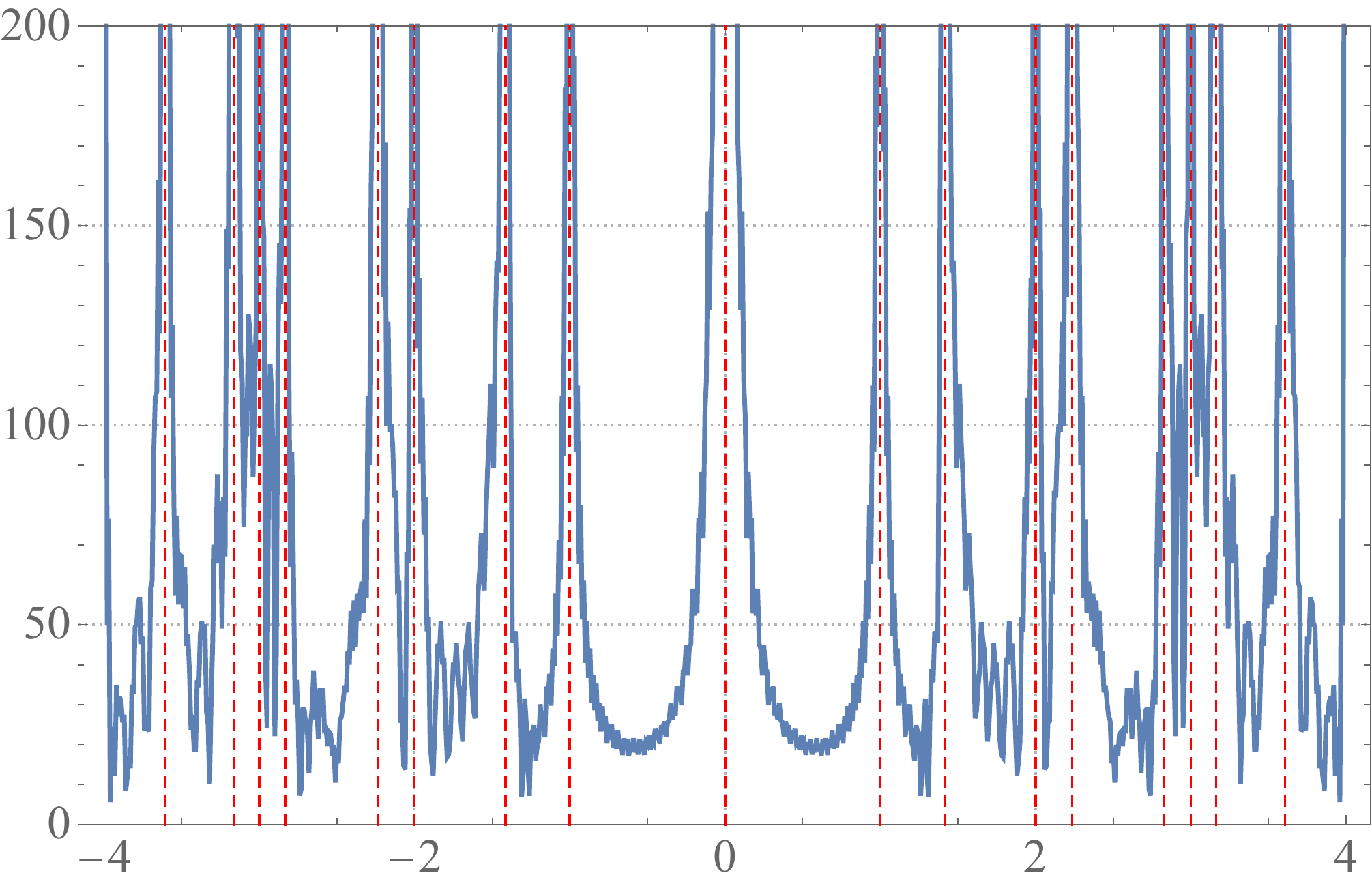}
	\end{center}
\caption{A numerical approximation of $|\operatorname{HWT}_{M,g}(\tau)|$ on the flat 2-torus $\bbT^2$, scaled so that $\smash{\calT=\{\pm (n^2+m^2)^{1/2}:n,m\in \bbN\}}$. Here, we are applying a cut-off in frequency space, summing up the contributions coming from the first 10201 eigenvalues. Dashed red lines mark the elements of $\calT \cap (-4,+4)$. }
\end{figure}

Referring to the statement of \cite[Theorem 4.5]{DG}, $\delta = \max\{d_j(T):j=1,\ldots,r, T\in \calT\backslash \{0\}\}$. 
In fact, H\"ormander and Duistermaat \& Guillemin provide explicit formulas for the first couple of these coefficients. 
The very precise information on the singularities of $\calF\mu_{1/2}$ given by the Duistermaat-Guillemin result can serve as the local input to Newman's Tauberian argument.
For each $s \in \bbR $, $\varepsilon>0$, and open $U\subseteq \bbR$, let $\langle D \rangle^{-s} L^{1+\varepsilon}_{\mathrm{loc}}(U) \subseteq \scrD'(\bbR)$ denote the subspace of tempered distributions $u$ such that whenever $\chi \in C_{\mathrm{c}}^\infty(\bbR)$ is supported in $U$, $\langle D \rangle^s \chi u \in L^{1+\varepsilon}(\bbR)$. (This notation is slightly abusive, since according to standard usage a statement such as ``$u\in L^{1+\varepsilon}_{\mathrm{loc}}(\bbR)$'' is not supposed to involve any temperedness of $u$ --- but since $\langle D \rangle^s$ is only defined on tempered distributions (except for $s\in 2 \bbN$), this should not cause any confusion below.)

\begin{lemma}
	\label{lem:Sobolev_agreement}
	For any $\varepsilon>0$ and $s\in \bbR$, if $\ell\in \bbN$ satisfies $\ell\leq s$, if $u\in \langle D \rangle^{-s} L^{1+\varepsilon}_{\mathrm{loc}}(U)$ then $\chi u \in L^{1+\varepsilon,\ell}(\bbR)$ for all $\chi \in C_{\mathrm{c}}^\infty(U)$. 
\end{lemma}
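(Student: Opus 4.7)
The plan is to recognize this as the standard equivalence between Bessel potential spaces and classical Sobolev spaces for $1<p<\infty$. Specifically, the hypothesis says $\chi u$ lies in the Bessel potential space $\langle D \rangle^{-s} L^{1+\varepsilon}(\bbR)$, and the conclusion is that $\chi u$ lies in the classical Sobolev space $L^{1+\varepsilon,\ell}(\bbR) = W^{\ell,1+\varepsilon}(\bbR)$. The bridge between these two facts is the Mikhlin (a.k.a.\ Mikhlin-H\"ormander) multiplier theorem, which applies precisely because $1+\varepsilon \in (1,\infty)$ and fails for $p=1$ (which is exactly why the author has upgraded $L^1$ to $L^{1+\varepsilon}$ in this section).

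First I would fix $\chi \in C_{\mathrm{c}}^\infty(U)$. By hypothesis, $\langle D \rangle^s (\chi u) \in L^{1+\varepsilon}(\bbR)$. The next step is to drop from $s$ down to $\ell$ via the Fourier multiplier $\langle D \rangle^{\ell-s}$, whose symbol $m(\xi) = \langle \xi \rangle^{\ell-s}$ is smooth, bounded (since $\ell - s \leq 0$), and satisfies the Mikhlin estimates $|\partial_\xi^\alpha m(\xi)| \leq C_\alpha \langle \xi \rangle^{\ell-s-|\alpha|} \leq C_\alpha |\xi|^{-|\alpha|}$ for $|\xi|\geq 1$. Mikhlin's theorem therefore yields boundedness of $\langle D \rangle^{\ell-s}$ on $L^{1+\varepsilon}(\bbR)$, so $\langle D \rangle^\ell (\chi u) \in L^{1+\varepsilon}(\bbR)$.

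The final step is to deduce, from $\langle D \rangle^\ell (\chi u) \in L^{1+\varepsilon}(\bbR)$, that $\partial_\sigma^k (\chi u) \in L^{1+\varepsilon}(\bbR)$ for every integer $0 \leq k \leq \ell$. This uses the Fourier multipliers with symbols $(i\xi)^k \langle \xi \rangle^{-\ell}$; these are smooth, bounded (since $k \leq \ell$), and a direct check shows they satisfy Mikhlin's hypotheses, so they are bounded on $L^{1+\varepsilon}(\bbR)$. Applying them to $\langle D \rangle^\ell(\chi u)$ gives $\partial_\sigma^k (\chi u) \in L^{1+\varepsilon}(\bbR)$, which is the definition of $\chi u \in L^{1+\varepsilon,\ell}(\bbR)$.

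There is no real conceptual obstacle here; the proof is a bookkeeping exercise on the Mikhlin multiplier theorem, and the whole statement could be replaced by a citation to e.g.\ Stein's \emph{Singular Integrals and Differentiability Properties of Functions}, Chapter V, where the identity $H^{\ell,p}(\bbR)=W^{\ell,p}(\bbR)$ for $1<p<\infty$ and $\ell \in \bbN$ is proven in precisely this way. The only mild subtlety worth flagging explicitly is that temperedness of $u$ is needed to make sense of $\langle D \rangle^s(\chi u)$ in the first place when $s \notin 2\bbN$, as the author already notes in the parenthetical preceding the lemma.
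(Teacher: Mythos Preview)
Your proposal is correct and takes essentially the same approach as the paper: both arguments reduce to the $L^{1+\varepsilon}$-boundedness of the Fourier multiplier $\partial^j\langle D\rangle^{-s}$ (order $\leq 0$ since $j\leq\ell\leq s$), which is exactly Mikhlin's theorem. The only cosmetic difference is that you factor this multiplier as $\langle D\rangle^{\ell-s}$ followed by $(i\xi)^k\langle\xi\rangle^{-\ell}$, whereas the paper applies $\partial^j\langle D\rangle^{-s}$ in one step and phrases the justification as $L^p$-boundedness of Kohn--Nirenberg $\Psi$DOs of nonpositive order (citing Simon's harmonic analysis text).
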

\begin{proof}
	We have that $\partial^j \chi u = \partial^j \langle D \rangle^{-s} \langle D \rangle^{s} \chi u \in \partial^j \langle D \rangle^{-s} L^{1+\varepsilon}(\bbR)$. For $j\leq \ell$, $ \partial^j \langle D \rangle^{-s}$ is a Kohn-Nirenberg $\Psi$DO of nonpositive order, which implies that it maps compactly supported elements of $L^{1+\varepsilon}(\bbR)$ to $L^{1+\varepsilon}(\bbR)$ (using the $L^p$-boundedness of Kohn-Nirenberg $\Psi$DOs for $p>1$; see e.g.\ \cite[Theorem 6.5.5]{SimomHarmonic}). (Of course, one does not need all this theory to deal with the special case of $\langle D \rangle^{-s}$. See \cite[\S6.3]{SimomHarmonic} for a direct treatment.) So, $\partial^j \chi u \in L^{1+\varepsilon}(\bbR)$ for each $j=1,\ldots,\ell$, and we can therefore conclude that $\chi u \in L^{1+\varepsilon,\ell}(\bbR)$.  
\end{proof}

\begin{proposition}
	\label{prop:apriori}
	Suppose that $(M,g)$ is a nonpathological $d$-dimensional compact Riemannian manifold (in the sense of satisfying the hypotheses of \cite[Theorem 4.5]{DG}) whose nontrivial geodesic loops are at most $\delta$-fold degenerate, $\delta \in \{0,\ldots,2d-2\}$.

	Then,
	\begin{equation}
	\langle D_\tau \rangle^{1-d} (\tau^{-1} \operatorname{HWT}_{M,g}(\tau)) \in L^{1,\ell}_{\mathrm{loc}}(\bbR\backslash \{0\})
	\label{eq:nice_Sobolev_goal}
	\end{equation} 
	for $\ell = \lceil d - \delta/2-1/2 \rceil - 1$.
\end{proposition}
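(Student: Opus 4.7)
By the Sobolev agreement lemma (\Cref{lem:Sobolev_agreement}), together with the inclusion of compactly supported $L^{1+\varepsilon}$ functions in $L^1$, it suffices to exhibit some $\varepsilon>0$ for which
\[
\langle D_\tau\rangle^{\ell+1-d}\bigl(\tau^{-1}\operatorname{HWT}_{M,g}\bigr) \in L^{1+\varepsilon}_{\mathrm{loc}}(\bbR\setminus\{0\}).
\]
This is the central reduction: the $L^{1,\ell}$ claim becomes a claim of plain $L^{1+\varepsilon}$ integrability of a distribution whose Fourier-side behavior can be read off from the Duistermaat-Guillemin singularity structure.

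\textbf{Localization.} The set $\calT$ is discrete (item \ref{DG:1}), so on any compact $K\subset \bbR\setminus\{0\}$ only finitely many $T\in \calT\setminus\{0\}$ matter. Since $1/\tau$ is smooth at any such $T$, Taylor expanding $1/\tau=\sum_k (-T)^{-k-1}(\tau-T)^k$ and using item \ref{DG:4} shows that, modulo $C^\infty$,
\[
\tau^{-1}\operatorname{HWT}_{M,g}(\tau) \equiv \sum_{j=0}^{\lfloor(\delta-1)/2\rfloor} \tilde C_j (\tau - T -i0)^{-(\delta+1)/2+j}
\]
near $T$, with the same qualitative singular structure as $\operatorname{HWT}_{M,g}$ itself. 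Choose cutoffs $\psi_T \in C_{\mathrm{c}}^\infty$ equal to $1$ near $T$ and supported in a neighborhood of $T$ meeting $\calT$ only at $T$. By pseudolocality of the $\Psi$DO $\langle D_\tau\rangle^{\ell+1-d}$ (applied to disjoint-support pieces) and direct smoothness of the $C^\infty$ remainder, matters reduce to establishing
\[
\langle D_\tau\rangle^{\ell+1-d}\bigl[\psi_T\,(\tau-T-i0)^{-s}\bigr] \in L^{1+\varepsilon}_{\mathrm{loc}} \qquad \text{for } s = (\delta+1)/2,
\]
the cases $s-j$ with $j>0$ being strictly easier.

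\textbf{Fourier analysis and arithmetic.} Using the identity $\widehat{(\tau-T-i0)^{-s}}(\sigma) = c_s\,\Theta(\sigma)\sigma^{s-1}e^{-iT\sigma}$ (valid for every $s>0$ by analytic continuation), together with the fact that convolution with the Schwartz function $\hat\psi_T$ preserves the large-$|\sigma|$ asymptotics, the Fourier transform of the distribution above agrees, modulo a symbol of order two less, with a constant multiple of
\[
\langle\sigma\rangle^{\ell+1-d}\,\Theta(\sigma)\sigma^{s-1}e^{-iT\sigma} \;\sim\; \Theta(\sigma)\sigma^{\ell+s-d}e^{-iT\sigma}.
\]
Inverting, we recover a constant multiple of $(\tau-T-i0)^{-\nu}$ with $\nu = \ell + s - d + 1$, up to a locally smoother remainder. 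Substituting $s = (\delta+1)/2$ and $\ell = \lceil d-\delta/2-1/2\rceil - 1$ gives
\[
\nu = \lceil d-\delta/2-1/2\rceil - (d-\delta/2-1/2) \in \bigl\{0,\tfrac12\bigr\}.
\]
In particular $\nu<1$, so $(\tau-T-i0)^{-\nu}$ lies in $L^{1+\varepsilon}_{\mathrm{loc}}$ for any $\varepsilon$ with $\nu(1+\varepsilon)<1$ (and any $\varepsilon>0$ if $\nu=0$). This closes the argument modulo the bookkeeping.

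\textbf{Main obstacle.} The substantive technical work consists of cleanly tracking the three flavors of "smoother remainder" absorbed above: (i) the difference $\langle\sigma\rangle^{\ell+1-d}-|\sigma|^{\ell+1-d}$ is a symbol of order $\ell-1-d$, hence better than the leading term by two orders; (ii) convolution with $\hat\psi_T$ only modifies finite-$\sigma$ behavior, contributing a Schwartz correction; (iii) the Taylor expansions of $\tau^{-1}$ about $T$ and of $\psi_T$ about its plateau contribute strictly lower-order singularities $(\tau-T-i0)^{-s'}$ with $s'<s$. Each such contribution lands, by induction on the singularity order (or directly), in $L^{1+\varepsilon}_{\mathrm{loc}}$. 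The underlying fact doing the work is the routine pseudodifferential observation that $\chi_1 \langle D_\tau\rangle^r \chi_2$ is smoothing whenever $\chi_1,\chi_2\in C_{\mathrm{c}}^\infty$ have disjoint supports, for any $r\in \bbR$.
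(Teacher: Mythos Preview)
Your proposal is correct and follows essentially the same route as the paper's proof: localize near each $T\in\calT\setminus\{0\}$ using pseudolocality of $\langle D\rangle^{r}$, invoke the Duistermaat--Guillemin expansion to reduce to the model singularities $(\tau-T-i0)^{-(\delta+1)/2}$, and then compute on the Fourier side (using $\langle\sigma\rangle^{r}\sim|\sigma|^{r}$ at infinity) that the regularized singularity lands in $L^{1+\varepsilon}_{\mathrm{loc}}$, after which \Cref{lem:Sobolev_agreement} gives the $L^{1,\ell}$ conclusion. The only cosmetic differences are that you combine $\langle D\rangle^{\ell}$ and $\langle D\rangle^{1-d}$ into $\langle D\rangle^{\ell+1-d}$ at the outset (whereas the paper keeps them separate and shows membership in $\langle D\rangle^{-(d-\delta/2-1/2-\epsilon)}L^{1+\varepsilon}_{\mathrm{loc}}$ for every small $\epsilon$), and you carry the arithmetic through to the explicit $\nu\in\{0,\tfrac12\}$; both versions rely on the same commutator/pseudolocality bookkeeping that you flag in your ``Main obstacle'' paragraph.
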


\begin{proof} 
	We first show that there exists a function $\epsilon:\bbR^+\to \bbR^+$ with $\lim_{\varepsilon\to 0^+} \epsilon(\varepsilon) = 0$ such that  the regularized half-wave trace $\langle D_\tau \rangle^{1-d} (\tau^{-1} \operatorname{HWT}_{M,g}(\tau))$ satisfies 
	\begin{equation} 
	\langle D_\tau \rangle^{1-d} (\tau^{-1} \operatorname{HWT}_{M,g}(\tau)) \in 
	\langle D\rangle^{-(d-\delta/2-1/2-\epsilon(\varepsilon))} L^{1+\varepsilon}_{\mathrm{loc}}(\bbR \backslash \{0\})
	\label{eq:misc_173}
	\end{equation} 
	for every $\varepsilon >0$. Given this, \cref{eq:nice_Sobolev_goal} follows from \Cref{lem:Sobolev_agreement}. 
	Set $\alpha(\sigma) = \langle \sigma \rangle^{1-d} N_{1/2}(\sigma)$. Since $\operatorname{HWT}_{M,g}(\tau) = - i \tau \calF N_{1/2}$,  \cref{eq:misc_173} is equivalent to 
	\begin{equation}
	\calF\alpha \in 
	\langle D\rangle^{-(d-\delta/2-1/2-\epsilon(\varepsilon))} L^{1+\varepsilon}_{\mathrm{loc}}(\bbR \backslash \{0\}). 
	\end{equation}

	Since $\calT$ is discrete, we may choose a partition of unity $\{\chi_n\}_{n=0}^\infty \subset C_{\mathrm{c}}^\infty(\bbR;[0,1])$ of $\bbR$ such that each $\chi_n$ contains at most one point of $\calT$ in its support and (if it contains such a point) is identically equal to one in some neighborhood of it. We index our partition of unity such that $\chi_0$ is the only element of this partition of unity whose support contains the origin. 
	We can write, for each $n \in \bbN$, 
	\begin{equation}
		\chi_n \calF \alpha = \chi_n \langle D \rangle^{1-d} \calF N_{1/2} = \chi_n \langle D \rangle^{1-d} \chi_n \calF N_{1/2} + \chi_n \langle D \rangle^{1-d} (1-\chi_n) \calF N_{1/2}. 
		\label{eq:k3}
	\end{equation}
	Since $\calF \mu_{1/2}(\tau) = - i \tau \calF N_{1/2}$, by the pseudolocality of $\langle D \rangle^{1-d}$ the second term in (\ref{eq:k3}), $\chi_n \langle D \rangle^{1-d} (1-\chi_n) \calF N_{1/2}$, is smooth. (If $\operatorname{supp} \chi_n \cap \calT= \varnothing$, then from $\operatorname{singsupp}\langle D \rangle^{1-d} (1-\chi_n) \calF N_{1/2} \subset \calT$ we conclude $\smash{\operatorname{singsupp} \chi_n \langle D \rangle^{1-d} (1-\chi_n) \calF N_{1/2} = \varnothing}$. If $\operatorname{supp} \chi_n \cap \calT = \{T\}$,  then $\operatorname{singsupp} (1-\chi_n) \calF N_{1/2} \subset \calT\backslash \{T\}$, so a similar conclusion holds. Alternatively, via the symbol calculus, the essential support of the operator $\chi_n \langle D \rangle^{1-d} (1-\chi_n)$ is disjoint from $\calT$.)
	It remains therefore to show that 
	\begin{equation}
		\chi_n \langle D \rangle^{1-d} \chi_n \calF N_{1/2} \in \langle D\rangle^{-(d-\delta/2-1/2-\epsilon)}L^{1+\varepsilon}_{\mathrm{loc}}(\bbR\backslash \{0\})
		\label{eq:n43}
	\end{equation}
	for each $n\neq 0$. This will hold if $\langle D \rangle^{1-d} \chi_n \calF N_{1/2} \in \langle D\rangle^{-(d-\delta/2-1/2-\epsilon)} L^{1+\varepsilon}_{\mathrm{loc}}(\bbR \backslash \{0\})$ holds.

	If $\chi_n$ contains no points of $\calT$ in its support, then by Chazarain's theorem, $\chi_n \calF\alpha \in C_{\mathrm{c}}^\infty(\bbR)$, so by the pseudolocality of $\langle D \rangle^{1-d}$, (\ref{eq:n43}) holds. Otherwise -- letting $T$ denote the sole point in $\calT \cap \operatorname{supp}\chi_n$ -- by the Duistermaat-Guillemin result, we can write 
	\begin{align}
		\chi_n(\tau) \calF\mu_{1/2}(\tau) &= \chi_n(\tau)\sum_{j=0}^{\lfloor 2^{-1}(\delta-1) \rfloor}  C_j (\tau-T-i0)^{j0(\delta+1)/2}+ E(\tau) 
		\label{eq:kk}
		\intertext{for some $E \in C^\infty_{\mathrm{c}}(\bbR)$, depending on $n$. Since $\chi_n$ is identically equal to one in some neighborhood of $T$, \cref{eq:kk} actually implies that  } 
		\chi_n(\tau) \calF\mu_{1/2}(\tau)  &= \sum_{j=0}^{\lfloor 2^{-1}(\delta-1) \rfloor} C_j(\tau-T-i0)^{j-(\delta+1)/2}  + F(\tau) 
		\label{eq:11}
	\end{align}
	for some  $F \in C_{\mathrm{c}}^\infty(\bbR)$. 
	Applying the pseudodifferential operator $\langle D\rangle^{1-d}$ to both sides \cref{eq:11}, 
	\begin{equation} \langle D \rangle^{1-d} \chi_n \calF \mu_{1/2}(\tau) = \sum_{j=0}^{\lfloor 2^{-1}(\delta-1) \rfloor}C_j \langle D \rangle^{1-d}(\tau-T-i0)^{j-(\delta+1)/2}+ \langle D \rangle^{1-d} F(\tau).
	\end{equation} 
	By the mapping properties of $\Psi$DOs (or equivalently pseudolocality), $\smash{\langle D \rangle^{1-d} F(\tau) \in C^\infty(\bbR)}$. It therefore suffices to observe that 
	\begin{equation} 
	\langle D \rangle^{1-d} (\tau-T-i0)^{j-(\delta+1)/2} \in \langle D\rangle^{-(d-\delta/2-1/2-\epsilon)} L^{1+\varepsilon}_{\mathrm{loc}}(\bbR )
	\end{equation} 
	holds, for each $j=0,\ldots,\lfloor 2^{-1}(\delta-1) \rfloor$, which is equivalent to the inclusion $(\tau-T-i0)^{j-(\delta+1)/2} \in \langle D\rangle^{\delta/2-1/2+\epsilon} L^{1}_{\mathrm{loc}}(\bbR )$ holding for each $j$. 
	By the translation invariance of $\langle D \rangle$ and its fractional powers, it suffices to show that $\langle D \rangle^{1-d} (\tau-i0)^{j-(\delta+1)/2} \in \langle D\rangle^{-(d-\delta/2-1/2-\epsilon)} L^{1+\varepsilon}_{\mathrm{loc}}(\bbR )$ holds, for each $j=0,\ldots,\lfloor 2^{-1}(\delta-1)\rfloor$.  
	Via the Taylor series expansion of $\langle \sigma \rangle^{-\ell}$ around $\sigma=\infty$, we see that $ \langle D\rangle^{-(\delta/2-1/2+\epsilon(\varepsilon))} (\tau-i0)^{j-(\delta+1)/2}= (\tau-i0)^{-1+\epsilon(\varepsilon)/2+j} G(\tau)$ for some continuous $G(\tau)$. This is in $L^{1+\varepsilon}_{\mathrm{loc}}(\bbR)$ for each $j = 0,\ldots,\lfloor 2^{-1}(\delta-1) \rfloor$ if and only if it is for $j=0$. This holds if $(1-\epsilon/2)(1+\varepsilon)<1$, so we can find $\epsilon(\varepsilon)$ with the desired properties. 
\end{proof}

Finally, \Cref{thm:main}, restated here as a proposition:
\begin{proposition}
	Suppose that $(M,g)$ is a compact Riemannian manifold such that the regularized half-wave trace $\tau^{-1} \langle D_\tau \rangle^{1-d} (\tau^{-1} \operatorname{HWT}_{M,g}(\tau))$ is in $L_{\mathrm{loc}}^{1,\ell}(\bbR\backslash \{0\})$ for some $\ell\in \bbN^+$. 
	
	Then, for some polynomial $Z_0(\sigma) \in \bbC[\sigma]$ of degree at most $d-2$, 
	\begin{equation}
	N_{1/2}(\Sigma) = (2\pi)^{-d} \operatorname{Vol}_g(M) \operatorname{Vol}(\bbB^d) \Sigma^d + Z_0(\Sigma)  + O(\Sigma^{d-1} \calR(\Sigma)^{-1/2} )
	\label{eq:misc_987}
	\end{equation}
	as $\Sigma\to\infty$, where $\calR(\Sigma) = \max\{1,\sup\{T>0: T \lVert \tau^{-1} \langle D_\tau \rangle^{1-d} (\tau^{-1} \operatorname{HWT}_{M,g}(\tau)) \rVert_{L^{1,\ell}[1,T]_\tau} \leq \Sigma^\ell \}\}$. 
\end{proposition}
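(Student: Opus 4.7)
The plan is to reduce the proposition to a direct application of \Cref{thm:main_1}. Let $W(\sigma) = (2\pi)^{-d} \operatorname{Vol}_g(M) \operatorname{Vol}(\bbB^d) \sigma^d$. Identify the polynomial $Z(\sigma) = W(\sigma) + Z_0(\sigma)$ of degree $d$ from the principal part of $\calF N_{1/2}(\tau) = -i \tau^{-1} \operatorname{HWT}_{M,g}(\tau)$ at $\tau = 0$: by item (\ref{DG:3}) of the Duistermaat-Guillemin local formula, $\operatorname{HWT}_{M,g}(\tau) = \sum_{j=0}^{d-1} Z_j (\tau - i0)^{-(d-j)} \bmod C^\infty$ near $0$, with $Z_1 = Z_3 = \cdots = 0$. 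Dividing by $\tau$ raises every pole order by one, and \Cref{prop:pole_calc} then determines a unique polynomial $Z$ of degree $d$ whose Fourier transform (against $\Theta$) matches the resulting principal part. The parity vanishing forces the $\sigma^{d-1}$-coefficient of $Z$ to vanish, leaving $\deg Z_0 \leq d - 2$.

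Next, select a bump $E \in C_{\mathrm{c}}^\infty((0,\infty))$ arranging that $a(\tau) := \calF[\langle \sigma \rangle^{1-d} (N_{1/2} - Z + E) \Theta](\tau)$ vanishes to order $\ell + 1$ at $\tau = 0$. This is possible by a finite-dimensional moment argument: after subtracting $Z$, the Fourier transform is already smooth near $\tau = 0$ (the principal singularity has been cancelled), and the linear map $E \mapsto \bigl( \int_0^\infty \sigma^k \langle \sigma \rangle^{1-d} E(\sigma) \dd \sigma \bigr)_{k=0}^\ell$ from $C_{\mathrm{c}}^\infty((0,\infty))$ to $\bbC^{\ell+1}$ is surjective, as $\ell + 1$ bumps supported at distinct points realize linearly independent moment vectors (a Vandermonde-type argument). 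With $\alpha = \langle \sigma \rangle^{1-d} (N_{1/2} - Z + E) \Theta$ fixed, hypothesis (\ref{it:taub1}) of \Cref{thm:main_1}, $\alpha \in L^\infty(\bbR)$, is the classical H\"ormander-Levitan-Avakumovic Weyl remainder $N_{1/2}(\sigma) - W(\sigma) = O(\sigma^{d-1})$. Hypothesis (\ref{it:ab1}), $a \in \tau L^{1,\ell}_{\mathrm{loc}}(\bbR)$, follows because, away from $\{0\}$, $a(\tau)$ differs from $-i \langle D_\tau \rangle^{1-d} (\tau^{-1} \operatorname{HWT}_{M,g}(\tau))$ by a function with singular support contained in $\{0\}$ (both $\calF[\langle \sigma \rangle^{1-d} Z \Theta]$ and $\calF[\langle \sigma \rangle^{1-d} E \Theta]$ are smooth off the origin), whence $\tau^{-1} a \in L^{1,\ell}_{\mathrm{loc}}(\bbR \backslash \{0\})$ by the standing hypothesis, while near $\tau = 0$ the arranged vanishing of $a$ places $\tau^{-1} a$ in $L^{1,\ell}_{\mathrm{loc}}$ at the origin as well. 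Discreteness of $\operatorname{singsupp} a$ is Chazarain's theorem.

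\Cref{thm:main_1} then delivers $N_{1/2}(\Sigma) = W(\Sigma) + Z_0(\Sigma) + O(\Sigma^{d-1} \calR_0(\Sigma)^{-1/2})$, where $\calR_0$ is the rate function attached to $\tau^{-1} a$. It remains to identify $\calR_0(\Sigma)$ with the $\calR(\Sigma)$ of the present statement (defined in terms of $\tau^{-1} \langle D_\tau \rangle^{1-d} (\tau^{-1} \operatorname{HWT}_{M,g}(\tau))$ on $[1,T]$): the two $L^{1,\ell}$-norms differ by smooth polynomially-bounded corrections (from the subtractions of $Z$ and $E$, restricted to where $\tau$ is bounded away from $0$) and by the distinction between $[-T,+T]$ and $[1,T]$, reconciled by the Hermitian symmetry $\operatorname{HWT}_{M,g}(-\tau) = \overline{\operatorname{HWT}_{M,g}(\tau)}$ together with boundedness of any contribution from the fixed-size interval $[-1,+1]$. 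Hence $\calR_0 \asymp \calR$ asymptotically, yielding \cref{eq:misc_987}.

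The main obstacle I anticipate is the moment-argument selection of $E$, which must simultaneously cancel the first $\ell + 1$ Taylor coefficients of $a$ at $\tau = 0$ using only bumps supported away from $\sigma = 0$; this is a small but easily-glossed linear-algebra verification. Beyond this, the only remaining care is the $\calR_0 \asymp \calR$ comparison, which is routine bookkeeping because polynomial corrections on $[1, T]$ are dominated by the (at least polynomially growing, whenever $\calR(T)$ is finite) singular contribution of the half-wave trace itself.
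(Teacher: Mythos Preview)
Your proposal is correct and follows essentially the same route as the paper's own proof: choose $Z$ from the Laurent expansion of $\calF N_{1/2}$ at $\tau=0$, choose $E$ to fix the behavior of $a$ at the origin, invoke \Cref{thm:main_1}, and compare the resulting rate function with $\calR$. Two small remarks: you arrange $a$ to vanish to order $\ell+1$ at $\tau=0$, but first-order vanishing already suffices (then $\tau^{-1}a$ is smooth near $0$, hence in $L^{1,\ell}_{\mathrm{loc}}$ there), so the moment argument reduces to a single linear condition on $E$; and for the final comparison the paper absorbs the bounded corrections from $Z$ and $E$ by invoking the $\Lambda$-freedom in the restatement of \Cref{thm:main_1} as \Cref{prop:main_1}, which is a bit cleaner than arguing $\calR_0\asymp\calR$ directly, though your approach also goes through once one notes $\calR(c\Sigma)\geq c^{\ell}\calR(\Sigma)$.
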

\begin{proof}
	We can choose a $Z\in \bbC[\sigma]$, $E\in C_{\mathrm{c}}^\infty(\bbR^+)$ such that, setting 
	\begin{equation} 
	\alpha(\sigma) = 
	\begin{cases}
		0 & (\sigma< 0) \\
		\langle \sigma \rangle^{1-d} (N_{1/2}(\sigma) - Z( \sigma) + E(\sigma) ) & (\sigma\geq 0),
	\end{cases}
	\end{equation} 
	$a=\calF \alpha$ is smooth in some neighborhood of the origin and vanishing to first order.  Moreover, the leading term of $Z$ is $(2\pi)^{-d}\operatorname{Vol}_g(M) \operatorname{Vol}(\bbB^d) \sigma^d$ (by the same computation as in the proof of Weyl's law). From the absence of half of the terms in \Cref{DG:3} (note the different usage of `$Z$') -- see also \cite[Equation 2.4]{DG} -- we see that the subleading part $Z_0$ of $Z$ has degree at most $d-2$. 
	
	We now apply \Cref{prop:main_1}, the result being 
	\begin{equation}
	N_{1/2}(\Sigma)=(2\pi)^{-d} \operatorname{Vol}_g(M) \operatorname{Vol}(\bbB^d) \Sigma^d + Z_0(\Sigma)  + O(\Sigma^{d-1} \calR_{0,\Lambda}(\Sigma)^{-1/2} )
	\label{eq:misc_999}
	\end{equation}
	for $\calR_{0,\Lambda}(\Sigma) = \sup\{T>1: T \lVert t^{-1} a(t) \rVert_{L^{1,\ell}[-T,+T]} \leq \Lambda \Sigma^\ell \}$ and $\Lambda>0$ arbitrary. Away from the origin 
	\begin{equation}
	a(\tau) = i\langle D _\tau \rangle^{1-d} ( \tau^{-1} \operatorname{HWT}_{M,g}(\tau))  + \langle D_\tau \rangle^{1-d}\calF E(\tau).
	\end{equation}
	As noted above, $\langle D_\tau \rangle^{1-d}\calF E(\tau)$ is Schwartz. Since the half-wave trace satisfies $\operatorname{HWT}_{M,g}(-\tau) = \operatorname{HWT}_{M,g}(\tau)^*$, we can write 
	\begin{equation}
	\lVert t^{-1} a(t) \rVert_{L^{1,\ell}[-T,+T]} = \lVert t^{-1} a(t) \rVert_{L^{1,\ell}[-1,+1]} + 2\lVert t^{-1} a(t) \rVert_{L^{1,\ell}[1,T]}
	\end{equation}
	for $T\geq 1$. Also, 
	\begin{multline}
	\lVert \tau^{-1} a(\tau) \rVert_{L^{1,\ell}[1,T]} \leq \lVert \tau^{-1} \langle D _\tau \rangle^{1-d} ( \tau^{-1} \operatorname{HWT}_{M,g}(\tau)) \rVert_{L^{1,\ell}[1,T]} \\  + \lVert \tau^{-1} \langle D_\tau \rangle^{1-d}\calF E(\tau) \rVert_{L^{1,\ell}[1,\infty)}.
	\end{multline}
	Note that $a$ cannot be identically zero on $[1,\infty]$, so for some $T_0>0$ there exists some $c>0$ such that $\lVert \tau^{-1} a(\tau) \rVert_{L^{1,\ell}[-1,+1]} ,\lVert \tau^{-1} \langle D_\tau \rangle^{1-d}\calF E(\tau) \rVert_{L^{1,\ell}[1,\infty)} \leq c \lVert \tau^{-1} \langle D _\tau \rangle^{1-d} ( \tau^{-1} \operatorname{HWT}_{M,g}(\tau)) \rVert_{L^{1,\ell}[1,T]}$ for $T\geq T_0$.

	For $\Lambda>2+3c$, 
	\begin{align}
	\calR_{0,\Lambda}(\Sigma)  &\geq \sup\{1,T\geq T_0: T \lVert \tau^{-1}  \langle D _\tau \rangle^{1-d} ( \tau^{-1} \operatorname{HWT}_{M,g}(\tau))  \rVert_{L^{1,\ell}[1,T]} \leq (2+3c)^{-1}\Lambda \Sigma^\ell \} \\
	&\geq \sup\{1,T>0: T \lVert \tau^{-1}  \langle D _\tau \rangle^{1-d} ( \tau^{-1} \operatorname{HWT}_{M,g}(\tau))  \rVert_{L^{1,\ell}[1,T]} \leq  \Sigma^\ell \} = \calR(\Sigma). 
	\end{align}
	Substituting this into \cref{eq:misc_999} yields \cref{eq:misc_987}. 
\end{proof}

With \Cref{prop:apriori}, this yields \Cref{thm:final}. 

\section*{Acknowledgements}

This work was partially supported by a Hertz fellowship. Thanks go to  Gregory Debruyne, Elliott Fairchild, Peter Hintz, and Richard Melrose for helpful comments or conversations.

\printbibliography
	
\end{document}